\newcommand{\Z}{\mathbb{Z}}
\newcommand{\C}{\mathbb{C}}
\newcommand{\Q}{\mathbb{Q}}
\newcommand{\G}{\mathbb{G}}
\newcommand{\Hom}{\text{Hom}}
\newcommand{\Spec}{\text{Spec}}
\newcommand{\dep}[1]{\text{dep}(#1)}
\newcommand{\F}{\mathbb{F}}
\newcommand{\ppartial}[1]{\partial^{(p^n)}}
\newcommand{\powerseries}[1]{[\![#1]\!]}
\newcommand{\ipartial}[1]{\partial^{(#1)}}
\newcommand{\dlog}[1]{\textnormal{dLog}^{(#1)}}
\newcommand{\AHp}{\overline{E}_p}
\newcommand{\LS}[1]{(\!(#1)\!)}
\newcommand{\fracfield}[1]{\textnormal{Frac}(#1)}
\newcommand{\ID}{\textnormal{ID}}
\newcommand{\IF}{\textnormal{IF}}
\newcommand{\cat}{\textnormal{cat}}
\newcommand{\Gal}{\textnormal{Gal}}
\newtheorem{theorem}{Theorem}[section]
\newtheorem{lemma}[theorem]{Lemma}
\newtheorem{proposition}[theorem]{Proposition}
\newtheorem{corollary}[theorem]{Corollary}
\theoremstyle{definition}
\newtheorem{remark}[theorem]{Remark}
\newtheorem{definition}[theorem]{Definition}
\def\frak{\relaxnext@\ifmmode\let\next\frak@\else
 \def\next{\Err@{Use \string\frak\space only in math mode}}\fi\next}
\def\goth{\relaxnext@\ifmmode\let\next\frak@\else
 \def\next{\Err@{Use \string\goth\space only in math mode}}\fi\next}
\def\frak@#1{{\frak@@{#1}}}
\def\frak@@#1{\noaccents@\fam\euffam#1}
\font\tengoth=eufm10
\def\goth{\fam\gothfam\tengoth} \textfont\gothfam=\tengoth
\title{Transcendence properties of the Artin-Hasse exponential modulo $p$}
\author{Joe Kramer-Miller}
\date{}
\begin{document}
	\maketitle
	\begin{abstract}
		Let $E_p(x)$ denote the Artin-Hasse exponential and let $\AHp(x)$ denote
		its reduction modulo $p$ in $\F_p\powerseries{x}$. 
		In this article we study transcendence properties of $\AHp(x)$ over $\F_p[x]$. 
		We give two proofs that $\AHp(x)$ is transcendental, affirmatively answering
		a question of Thakur. We also prove algebraic independence results:
		i) for $f_1,\dots,f_r \in x\F_p[x]$ satisfying certain linear independence properties, we
		show that the $\AHp(f_1), \dots, \AHp(f_r)$ are algebraically independent over $\F_p[x]$ and ii) we determine the algebraic relations between $\AHp(cx)$, where $c \in \F_p^\times$.
		Our proof studies the higher derivatives of $\AHp(x)$ and makes use of iterative differential Galois theory.
	\end{abstract}
	
	\section{Introduction}
	The Artin-Hasse exponential is defined by
	\begin{align*}
		E_p(x) := \exp\left(\sum_{n=0}^\infty \frac{x^{p^n}}{p^n}\right )
	\end{align*}
	and is known to have coefficients in $\Z_{(p)}$. 
	This series plays an enormous role in $p$-adic analysis, showing up
	in the construction of Witt vectors (see e.g. \cite{Dieudonne-ArtinHasse_exponential}) and the $p$-adic
	approach to exponential sums (see e.g. \cite{Adolphson-Sperber-exponential_sums} and \cite{Wan-NP_zeta_functions}). From the relation $\frac{E_p(x^p)}{E_p(x)^p}=\exp(-xp)$
	it is immediate that $E_p(x)$ is a transcendental function over $\C(x)$.
	However, we may also consider the reduction of $E_p(x)$ modulo $p$, which we denote by $\overline{E}_p(x)$. Then $\overline{E}_p(x)$ is a power series in $\F_p\powerseries{x}$
	that in some sense plays the role of the exponential function in characteristic $p$. 
	For example, one can use $\overline{E}_p(x)$ to translate Witt vector addition into power
	series multiplication as explained in \cite{Dieudonne-ArtinHasse_exponential}. In this article
	we investigate the transcendence properties of $\overline{E}_p(x)$. We give two proofs that $\overline{E}_p(x)$
	is transcendental over $\F_p(x)$. This answers a question posed by Dinesh Thakur (see e.g. \cite{Thakur-automatic_methods_in_transcendence} or the open questions in chapter 12 of \cite{Allouche-Shallit-Automatic_sequences}). We also prove  algebraic independence results for
	the evaluations of $\overline{E}_p(x)$ at different values in $x\overline{\F}_p[x]$.

	\subsection{Main results and outline}
	Our first result is the following:
	\begin{theorem} \label{t: basic transcendence result}
		The series $\AHp(x)$ is transcendental over $\F_p[x]$.
	\end{theorem}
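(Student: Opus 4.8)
The plan is to exploit the first‑order differential equation satisfied by $\AHp$. Logarithmically differentiating $E_p(x)=\exp\bigl(\sum_{n\ge 0}x^{p^n}/p^n\bigr)$ gives $E_p'(x)/E_p(x)=\sum_{n\ge 0}x^{p^n-1}$, and reducing modulo $p$ yields
\[
	\AHp'(x)=\theta(x)\,\AHp(x),\qquad \theta(x):=\sum_{n\ge 0}x^{p^n-1}\in\F_p\powerseries{x}.
\]
The coefficient $\theta$ is algebraic over $\F_p(x)$: putting $g(x):=\sum_{n\ge 0}x^{p^n}$ one has $\theta=g/x$, and $g$ satisfies $g^p-g+x=0$, so in particular $g'=1$. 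Thus the theorem is equivalent to the assertion that $y'=\theta y$ has no solution algebraic over $\F_p(x)$. One should keep in mind why it does have the formal solution $\AHp$: a Lucas‑theorem computation gives $\ipartial{p-1}\theta=\theta^p$, whence the $p$‑curvature of $\tfrac{d}{dx}-\theta$ vanishes; so there is no obstruction to solving formally, and the obstruction to algebraicity must lie in a finer invariant.

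The first step is to remove the part of $\theta$ that is visibly a logarithmic derivative. From $g^p-g+x=0$ one gets $g^{p-1}-1=\prod_{c\in\F_p^\times}(g-c)$, and partial fractions over $\F_p$ give $\theta=g/x=(1-g^{p-1})^{-1}=\sum_{c\in\F_p^\times}\frac{c}{g-c}$; since $g'=1$, this is exactly $h'/h$ for the algebraic function $h:=\prod_{c=1}^{p-1}(g-c)^{\,c}$. Hence $(\AHp/h)'=0$, and because $\ker(d/dx)$ on $\F_p\powerseries{x}$ equals $\F_p\powerseries{x^p}$, we obtain $\AHp=h\cdot F^p$ for a unique $F\in\F_p\powerseries{x}$. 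As $h$ is algebraic over $\F_p(x)$, the series $\AHp$ is algebraic over $\F_p(x)$ if and only if $F$ is, so it would suffice to prove $F$ transcendental.

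This reduction by itself cannot finish the job: the identity $\AHp=h\cdot F^p$ holds for the power series $F$ whether or not $F$ is algebraic, and—consistently with the vanishing of the $p$‑curvature—nothing at the level of the ordinary derivative distinguishes the two cases. The leverage must come from the higher Hasse derivatives $\ipartial{p^n}\AHp$. Since $\AHp$ is a unit in $\F_p\powerseries{x}$ we may write $\ipartial{p^n}\AHp=\theta_n\AHp$ with $\theta_n\in\F_p\powerseries{x}$ (so $\theta_0=1$, $\theta_1=\theta$), and compute the $\theta_n$ recursively by applying $\ipartial{p^n}$ to the equation for $\AHp$ together with the iterated Leibniz rule. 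If $\AHp$ were algebraic, all $\theta_n$ would be algebraic over $\F_p(x)$, and these relations would exhibit $\AHp$ as a horizontal section of a rank‑one iterative differential module $M$ (over a finite extension of $\F_p(x)$); algebraicity of $\AHp$ is then equivalent to $M$ being trivialized by a finite extension, i.e.\ to the iterative differential Galois group of $M$ being finite, equivalently—via Christol's theorem—to the $p$‑kernel of the coefficient sequence of $\AHp$ being finite.

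The main obstacle is this last point: one must show the iterative differential Galois group of $M$ is all of $\G_m$, and in particular infinite, so that $\AHp$ is transcendental. I expect to establish this by pinning down the tower $\{\theta_n\}$—equivalently the tower of algebraic covers of $\mathbb{P}^1$ supporting the successive Frobenius descents of $\tfrac{d}{dx}-\theta$—explicitly enough to see that it is not eventually periodic: for instance by tracking the local exponents of these equations at $x=0$, or by computing the $p$‑curvature at each level and checking that no finite cover can carry a solution. This is exactly where the iterative differential Galois theory advertised in the abstract has to do the work, and I would not expect a shortcut around it.
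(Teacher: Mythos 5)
Your setup is fine as far as it goes (the identity $\theta=\sum_{c\in\F_p^\times}\frac{c}{g-c}=\dlog{1}(h)$ with $h=\prod_c(g-c)^c$ is correct, as is the observation that vanishing $p$-curvature makes the first-order equation useless), but the proof has a genuine gap exactly where you say "I expect to establish this by\dots": the entire content of the theorem is the claim that the tower $\theta_n=\dlog{p^n}(\AHp)$ (equivalently, all the $\dlog{k}(\AHp)$) cannot live in a single finite extension of $\F_p(x)$, i.e.\ that the iterative differential Galois group is all of $\G_m$ rather than finite. You give no mechanism for proving this; "tracking local exponents" or "computing the $p$-curvature at each level and checking no finite cover carries a solution" is not an argument, and your reduction $\AHp=hF^p$ gives no explicit handle on the higher $\theta_n$ (it only repackages the vanishing of the first $p$-curvature). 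Note also that Christol's theorem concerns the coefficient sequence of $\AHp$ itself, not the sequence $\{\theta_n\}$, so "not eventually periodic" is not even the right formal target for the tower.

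What the paper actually supplies at this point is a concrete computation and a linear-independence theorem: it shows $\dlog{k}(\AHp(x))\equiv \overline{\omega}_x^k/x^k \bmod \overline{M}_{k-1}$, where $\overline{\omega}_x^k$ is the solution of the depth-$k$ iterated Frobenius system $y_1^p-y_1=x,\ y_2^p-y_2=xy_1,\dots$, and proves (by an Artin--Schreier argument: $y^p-y=f$ with $\deg f$ prime to $p$ has no solution in $\F_q(x)$) that the solutions of such systems of distinct depths are linearly independent over $\F_p(x)$. Hence the $\dlog{k}(\AHp)$ are infinitely many $\F_p(x)$-linearly independent elements, which cannot all lie in one finite (separable) extension to which the higher derivations extend; transcendence follows, and for this basic statement no ID-Galois theory is needed at all (it is reserved for the algebraic independence results). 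The paper also has a second, more elementary route you did not consider: for $p\geq 5$, the functional equation $\AHp(cx)=\AHp(x)^{[c]}$ plus the fact that $f^\lambda$ is transcendental for algebraic $f\in 1+x\F_p\powerseries{x}$ and irrational $\lambda\in\Z_p$ (via Christol) gives a short proof. As it stands, your proposal is a plausible program, not a proof.
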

	We give two proofs Theorem \ref{t: basic transcendence result}. The first
	proof is given in \S \ref{s: an automatic approach to the transcendence}. It uses
	Christol's automatic criterion for algebraicity of power series together with
	the functional equation: for $c \in \F_p^\times$ we have $\AHp(cx)=\AHp(x)^{[c]}$ for $c \in \F_p^\times$ 
	where $[c]$ denotes the Teichmuller lift of $c$. This proof is relatively elementary and short, but has the disadvantage of requiring $p$ to be at least $5$. The second proof 
	uses iterative differential Galois theory and iterative Frobenius equations. These
	are the same methods we use for the stronger algebraic independence results, which we
	describe in detail below.

	To state our first algebraic independence result we need the following definition.
	\begin{definition}
		Let $f(x)=\sum c_i x^i \in \overline{\F}_p[x]$. We define the \emph{primitive}
		part of $f$ to be
		\begin{align*}
		f^{*}(x) := \sum_{\substack{i \geq 0 \\ p\nmid i}} c^{*}_i(f) x^i,\\
		c^{*}_i(f):= \sum_{k\geq 0} c_{ip^k}^{1/p^k}.
		\end{align*}
	\end{definition}
	\noindent Note that $f^{*}(x)=0$ if and only if $f(x)=g(x)^p - g(x)$ for some $g(x) \in \F_p[x]$.
	\begin{theorem}
		\label{t: algebraic independence result}
		Let $f_1,\dots, f_r \in x\overline{\F}_p[x]$ be nonzero. Assume that one of the two conditions holds:
		\begin{enumerate}
			\item The elements $f_1^{*}, \dots, f_r^{*}$ are linearly independent over
			$\F_p$. 
			\item The $f_i^{*}$'s are not all zero and no non-trivial power product of the $f_i's$ are contained in $\F_p(x)^p$. In other words, if $f_1^{n_1}\dots f_r^{n_r}$ is a $p$-th power then $n_1,\dots,n_r \in p\Z$. 
		\end{enumerate}
		Then $\overline{E}_p(f_1), \dots, \overline{E}_p(f_r)$ are algebraically independent.
		\end{theorem}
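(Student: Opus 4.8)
The plan is to reduce algebraic independence of $\overline{E}_p(f_1),\dots,\overline{E}_p(f_r)$ to a statement about the iterative differential Galois group of the iterative differential module they generate, using the fact that each $y_i = \overline{E}_p(f_i)$ satisfies an iterative Frobenius-type equation. The starting point is the functional equation for the Artin-Hasse exponential: since $E_p(x) = \exp\!\big(\sum_{n\ge 0} x^{p^n}/p^n\big)$, one has $\dlog{}(E_p)(x) = \sum_{n\ge 0} x^{p^n-1}$, and reducing modulo $p$ shows that $\overline{E}_p$ satisfies a first-order iterative differential equation whose ``$p$-curvature data'' at every level is governed by the geometric series $\sum_{n\ge 0} x^{p^n-1}$. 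Pulling back along $f_i$, the function $y_i = \overline{E}_p(f_i)$ satisfies $\partial^{(1)} y_i / y_i = f_i' \cdot \sum_{n\ge 0} f_i^{\,p^n-1}$ together with the higher iterative analogues; the primitive part $f_i^{*}$ is precisely the invariant that records the nonvanishing ``logarithmic derivative class'' of $\overline{E}_p(f_i)$ modulo $p$-th powers, which is why it appears in the hypotheses.

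First I would set up the iterative differential module $M$ over $\F_p(x)$ (or a suitable separable closure / perfection) generated by the $y_i$ and all their iterative derivatives, and identify its iterative differential Galois group $G$ as a subgroup of $\mathbb{G}_m^r$ — this is forced because each $y_i$ satisfies a rank-one iterative Frobenius equation, so $G$ is diagonalizable and the character lattice of $G$ is a sublattice $\Lambda \subseteq \Z^r$. The key translation is then: the transcendence degree of $\F_p(x)(y_1,\dots,y_r)$ over $\F_p(x)$ equals $\dim G = r - \operatorname{rank}\Lambda$, so algebraic independence is equivalent to $\Lambda = 0$, i.e.\ to the assertion that no nontrivial power product $y_1^{n_1}\cdots y_r^{n_r}$ is algebraic over $\F_p(x)$.

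Next I would show that a power product $\prod y_i^{n_i}$ being algebraic over $\F_p(x)$ forces, via the iterative Frobenius equations and a descent through all the levels $p^k$, that $\prod \overline{E}_p(f_i)^{n_i}$ is actually a rational function — in fact one must be able to cancel the ``$\dlog$'' at every level, which amounts to the linear combination $\sum_i n_i f_i'\big(\sum_{k} f_i^{\,p^k-1}\big)$ being a logarithmic derivative of a rational function, and then, level by level, that $\sum_i n_i f_i^{*} = 0$ in $\F_p[x]$ after the standard twisting/Frobenius-untwisting that defines $c_i^{*}$. Under hypothesis (1) this immediately gives all $n_i = 0$. Under hypothesis (2) I would argue instead that $\prod \overline{E}_p(f_i)^{n_i} \in \F_p(x)$ combined with $\overline{E}_p(g)$ being a unit power series (constant term $1$) forces $\prod \overline{E}_p(f_i)^{n_i}$ to be a $p$-power-free unit that can only be rational if $\sum n_i f_i$ is of the form $g^p - g$; writing $n_i = p m_i + s_i$ and peeling off $p$-th powers, the hypothesis that no nontrivial power product lies in $\F_p(x)^p$ lets one conclude inductively that all $n_i \in p\Z$, hence (by infinite descent, or by the $p$-power-free normalization) all $n_i = 0$.

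The main obstacle I expect is the careful bookkeeping in the ``descent through all levels $p^k$'' step: one must show that algebraicity of the power product is detected entirely by the finite datum $\sum_i n_i f_i^{*}$ (resp.\ by the $p$-divisibility of the $n_i$), and this requires knowing that the iterative differential Galois group is \emph{reduced} to exactly the obstruction coming from the primitive parts — equivalently, that there are no ``hidden'' algebraic relations at higher Frobenius level beyond those visible in $f_i^{*}$. Making this precise will use the structure theory of iterative differential modules over fields of characteristic $p$ (the correspondence between such modules, compatible systems of Frobenius descents, and representations of the fundamental group scheme), together with the explicit shape $\dlog{}(\overline{E}_p)(x) = \sum_{n\ge 0} x^{p^n-1}$ which makes every level computable. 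Once the Galois group is pinned down as $\mathbb{G}_m^r / (\text{the subtorus cut out by } \sum n_i f_i^{*}=0)$, the theorem follows from the Galois correspondence for iterative differential fields.
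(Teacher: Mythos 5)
Your first half---realizing each $\AHp(f_i)$ as a rank-one iterative differential module, observing that the Galois group is a closed subgroup of $\G_m^r$ cut out by a character lattice, and reducing algebraic independence to the transcendence of every nontrivial power product $\AHp(f_1)^{n_1}\cdots\AHp(f_r)^{n_r}$ (with not all $n_i$ divisible by $p$)---is exactly the reduction the paper performs. The gap is in the second half. You assert that algebraicity of such a power product can be ``descended through all levels $p^k$'' to force $\sum_i n_i f_i^{*}=0$ (respectively, rationality and a condition of the form $\sum_i n_i f_i = g^p-g$), and you yourself flag this as the main obstacle, to be handled by the general structure theory of iterative differential modules and Frobenius descents. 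But that general theory does not produce this step; it is precisely where the substance of the proof lies. The paper supplies it by an explicit computation: it builds the space $\overline{M}=\bigcup_k \overline{M}_k$ spanned by iterated Frobenius solutions $\overline{w}_b$, proves these are linearly independent over $\F_q(x)$, establishes a splicing formula for products modulo lower depth, and then shows that $\dlog{k}\bigl[\AHp(f_1)^{n_1}\cdots\AHp(f_r)^{n_r}\bigr]$ lies in $\overline{M}_k$ with $\dot{\omega}_z^k$-coefficient $\bigl(\sum_i n_i c_z^{*}(f_i)\dlog{1}(f_i)\bigr)^k$. Hence, once this quantity is nonzero for a single $z$, the $k$-th logarithmic derivative has depth exactly $k$ for every $k$, the higher logarithmic derivatives cannot all lie in a finite extension of $\F_p(x)$, and transcendence follows. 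Without an argument of this kind, your ``level by level'' cancellation is an assertion, not a proof.

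Moreover, the obstruction you propose is not the right one. The quantity whose nonvanishing must be proved is $\sum_i n_i c_z^{*}(f_i)\dlog{1}(f_i)$, which mixes the primitive parts with the logarithmic derivatives $f_i'/f_i$; it is not simply $\sum_i n_i f_i^{*}$. Under hypothesis (1), vanishing of $\sum_i n_i f_i^{*}$ with integer $n_i$ would only give $p\mid n_i$ in any case, and the paper still needs a pole-order argument (choose an irreducible $g$ dividing $f_1$ with exponent prime to $p$; the term $D_z\dlog{1}(g)$ contributes a simple pole that the $\dlog{1}(h_i)$ cannot cancel) to obtain the nonvanishing. Under hypothesis (2), the additive criterion ``$\sum_i n_i f_i$ of the form $g^p-g$'' is not the relevant invariant, since $\AHp$ does not convert sums into products; the paper instead writes $\sum_i n_i c_z^{*}(f_i)\dlog{1}(f_i)=\dlog{1}\bigl[\prod_i f_i^{\,n_i c_z^{*}(f_i)}\bigr]$ and uses $\ker(\dlog{1})=(\F_p(x)^p)^\times$ together with the hypothesis that no nontrivial power product of the $f_i$ is a $p$-th power. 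Finally, your closing claim that the Galois group is pinned down as the subtorus cut out by $\sum_i n_i f_i^{*}=0$ overshoots what is true or proved: the paper establishes only the sufficiency of its two hypotheses and explicitly leaves the complete determination of the algebraic relations open.
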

		Our second algebraic independence result is the following. 
		\begin{theorem}
			\label{t: automata result}
			We have
			\begin{align*}
			\dim_{\F_p(x)} \F_p(x)(\overline{E}_p(cx))_{c \in \F_p^\times} &= \phi(p-1), 
		\end{align*}
		where $\phi$ is the Euler totient function.
		\end{theorem}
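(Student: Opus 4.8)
The plan is to use the functional equation $\overline E_p(cx)=\overline E_p(x)^{[c]}$ to recast the problem as a question about algebraic relations among $p$-adic powers of the single series $\overline E_p(x)$, and then to read off the dimension from iterative differential Galois theory.

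\textbf{Setup.} Write $u=\overline E_p(x)$ and $\Lambda=\sum_{c\in\F_p^\times}\Z\,[c]\subseteq\Z_p$. Since $[c]\,[c']=[cc']$, the $\Z$-module $\Lambda$ is in fact the subring $\Z[\mu_{p-1}]$ of $\Z_p$; if $c_0$ is a primitive root modulo $p$ then $[c_0]$ is a primitive $(p-1)$-st root of unity, so $\Lambda=\Z[[c_0]]\cong\Z[X]/\Phi_{p-1}(X)$ is a free $\Z$-module of rank $d:=\phi(p-1)$. Fixing a $\Z$-basis $\lambda_1,\dots,\lambda_d$ of $\Lambda$ and using $u^{a+a'}=u^au^{a'}$, each $\overline E_p(cx)=u^{[c]}$ is a $\Z$-monomial in $u^{\lambda_1},\dots,u^{\lambda_d}$ and each $u^{\lambda_i}$ is a $\Z$-monomial in the $\overline E_p(cx)$, so
\[
\F_p(x)\bigl(\overline E_p(cx)\bigr)_{c\in\F_p^\times}=\F_p(x)\bigl(u^{\lambda_1},\dots,u^{\lambda_d}\bigr),
\]
a field generated over $\F_p(x)$ by $d$ elements; hence $\dim_{\F_p(x)}\le\phi(p-1)$, and it remains to prove the reverse inequality.

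\textbf{Iterative differential Galois theory.} After the harmless base change to $\overline{\F}_p(x)$ (under which the transcendence degree in question does not change), I would exhibit each $\overline E_p(cx)$ as a solution of a rank-one iterative differential module $M_c$ over a common iterative differential field $F$ that is algebraic over $\overline{\F}_p(x)$; the connection data of $M_c$ are built from the higher logarithmic derivatives $\partial^{(p^k)}\overline E_p/\overline E_p$, which one shows are algebraic over $\F_p(x)$ — for instance $\partial^{(1)}\overline E_p/\overline E_p=\overline g=\sum_{n\ge0}x^{p^n-1}$ satisfies $x^{p-1}\overline g^{\,p}-\overline g+1=0$, and is separable of degree $p$ over $\F_p(x)$. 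The compositum $L=F\bigl(\overline E_p(cx)\bigr)_c$ is then an iterative Picard–Vessiot extension whose iterative differential Galois group $G$ embeds in $\G_m^{\,p-1}$, each $M_c$ being of rank one. By the iterative Galois correspondence $\mathrm{trdeg}(L/F)=\dim G$, and a character $(n_c)\in\Z^{p-1}$ of $\G_m^{\,p-1}$ is trivial on $G$ exactly when $\prod_c\overline E_p(cx)^{n_c}\in F$, i.e.\ when $u^{\sum_c n_c[c]}\in F$. Granting the transcendence statement below (so that $u^a\notin F$ for $a\ne 0$), the characters trivial on $G$ form precisely the lattice $\ker\bigl(\Z^{p-1}\to\Lambda,\ e_c\mapsto[c]\bigr)$, of rank $(p-1)-d$; hence $\dim G=(p-1)-\bigl((p-1)-d\bigr)=d$. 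Since $F/\overline{\F}_p(x)$ is algebraic, $\mathrm{trdeg}(L/\overline{\F}_p(x))=\mathrm{trdeg}(L/F)=d$, and since $L$ is algebraic over $\overline{\F}_p(x)\bigl(u^{\lambda_i}\bigr)_i$, that latter field has transcendence degree $d$ over $\overline{\F}_p(x)$; therefore $\dim_{\F_p(x)}=d=\phi(p-1)$.

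\textbf{The main obstacle.} It remains — and this is the technical heart — to prove that $\overline E_p(x)^a$ is transcendental over $\F_p(x)$ for every $a\in\Z_p\setminus\{0\}$. Writing $a=p^m a'$ with $a'\in\Z_p^\times$ and using $\overline E_p(x)^{p^m a'}=\bigl(\overline E_p(x)^{a'}\bigr)^{p^m}$, together with the fact that extracting a $p^m$-th root preserves algebraicity over $\F_p(x)$, one reduces to $a\in\Z_p^\times$. The device here is the iterative Frobenius equation $\overline E_p(x^{p^k})=\overline E_p(x)^{p^k}$, which yields the recursion
\[
\overline E_p(x)^a=\overline E_p(x)^{a_0}\cdot\phi\bigl(\overline E_p(x)^{(a-a_0)/p}\bigr),\qquad a_0=a\bmod p,
\]
with $\phi$ the $p$-power Frobenius; iterating it expresses the Cartier-operator orbit of $\overline E_p(x)^a$ through the fixed algebraic functions $\overline E_p(x)^{a_0}$ and the successive $\overline E_p(x)^{(a-a_0)/p^j}$. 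The plan is to combine this with Christol's criterion, the fact that $\overline g$ is separable of degree $p$ over $\F_p(x)$, and Theorem~\ref{t: basic transcendence result}, to show that a finite $p$-kernel for $\overline E_p(x)^a$ would propagate down the recursion and ultimately force algebraicity of $\overline E_p(x)$ itself — equivalently, in iterative-Galois language, that the rank-one module attached to $\overline E_p(x)^a$ is non-finite whenever $a\ne 0$. Making this propagation rigorous is exactly where the iterative-differential and iterative-Frobenius methods developed for Theorems~\ref{t: basic transcendence result} and \ref{t: algebraic independence result} are needed.
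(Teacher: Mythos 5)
Your upper bound and your Galois-theoretic bookkeeping are sound and essentially reproduce the paper's route: the functional equation $\AHp(cx)=\AHp(x)^{[c]}$ (Lemma \ref{l: functional equation}) turns everything into $p$-adic powers of $u=\AHp(x)$, the field is generated by $\phi(p-1)$ such powers, and the $\ID$-Galois group sits inside a torus whose dimension is detected by which monomials $\prod_c\AHp(cx)^{n_c}=u^{\sum_c n_c[c]}$ are algebraic; the paper does the same with the chosen generators $\AHp(\xi^i x)$, $0\le i\le \phi(p-1)-1$, instead of your character-lattice count over all $p-1$ generators. The problem is that the whole argument hinges on the statement you explicitly defer: $u^{a}$ is transcendental over $\F_p(x)$ for every nonzero $a\in\Z_p$ (after your reduction, for $a\in\Z_p^\times$), and what you give for it is a plan, not a proof. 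That statement is precisely the technical heart which the paper supplies via Lemma \ref{l: partials of powers of AH} --- the depth estimate $\dlog{k}(\AHp(x)^{a})\equiv a^{k}\,\overline{\omega}^k_x/x^k \bmod \overline{M}_{k-1}$, valid for $p$-adic $a$ because $\ipartial{k}$ kills $p^m$-th powers for $m>v_p(k)$ so $a$ may be replaced by an integer truncation --- combined with the depth criterion of Corollary \ref{c: diff criterion using depths}, which in turn rests on the linear independence of the $\IF$-solutions (Proposition \ref{p: linear independence of the w's mod p}). None of this machinery is invoked, let alone replaced, in your proposal.

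Moreover, the alternative route you sketch is unlikely to close the gap as stated. The recursion $\AHp(x)^a=\AHp(x)^{a_0}\cdot\bigl(\AHp(x)^{(a-a_0)/p}\bigr)^p$ is a tautology and does not visibly constrain the $p$-kernel of $\AHp(x)^a$; the Christol-type lemma in the paper (and the theorem of \cite{Allouche-MendesFrance-vanderPoorten}) gives transcendence of $f^{\lambda}$ for irrational $\lambda$ only when the base $f$ is \emph{algebraic}, which is the opposite of your situation, and knowing that $\AHp(x)$ itself is transcendental (Theorem \ref{t: basic transcendence result}) only rules out rational exponents (if $u^{s/t}$ were algebraic, so would $u$ be); for irrational $a\in\Z_p^\times$ it gives nothing. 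The paper's own remark after its automatic proof makes the point: automatic methods yield at best the weaker bound $\phi(p-1)-1$ (and require $p\ge 5$), so the exact equality genuinely needs the iterative-derivative computation. To complete your proof you would have to either carry out the analysis of $\dlog{k}(u^{a})$ as in Lemma \ref{l: partials of powers of AH} or produce a genuinely new argument; as written, the central transcendence claim is unproven and the proof has a real gap.
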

		The proofs of Theorem \ref{t: algebraic independence result} and Theorem
		\ref{t: automata result} follow the same strategy. This strategy can be broken into
		three steps, which we outline below. For simplicity, we restrict our attention
		to the case where the $f_i$ in Theorem \ref{t: algebraic independence result}
		have coefficients in $\F_p$.
		
		\paragraph{Iterated Frobenius equations.}
		Let $\sigma$ be the lifted Frobenius endomorphism of $\Z_p\powerseries{x}$ that fixes $\Z_p$ and sends $x$ to $x^p$. 
		An iterated Frobenius (abbreviated $\IF$) equation with depth $k$ is the system of equations: 
		\[y_1^\sigma - y_1 = x^{b_1}, ~~ y_2^\sigma - y_2 = x^{b_2}y_1, ~~  \dots,~~y_k^\sigma - y_k=x^{b_k}y_{k-1}.\]
		where the $b_i \geq 1$. We say an $\IF$-equation is primitive if each $b_i$ is prime to $p$. 
		Let $M_k$ denote the $\Z_p(x)$-module space generated by all solutions to all $\IF$-equations of depth at most $k$ and let $M=\cup M_k$. In \S \ref{s: solutions to iterated frobenius equations} we study $M$. We prove the following:
		\begin{enumerate}
			\item The solutions to primitive $\IF$-equations form a basis of $M$ over $\Z_p(x)$.
			\item The $\Z_p(x)$-module $M$ is closed under multiplication and $M_{k_1}M_{k_2} \subset M_{k_1+k_2}$. 
		\end{enumerate}
		
		\paragraph{Higher derivatives}
		Let $\ipartial{k}$ denote the differential operator $\frac{1}{k!}\frac{d^k}{d^kx}$
		and let $\dlog{k}$ be the operator sending $f$ to $\frac{\ipartial{k} f}{f}$. We regard
		$\ipartial{k}$ and $\dlog{k}$ as operators on $\F_p\powerseries{x}$ and on $\Z_p\powerseries{x}$. Note that if $f$ is contained in a finite extension $L$
		of $\F_p(x)$, then so is $\dlog{k}(f)$ for all $k\geq 0$. This gives a differential
		transcendence criterion: if the $\dlog{k}(f)$ generate an infinite extension of $\F_p(x)$ then $f$ is transcendental over $\F_p(x)$. In \S \ref{s: higher derivatives and the artin-hasse exponential} we study the iterated differential equations satisfied by $\overline{E}_p(x)$
		and power products of the $\overline{E}_p(f_r)$. We prove that $\dlog{k}(\AHp(x))$ is contained in $M_k$,
		but not in $M_{k-1}$. For example, when $k=1$ a quick calculation shows
		that $\frac{d}{dx}\overline{E}_p(x)=\frac{y}{x}\overline{E}_p(x)$, where $y^\sigma - y = x$.
		More precisely, we determine $\dlog{k}(\AHp(x))$ modulo $M_{k-1}$. This is
		enough to show $\AHp(x)$ is transcendental. Next,
		using standard properties of higher derivatives and results from \S \ref{s: solutions to iterated frobenius equations}, we are able to determine $\dlog{k}\left(\overline{E}_p(f_1)^{a_1}\dots \overline{E}_p(f_r)^{a_r}\right)$
		modulo $M_{k-1}$, where $a_i \in \Z$ and $f_i \in \F_p[x]$. 
		
		\paragraph{Iterative differential Galois groups}
		Let $K$ be the algebraic closure of $\F_p(x)$ in $\overline{\F}_p\LS{x}$. Note
		that $M$ is contained in $K$. An $\ID$-module over $K$ is
		a vector space over $K$ with compatible actions of $\ipartial{k}$. For any $f \in x\F_q[x]$ we let $N_f$ denote the $\ID$-module associated to the differential equation satisfied by
		$\AHp(f)$ (see \S \ref{ss: ID Galois theory and transcendence} for a precise
		description of this $\ID$-module). Following Matzat and van der Put (see \cite{Matzat-vanderPut-Iterative_diff_eqs}) we may define an $\ID$-Galois group
		$\Gal(N_f)$. Then $\Gal(N_f)$ is a closed subgroup of the multiplicative group $\G_m$ over $\overline{\F}_p$ and is equal to all of $\G_m$ precisely when $\AHp(f)$ is transcendental over $\F_p(x)$. More generally, 
		the algebraic independence of $\AHp(f_1), \dots, \AHp(f_r)$ is equivalent
		to the $\ID$-Galois group $\Gal(N_{f_1} \oplus \dots \oplus N_{f_r})$ being isomorphic
		to $\G_m^{r}$. Using the Tannakian interpretation of $\ID$-Galois groups, it is enough
		to prove $\overline{E}_p(f_1)^{a_1}\dots \overline{E}_p(f_r)^{a_r}$ is transcendental
		for any $a_i \in \Z$. We then address this transcendentce question under the conditions
		of Theorem \ref{t: automata result} and Theorme \ref{t: algebraic independence result}
		using the results from \S \ref{s: higher derivatives and the artin-hasse exponential}.

		\subsection{Further questions}
		It would be interesting to completely determine the algebraic relations between
		values $\AHp(f)$ for $f \in x \overline{\F}_p[x]$. More generally, it would be
		interesting to determine the algebraic relations between values of $\AHp(x)$ at
		any algebraic power series. At the moment it is even unclear to the author
		what the algebraic relations are between the $\AHp(\xi x)$ where $\xi$ ranges over  $ \overline{\F}_p$.  The reader may observe the influence of Ax's work on Schanuel's conjecture in \cite{Ax-schanuels_conjecture} on the final steps of the proof of Theorem \ref{t: algebraic independence result}.
		Thus, it is natural to ask if an analogue of Ax-Schanuel hold for the $\AHp(x)$.

		\subsection{Acknowledgments}
		The author wishes to thank Dinesh Thakur for his enthusiasm and his comments on some very preliminary versions.

	\section{An automatic approach to the transcendence of $\AHp(x)$}
	\label{s: an automatic approach to the transcendence}
	In this section we give a short proof of the transcendence of $\AHp(x)$ using
	the functional equation satisfied by $\AHp(x)$ and automatic methods.
	\subsection{A transcendental criterion}
	
	We recall Christol's well-known automatic criterion for algebraicity of series in $\F_p\powerseries{x}$. See \cite{Christol} or \cite{Christol-Kamae-France-Rauzy} for
	proofs. 
	\begin{theorem}
		\label{t: algebraic criterion via automata}
		Let $f=\sum f_n x^n \in \F_p\powerseries{x}$. The following are equivalent. 
		\begin{enumerate}
			\item The series $f$ is algebraic over $\F_p[x]$. 
			\item There is a $p$-automaton that gives output $f_n$ on input $n$.
			\item There are only finitely many subsequence of $(f_n)_{n \geq 0}$ of the form
			$(f_{p^k n + r})_{n \geq 0}$, where $k$ is positive and $r$ is in the range $0\leq r < p^k$. 
		\end{enumerate}
	\end{theorem}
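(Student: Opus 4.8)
The plan is to separate the purely combinatorial equivalence $(2)\Leftrightarrow(3)$ from the arithmetic equivalence $(1)\Leftrightarrow(3)$, and to run the arithmetic part through the \emph{decimation} (Cartier) operators. For $0\le r<p$ let $\Lambda_r\colon \F_p\powerseries{x}\to\F_p\powerseries{x}$ be the $\F_p$-linear map $\Lambda_r(\sum f_n x^n)=\sum_n f_{pn+r}x^n$. Two identities organize everything: the reconstruction identity $f=\sum_{r=0}^{p-1}x^r\Lambda_r(f)^p$ (from $\F_p\powerseries{x}=\bigoplus_{r}x^r(\F_p\powerseries{x})^p$, using that Frobenius is the identity on $\F_p$) and the semilinearity $\Lambda_r(u^p v)=u\,\Lambda_r(v)$ for $u\in\F_p\powerseries{x}$. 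Iterating, the composites $\Lambda_w:=\Lambda_{w_k}\circ\cdots\circ\Lambda_{w_1}$ over finite words $w$ in $\{0,\dots,p-1\}$ realize exactly the subsequences in $(3)$ (the \emph{$p$-kernel} of $f$), so $(3)$ asserts precisely that $\{\Lambda_w f\}$ is a finite set. The equivalence $(2)\Leftrightarrow(3)$ is then Eilenberg's theorem, pure automata theory: from a finite $p$-kernel one builds the machine whose states are the kernel sequences, with transition on digit $j$ sending $(g_n)$ to $(g_{pn+j})$ and output of state $(g_n)$ equal to $g_0$, and feeding the base-$p$ expansion of $n$ (most significant digit first) outputs $f_n$; conversely an $N$-state machine computes a sequence with $p$-kernel of size at most $N$.

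For $(3)\Rightarrow(1)$: if $\{\Lambda_w f\}$ is finite it spans a finite-dimensional $K$-subspace $V\subseteq\F_p\powerseries{x}$ with $f\in V$, where $K:=\F_p(x)$; using the semilinearity together with $K=\bigoplus_r x^r K^p$ one checks $\Lambda_r(V)\subseteq V$ for every $r$, and then the reconstruction identity forces $V\subseteq\sum_j K\,v_j^{\,p}$ for any $K$-basis $v_1,\dots,v_e$ of $V$. Consequently the subfield $L\subseteq\F_p\powerseries{x}$ generated over $K$ by the $v_j$ satisfies $L=K\cdot L^p$, so for the universal $K$-derivation $d\colon L\to\Omega_{L/K}$ every element of $L$ is a $K$-combination of $p$-th powers and hence has $d$-image $0$; thus $\Omega_{L/K}=0$, which for a finitely generated field extension forces $L/K$ to be finite (and separable). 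Then $f\in L$ is algebraic over $K$, giving $(1)$.

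For $(1)\Rightarrow(3)$, the substantive direction: first reduce to $f$ separable over $K$. If $f$ satisfies an algebraic equation all of whose $Y$-exponents are divisible by $p$, then $f^p=\sum f_n x^{pn}$ is algebraic of strictly smaller degree; iterating, one reaches either $f^{p^k}$ separable over $K$ or $f^{p^k}\in K$, and in the purely inseparable case $f$ is supported on exponents divisible by $p^k$ and its $p$-kernel reduces to that of a rational function, which is finite (a rational function has ultimately periodic, hence $p$-kernel-finite, coefficient sequence). So assume $f$ separable; then $L:=K(f)$ is finite separable over $K$, and by the same direct-sum argument as above applied inside $L$ — note $x\notin L^p$ since $x$ has $x$-adic valuation $1$, so $L=\bigoplus_r x^r L^p$ — each $\Lambda_r$ maps $L$ into $L$. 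Hence the entire $p$-kernel lies in $L$, and spans a finite-dimensional $K$-space $V\subseteq L$. It remains to bound the number of distinct $\Lambda_w f$, even though they run over an infinite field: writing elements of $L$ with a fixed common denominator $Q\in\F_p[x]$ and numerators in a fixed finitely generated $\F_p[x]$-module (for instance $\tfrac1Q$ times the integral closure of $\F_p[x]$ in $L$), one shows each $\Lambda_r$ can be arranged not to enlarge the denominator nor raise the numerator degree, so that all $\Lambda_w f$ have the shape $g/Q$ with $g$ confined to a \emph{fixed finite} set.

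I expect the main obstacle to be precisely this last point in $(1)\Rightarrow(3)$: exhibiting a single denominator $Q$ and a single finitely generated module simultaneously stabilized by all the $\Lambda_r$, and extracting a numerator-degree bound uniform in the word $w$. This is the technical core of Christol's original argument; the automata translation $(2)\Leftrightarrow(3)$ and the Kähler-differential argument for $(3)\Rightarrow(1)$ are formal. For the details I would follow \cite{Christol} and \cite{Christol-Kamae-France-Rauzy}.
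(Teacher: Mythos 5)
The paper does not prove Theorem \ref{t: algebraic criterion via automata} at all: it is quoted as Christol's theorem, with the proof delegated to \cite{Christol} and \cite{Christol-Kamae-France-Rauzy}, so there is no internal argument to compare yours against; judged on its own, your outline is the standard decimation/$p$-kernel proof of that result. The parts you actually carry out are sound: $(2)\Leftrightarrow(3)$ is Eilenberg's theorem, and your $(3)\Rightarrow(1)$ --- the $K$-span $V$ of the kernel satisfies $V\subseteq\sum_j K v_j^p$, hence $L=K(v_1,\dots,v_e)$ satisfies $L=K\cdot L^p$, hence $\Omega_{L/K}=0$ and $L/K$ is finite separable --- is complete, provided you work inside $\F_p\LS{x}$ (where $K=\F_p(x)$ actually lives) rather than $\F_p\powerseries{x}$. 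In the reduction to the separable case there is a slip: it is $f^{p^k}$, not $f$, that is supported on exponents divisible by $p^k$; what you actually need is $f=\Lambda_0^{\circ k}(f^{p^k})$, so the $p$-kernel of $f$ is contained in that of $f^{p^k}$, which is how you pass from the separable (or rational) series $f^{p^k}$ back to $f$.

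The genuine gap is the one you flag yourself, and it is not a side issue: in $(1)\Rightarrow(3)$, everything up to ``the whole $p$-kernel lies in the finite separable extension $L=K(f)$'' is soft, and the entire content of Christol's theorem in this direction is the finiteness of the orbit $\{\Lambda_w f\}$ inside the infinite field $L$. Asserting that ``each $\Lambda_r$ can be arranged not to enlarge the denominator nor raise the numerator degree'' and deferring to the references leaves exactly this unproved, so as written the proposal is an outline, not a proof. To close it along classical lines: having reduced to $f$ separable and nonzero, linear disjointness of $L$ from $K^{1/p}$ yields an Ore-type relation $b_0 f=\sum_{i=1}^{m} b_i f^{p^i}$ with $b_i\in\F_p[x]$ and $b_0\neq 0$; then, after rescaling $f$ by a suitable power of $b_0$, one checks that the set of series of the form $\sum_{i=1}^{m} c_i f^{p^i}$ with $\deg c_i$ bounded by a constant depending only on the $b_i$ is finite, contains the rescaled $f$, and is stable under every $\Lambda_r$, using $\Lambda_r(u^p v)=u\,\Lambda_r(v)$, the contraction $\deg\Lambda_r(c)\le\deg(c)/p$, and the Ore relation to rewrite the $f^{p^0}$-terms that appear; this is the argument of \cite{Christol} and of Chapter 12 of \cite{Allouche-Shallit-Automatic_sequences}, while Speyer's Cartier-operator-on-differentials proof is the rigorous version of your ``stable lattice with bounded pole orders'' idea. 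Either write this step out or simply cite the theorem, as the paper does.
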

	This allows us to establish the following transcendence criterion:
	\begin{lemma}
		\label{l: transcence of $(1+x)^c$} Let $\lambda \in \Z_p$. Let $f \in 1 + x\F_p\powerseries{x}$ be algebraic over $\F_p[x]$. Then $f^\lambda \in \F_p\powerseries{x}$ is algebraic over $\F_p[x]$ if and only if $\lambda$ is rational. 
	\end{lemma}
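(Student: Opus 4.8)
The plan is to prove the ``if'' direction by exhibiting an explicit algebraic relation, and the ``only if'' direction via Christol's criterion by analysing how the $p$-adic digits of $\lambda$ govern the Cartier sections of $f^\lambda$. We may assume $f \ne 1$, the case $f = 1$ being degenerate. Recall that $f^\lambda$ is defined by the convergent binomial series $\sum_n \binom{\lambda}{n}(f-1)^n \in \F_p\powerseries{x}$ and that $\mu \mapsto f^\mu$ is a continuous homomorphism $\Zp \to 1 + x\F_p\powerseries{x}$. For the ``if'' direction: if $\lambda \in \Q$ then, since $\lambda \in \Zp$, we may write $\lambda = a/b$ with $a \in \Z$, $b \ge 1$, $p \nmid b$; the homomorphism property gives $(f^\lambda)^b = f^a$, which is algebraic over $\F_p[x]$ because $f$ is, so $f^\lambda$ is a root of $T^b - f^a$ and hence algebraic over $\F_p[x]$.

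For the ``only if'' direction, suppose $g := f^\lambda$ is algebraic. For $0 \le j \le p-1$ let $\Lambda_j(\sum a_n x^n) = \sum a_{pn+j}x^n$ be the section operators on $\F_p\powerseries{x}$, so the $p$-kernel of a series is its orbit closure under the $\Lambda_j$, and by part (3) of Theorem~\ref{t: algebraic criterion via automata} a series is algebraic over $\F_p[x]$ iff its $p$-kernel is finite. The computational heart is the identity
\[
\Lambda_j\big(f^\mu\big) = \Lambda_j\big(f^{\mu_0}\big)\cdot f^{S\mu}, \qquad 0 \le j \le p-1,\ \mu \in \Zp,
\]
where $\mu_0 \in \{0,\dots,p-1\}$ is the first $p$-adic digit of $\mu$ and $S\mu := (\mu-\mu_0)/p$ is the digit shift; this follows from $f^\mu = f^{\mu_0}\cdot (f^{S\mu})^p$ together with $\Lambda_j(\ell \cdot h^p) = \Lambda_j(\ell)\cdot h$ and $h^p = h(x^p)$ in $\F_p\powerseries{x}$. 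Iterating, every element of the $p$-kernel of $g$ has the shape $E \cdot f^{S^k\lambda}$ with $k \ge 0$ and $E$ obtained from $1$ by finitely many steps $\ell \mapsto \Lambda_j(\ell \cdot f^c)$, $0 \le c \le p-1$; in particular each such $E$ is algebraic over $\F_p[x]$, being built from $f$ and integer exponents. Along the branch that always applies $\Lambda_0$ one gets $\Lambda_0^k(g) = E_k\cdot f^{S^k\lambda}$ with $E_0 = 1$, $E_{k+1} = \Lambda_0(E_k f^{\lambda_k})$ (here $\lambda_k$ is the $k$-th $p$-adic digit of $\lambda$); a constant-term computation shows each $E_k$ lies in $1 + x\F_p\powerseries{x}$, and $(\Lambda_0^k(g))_{k\ge0}$ is the $\Lambda_0$-orbit of $g$.

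Since the $p$-kernel of $g$ is finite, the sequence $(\Lambda_0^k(g))_{k\ge0}$ is eventually periodic, say $\Lambda_0^{k+P}(g) = \Lambda_0^k(g)$ for all $k \ge k_0$. This gives $E_{k+P}f^{S^{k+P}\lambda} = E_k f^{S^k\lambda}$, hence $f^{\,S^k\lambda - S^{k+P}\lambda} = E_{k+P}/E_k$, an element algebraic over $\F_p[x]$. The target is to conclude $S^k\lambda = S^{k+P}\lambda$ for all $k \ge k_0$: this says $\lambda_j = \lambda_{j+P}$ for $j \ge k_0$, so $\lambda$ has eventually periodic $p$-adic expansion, i.e.\ $\lambda \in \Q$. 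A key ingredient is the injectivity of $\mu \mapsto f^\mu$ on $\Zp$: if $f-1$ has order $m$ with leading coefficient $c_m \ne 0$ in $\F_p$, then $f^\mu - 1$ has order $m$ with leading coefficient $\overline{\mu}\,c_m$, so $f^\mu = 1$ forces $\mu \equiv 0 \pmod p$, and one descends using injectivity of Frobenius on $\F_p\powerseries{x}$. In particular the exponents $S^k\lambda$ are pairwise distinct once $\lambda \notin \Q$.

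The hard part is precisely this last step: ruling out a ``conspiracy'' in which $E_{k+P}/E_k$ equals a nontrivial power $f^\nu$ with $\nu = S^k\lambda - S^{k+P}\lambda \ne 0$ — equivalently, controlling the auxiliary algebraic series $E_k$ (which, when $\lambda\notin\Q$, cannot themselves be eventually periodic, since they compensate the infinitely many distinct $f^{S^k\lambda}$) tightly enough to force the $p$-kernel of $g$ to be infinite as soon as $\lambda\notin\Q$. The natural tools are Christol's module-theoretic form of the criterion (an algebraic series lies in some finitely generated $\Lambda_j$-stable $\F_p[x]$-submodule of $\F_p\powerseries{x}$, and such modules are closed under sums and products) to bound the complexity of the $E_k$, together with a leading-coefficient analysis: the coefficient of $x^m$ in $\Lambda_0^k(g) = E_k f^{S^k\lambda}$ is $[E_k]_m + \lambda_k c_m$, so if the $E_k$-side can be controlled, eventual periodicity of the kernel transfers to eventual periodicity of the digit sequence $(\lambda_k)_k$; this bookkeeping — and possibly a descent argument on $\lambda$ — is the only nonformal part. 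As a sanity check, for $f = 1+x$ every $E_k$ equals $1$, the $p$-kernel of $(1+x)^\lambda$ is literally $\{(1+x)^{S^k\lambda} : k \ge 0\}$, and the equivalence is immediate; the work above is what is required to reach a general algebraic $f$, such as $\AHp(x)$.
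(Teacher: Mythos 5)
Your ``if'' direction is fine, and the Cartier-operator setup is correct as far as it goes: the identity $\Lambda_j(f^{\mu})=\Lambda_j(f^{\mu_0})\,f^{S\mu}$, the resulting description $\Lambda_0^k(g)=E_k f^{S^k\lambda}$ with $E_k$ algebraic, and the injectivity of $\mu\mapsto f^{\mu}$ are all right. But the ``only if'' direction is not proved: the step you yourself flag as ``the hard part'' is not bookkeeping, it is the entire content of the lemma. From eventual periodicity of the $\Lambda_0$-orbit you only obtain that $f^{\nu}$ is algebraic, where $\nu=S^{k}\lambda-S^{k+P}\lambda$ is nonzero when $\lambda\notin\Q$. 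To reach a contradiction you would need to know that algebraicity of $f^{\nu}$ forces $\nu\in\Q$ (note that $\nu\in\Q$ does imply $\lambda\in\Q$: with $\mu=S^k\lambda$ and $t$ the integer formed by its first $P$ digits, $\nu=\mu-S^P\mu$ gives $(p^P-1)\mu=p^P\nu-t$, so $\mu$ and hence $\lambda$ would be rational). That statement is exactly the lemma again, applied to the new exponent $\nu$, and you supply no descent or decreasing invariant to break the circularity, nor any concrete mechanism for ``controlling the $E_k$'' that would rule out the identity $E_{k+P}/E_k=f^{\nu}$ with $\nu\neq0$. So the proposal has a genuine gap, and it sits precisely where the difficulty of the lemma lies for a general algebraic $f$.

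For comparison, the paper's proof takes a different, more elementary route that avoids this issue: it reduces to the single series $f=1+x$ (the general statement being the cited result of Allouche--Mend\`es France--van der Poorten), where, as your own sanity check observes, the auxiliary factors $E_k$ are identically $1$ and no conspiracy can occur. For $1+x$ the argument is an explicit coefficient computation: after subtracting the rational pieces $\lambda_c$ of the digit expansion and taking the largest digit value $c_0$ with $\lambda_{c_0}\neq0$ (hence irrational), the product expansion $(1+x)^{\lambda}=\prod_n(1+x^{p^n})^{a_n}$ shows that the coefficient of $x^{c_0p^k}$ equals $1$ exactly when $a_k=c_0$, i.e.\ when $k\in S_{c_0}$; since $S_{c_0}$ is not eventually periodic, the subsequences $(b_{np^k})_{n\geq1}$ are pairwise distinct, and part (3) of Theorem \ref{t: algebraic criterion via automata} gives transcendence. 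To salvage your write-up you should either carry out (or cite) the reduction to $f=1+x$, in which case your framework closes immediately, or else supply the missing argument controlling the $E_k$ for general algebraic $f$.
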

	\begin{proof}
		This is a special case of \cite{Allouche-MendesFrance-vanderPoorten}. We will give a proof in the interest of having a quick and self-contained automatic proof that $\AHp(x)$ is transcendental. We easily reduce to the case where $f = 1+x$. If $\lambda$ is rational it is clear that $(1+x)^\lambda$ is algebraic. For the other direction, assume $\lambda$ is irrational. Write $\lambda = \sum_{n=0}^\infty a_n p^n$. For $1 \leq c \leq p-1$ we define sets
		\[S_c := \left\{n ~|~ a_n = c \right\}  ~~\text{and}~~ S_c^{>k} := S_c \cap \Z_{>k}.\]
		We then set $
		\lambda_c = \sum_{n \in S_c} a_np^n.$
		If $\lambda_c$ is nonzero and rational, we may replace $\lambda$ with $\lambda-\lambda_c$. In particular, we may assume each $\lambda_c$ is either
		zero
		or non-rational. Let $c_0$ be the largest number such that $\lambda_{c_0} \neq 0$. As
		$\lambda_{c_0}$ is irrational for each $k,m>0$ we have
		\begin{align}\label{eq: algebraic criterion for power series}
		m + S_{c_0}^k &\neq S_{c_0}^{k + m},
		\end{align}
		as otherwise the digits of $\lambda_{c_0}$ would eventually be periodic.
		
		Next, write $(1+x)^\lambda = \sum_{n=0} b_nx^n$. By using the product expansion
		$(1+x)^\lambda = \prod (1+x^{p^n})^{a_n}$ we see that
		\begin{align*}
		b_{c_0p^k}& \begin{cases}
		1 & k \in S_{c_0} \\
		0 & \text{otherwise}
		\end{cases}.
		\end{align*} 
		Consider the sequences $B_k=(b_{np^k})_{n \geq 1}$. By Theorem \ref{t: algebraic criterion via automata}
		if we prove that the $B_k$ are all distinct, we will know $(1+x)^\lambda$ is transcendental.  Note that the $c_0p^n$-th element in $B_k$ is $1$ if $n+k \in S_{c_0}^{k}$ and zero otherwise. In particular, if $B_{k}= B_{k+m}$, then we must have
		$m + S_{c_0}^k  = S_{c_0}^{k+m}$. However, by \eqref{eq: algebraic criterion for power series} we know this can never happen, which proves the lemma.
	\end{proof}

	\subsection{A short proof of the transcendence of $\AHp(x)$}
	The first step is to prove that $\AHp(x)$ satisfies a functional equation.
	\begin{lemma}
		\label{l: functional equation}
		For $\xi\in \F_p^\times$ we have $\AHp(\xi x)=\AHp(x)^{[\xi]}$, where $[\xi]\in \Z_p$
		denotes the Teichmuller lift of $\xi$. 
	\end{lemma}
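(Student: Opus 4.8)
The plan is to lift the identity to $\Z_p$, where it follows almost immediately from the defining exponential formula for $E_p$, and then reduce modulo $p$.

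First I would establish the \emph{exact} identity $E_p([\xi]x) = E_p(x)^{[\xi]}$ in $1 + x\Z_p\powerseries{x}$, where for $g \in 1 + x\Z_p\powerseries{x}$ the power $g^{[\xi]}$ is the $\Z_p$-power (given by the binomial series $\sum_{k \geq 0}\binom{[\xi]}{k}(g-1)^k$, or equivalently over $\Q_p$ by $\exp([\xi]\log g)$). The crucial point is that $\xi^p = \xi$ in $\F_p$, so by multiplicativity of the Teichmuller lift $[\xi]^{p^n} = [\xi]$ for every $n \geq 0$; hence
\[
\log E_p([\xi]x) = \sum_{n \geq 0}\frac{([\xi]x)^{p^n}}{p^n} = \sum_{n \geq 0}\frac{[\xi]^{p^n}x^{p^n}}{p^n} = [\xi]\sum_{n \geq 0}\frac{x^{p^n}}{p^n} = [\xi]\log E_p(x),
\]
and exponentiating gives $E_p([\xi]x) = E_p(x)^{[\xi]}$. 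In particular this shows $E_p(x)^{[\xi]} \in \Z_p\powerseries{x}$, so its reduction modulo $p$ is defined.

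Next I would apply the reduction homomorphism $\Z_p\powerseries{x} \to \F_p\powerseries{x}$. Since reduction commutes with the substitution $x \mapsto cx$ and $[\xi]$ reduces to $\xi$, the left side $E_p([\xi]x)$ reduces to $\AHp(\xi x)$. For the right side one checks that reduction restricts to a homomorphism of $\Z_p$-modules $1 + x\Z_p\powerseries{x} \to 1 + x\F_p\powerseries{x}$ that is continuous for the $x$-adic topologies and intertwines the (continuous) $\Z_p$-actions; since it visibly intertwines integer powers and $\Z$ is dense in $\Z_p$, it intertwines all $\Z_p$-powers, so $E_p(x)^{[\xi]}$ reduces to $\AHp(x)^{[\xi]}$. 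Comparing the two reductions gives the lemma.

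The only real bookkeeping is this last point: that $g \mapsto g^r$ is a well-defined continuous $\Z_p$-action on $1 + x\F_p\powerseries{x}$, compatible with reduction from $1 + x\Z_p\powerseries{x}$. I do not anticipate a genuine obstacle. (One cannot shortcut this by differentiating both sides and invoking uniqueness: in characteristic $p$ both sides satisfy $xu' = \xi\bigl(\sum_{n\geq 0} x^{p^n}\bigr)u$ with constant term $1$, but this first-order equation leaves the coefficients of $x^{pm}$ undetermined, so it does not by itself force the two series to coincide; the reduction-from-$\Z_p$ argument is the clean route.)
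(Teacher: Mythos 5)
Your proof is correct and takes essentially the same route as the paper: both first establish the exact identity $E_p([\xi]x)=E_p(x)^{[\xi]}$ in characteristic zero from the key fact $[\xi]^{p^n}=[\xi]$ (the paper packages your $\log$/$\exp$ computation as the composition identities $g\circ L_{[\xi]}=L_{[\xi]}\circ g$ and $\exp\circ L_{[\xi]}=P_{[\xi]}\circ(\exp(x)-1)$) and then reduce modulo $p$. Your extra verification that reduction to $\F_p\powerseries{x}$ intertwines $\Z_p$-powers is a worthwhile detail that the paper leaves implicit.
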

	\begin{proof}
		For $f_0 \in \Q_p\powerseries{x}$ and $f_1,\dots,f_n \in x\Q_p\powerseries{x}$ we may take the composition $f_0 \circ \dots \circ f_n \in \Q_p\powerseries{x}$ and this operation is associative. For $c \in \Z_p$ we define
		\[g(x) = \sum_{n=1}^\infty \frac{x^{p^n}}{p^n}, ~~~L_c(x)=cx, ~~\text{and} ~~P_c(x)=(1+x)^c = \sum_{n=0}^\infty \binom{c}{n} x^n. \]
		Note that $E_p(x) = \exp\circ g$. Also, for $\xi \in \F_p^\times$ we have the relations:
		\[g\circ L_{[\xi]} = L_{[\xi]} \circ g ~~\text{and}~~\exp \circ L_{[\xi]} = P_{[\xi]} \circ (\exp(x) - 1).\]
		The first relation comes from the fact that $[\xi]^p=[\xi]$.
		Thus, we compute
		\begin{align*}
		E_p([\xi]x) &= \exp \circ g\circ L_{[\xi]}  \\
		&= \exp \circ L_{[\xi]} \circ g \\
		&= P_{[\xi]} \circ (\exp(x) - 1) \circ g \\
		&= P_{[\xi]} \circ (E_p(x)-1) = \AHp(x)^{[\xi]}.
		\end{align*}
		The lemma follows by reducing modulo $p$.
	\end{proof}
		
	\begin{theorem}
		The series $\AHp(x)$ is transcendental over $\F_p[x]$ if $p\geq 5$. 
	\end{theorem}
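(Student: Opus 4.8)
The plan is to argue by contradiction, feeding the functional equation of Lemma~\ref{l: functional equation} into the transcendence criterion of Lemma~\ref{l: transcence of $(1+x)^c$}. Suppose $\AHp(x)$ is algebraic over $\F_p[x]$, say $P(x,\AHp(x))=0$ for some nonzero $P\in\F_p[x][T]$. Since $p\geq 5$, the cyclic group $\F_p^\times$ contains an element $\xi$ of order $p-1\geq 4$; in particular $\xi\neq\pm 1$ in $\F_p$. This element $\xi$ is the only place the hypothesis $p\geq 5$ is used: for $p=2,3$ one has $\F_p^\times\subseteq\{\pm1\}$ and no such $\xi$ exists.

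First I would observe that the substitution $x\mapsto\xi x$ is an $\F_p$-algebra automorphism of $\F_p\powerseries{x}$ carrying $\F_p[x]$ onto itself, hence it preserves algebraicity over $\F_p[x]$: applying it to the relation $P(x,\AHp(x))=0$ yields $P(\xi x,\AHp(\xi x))=0$ with $P(\xi x,T)\in\F_p[x][T]$ nonzero, so $\AHp(\xi x)$ is algebraic over $\F_p[x]$. Next, Lemma~\ref{l: functional equation} gives $\AHp(\xi x)=\AHp(x)^{[\xi]}$ with $[\xi]\in\Z_p$ the Teichm\"uller lift of $\xi$. Since $\AHp(x)\in 1+x\F_p\powerseries{x}$ is, by assumption, algebraic over $\F_p[x]$ and its $[\xi]$-th power $\AHp(\xi x)$ is also algebraic, Lemma~\ref{l: transcence of $(1+x)^c$} forces $[\xi]$ to be a rational number.

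Finally I would rule this out using the structure of $[\xi]$. Reduction modulo $p$ is injective on Teichm\"uller representatives, so $[\xi]$ has the same multiplicative order in $\Z_p^\times$ as $\xi$ does in $\F_p^\times$, namely $p-1$. Thus $[\xi]$ is a root of unity lying in $\Z_p\cap\Q=\Z_{(p)}$, and the only roots of unity in $\Q$ are $\pm 1$; but $[\xi]\equiv\xi\pmod p$ and $\xi\neq\pm 1$ since $p-1\geq 4$, so $[\xi]\neq\pm 1$ in $\Z_p$. This contradicts the rationality of $[\xi]$, and therefore $\AHp(x)$ is transcendental over $\F_p[x]$. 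There is no genuinely hard step here: the entire content has been pushed into Lemma~\ref{l: transcence of $(1+x)^c$}, and the remaining work is the bookkeeping above plus the elementary fact that a rational root of unity is $\pm 1$.
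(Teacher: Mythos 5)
Your proposal is correct and follows essentially the same route as the paper: assume algebraicity, pass to $\AHp(\xi x)$, invoke Lemma~\ref{l: functional equation} and Lemma~\ref{l: transcence of $(1+x)^c$} to force $[\xi]\in\Q$, and derive a contradiction. The only difference is cosmetic: you spell out via the root-of-unity argument why $[\xi]$ is irrational for $p\geq 5$, a point the paper leaves implicit.
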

	\begin{proof}
		Take $c \in \F_p^\times$. Since $p\geq 5$ we can pick $c$ so that $[c]$ is not an integer and thus not rational. Assume $\AHp(x)$ is algebraic. This implies $\AHp(cx)$ is algebraic. Thus, by Lemma \ref{l: functional equation} we know that $\AHp(x)^{[c]}$ is algebraic. However, we know $[c]$ is not rational, so Lemma \ref{l: transcence of $(1+x)^c$} tells us that  $\AHp(x)^{[c]}$ is transcendental. This is a contradiction. 
	\end{proof}

	\begin{remark}
		One can get algebraic independence results using
		the full result of \cite{Allouche-MendesFrance-vanderPoorten}. In particular, the argument above
		can be modified to show $\dim_{\F_p(x)}(\AHp(cx))_{c \in \F_p^\times} \geq \phi(p-1)-1$. As we prove in \ref{s: proof of alg ind results} that $\dim_{\F_p(x)}(\AHp(cx))_{c \in \F_p^\times}=\phi(p-1)$, we omit the proof of this weaker result.
	\end{remark}

	\section{Solutions to iterated Frobenius equations} \label{s: solutions to iterated frobenius equations}
	Let $\F_q$ be the field with $q=p^f$ elements and fix $\zeta \in \F_q$ such that $\zeta,\zeta^p, \dots, \zeta^{p^{f-1}}$ is a basis
	of $\F_q$ over $\F_p$. Let $\Z_q$ denote the $p$-typical Witt vectors $W(\F_q)$ of $\F_q$. 
	By abuse of notation we regard the $\zeta$ as an element of $\Z_q$ by identifying $\zeta$ with its Teichmuller lift. It will be convenient to define $x_j^i = \zeta^{p^j}x^i$ and the set
	\begin{align*}
	X:=\left\{ x_j^i ~\middle|~ \begin{array}{c}1 \leq j \leq f \\ i \geq 1, p\nmid i
	\end{array}\right\}.
	\end{align*}

	\subsection{Iterated Frobenius equations}
		Let $\sigma: \Z_q\LS{x} \to \Z_q\LS{x}$ denote the $\Z_p$-linear endomorphism sending $x$ to $x^p$ and acting as the Frobenius endomorphism on $\Z_q$.
		Let $f \in x\Z_q\powerseries{x}$. We define $\tau (f)$ to be the unique solution to $y^\sigma - y = f$ in $x\Z_q\LS{x}$. Concretely, we have
		\[\tau(f) = \sum_{n=0}^\infty f^{\sigma^n}.\]
		We regard $\tau$ as a $\Z_p$-linear map. Note that $\tau$ satisfies these two fundamental equations:
		\begin{align}
			\tau(f)\tau(g) &= \tau(f\tau(g)) + \tau(g\tau(f)) - \tau(fg),\label{eq: tau multiplication}\\
			\tau(f^\sigma \tau(g)) &=\tau(f\tau(g)) + \tau(f^\sigma g) - f \tau(g) \label{eq: tau p power}.
		\end{align}
		\begin{definition}
			Let $[k]$ denote the set $\{1,\dots,k\}$ and let $[0]$ denote the empty set. An \emph{iterated Frobenius datum} $b$ (abbreviated IF-datum) of depth $k$ is
			a function 
			\begin{equation*}
				b:[k] \to x\Z_q\powerseries{x}.
			\end{equation*}
			If the image of $b$ is in $X$ we say $b$ is a $p$-primary $\IF$-datum.
			We denote the depth of $b$ by $\dep{b}$. We remark that there is exactly one
			$\IF$-datum of depth $0$. We define $D$ to be the set of all $\IF$-data. 
			\end{definition}
			\begin{definition}
				
		We let $b^{tr}$ denote
			the restriction of $b$ to $[k-1]$ (we think of $b$ as a list of elements and $b^{tr}$ as
			the truncation of the last element, hence the $tr$ in the superscript). Also, for any
			$z \in x\Z_q\powerseries{x}$ we let $\cat(b,z)$ to be the $\IF$-datum of depth $k+1$ defined by:
			\begin{align*}
				\cat(b,z)(i) &= \begin{cases} b(i) & i \in [k] \\
				z & i=k+1 
				\end{cases}.
			\end{align*}
		\end{definition}
	
		\begin{definition}
			Let $b$ be an IF-datum of depth $k$. We define
			$w_b\in x\Z_q\powerseries{x}$ by the recursive definition:
			\begin{align*}
				w_b &= \begin{cases}
					1 & \dep{b}=0 \\
					\tau(b(k)w_{b^{tr}}) & \dep{b}=k>0
				\end{cases}.
			\end{align*}
			We refer to $w_b$ as an iterative Frobenius solution (abbreviated $\IF$-solution).
		\end{definition}
		For an $\IF$-datum $b$ we may consider the iterative Frobenius equation:
		\[y_1^\sigma - y_1 = b(1), ~~ y_2^\sigma - y_2 = b(2)y_1, ~~  \dots,~~y_k^\sigma - y_k=b(k)y_{k-1}.\]
		There is a unique solution vector $(z_1,\dots,z_k) \in \F_q\powerseries{x}$ such that
		$z_i \in x^i\F_q\powerseries{x}$. Then $w_b$ is $z_k$.

		\subsection{Spaces generated by $\IF$-solutions}
		We make the following definitions:
		\begin{align*}
		M_k &:= \left \{\sum_{\substack{b \in D \\ \dep{b}\leq k}} a_b w_b ~|~ a_b \in \Z_q(x) \right\},\\
		\widehat{M}_k&:= \text{$p$-adic completion of $M_k $ in $\Z_q\powerseries{x}\langle x^{-1} \rangle $},\\
		A_k &:=\left \{\sum_{\substack{b \in D \\ \dep{b}\leq k}} a_b w_b ~|~ a_b \in \Z_q[x] \right\}.
		\end{align*}
		
		\begin{lemma}
			\label{c: tau applied to the A's}
			For $k\geq 0$ we have $\tau(xA_k) \subset A_{k+1}$. 
		\end{lemma}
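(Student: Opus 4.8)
The plan is to prove this by induction on $k$. Recall $A_k$ consists of $\Z_q(x)$... no, $\Z_q[x]$-linear combinations $\sum a_b w_b$ where $\dep{b} \leq k$, and we want to show that applying $\tau$ to $x \cdot (\text{such a combination})$ lands in $A_{k+1}$. Since $\tau$ is $\Z_p$-linear but *not* $\Z_q[x]$-linear, the content is really a statement about how $\tau$ interacts with multiplication by polynomials.

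First I would reduce to showing $\tau(x^m \zeta^{p^j} w_b) \in A_{k+1}$ for every monomial $x^m$ with $m \geq 1$, every $j$, and every $\IF$-datum $b$ of depth at most $k$: by $\Z_p$-linearity of $\tau$ and the fact that the $\zeta^{p^j}$ span $\Z_q$ over $\Z_p$ (at least after reduction; more carefully, one uses that every element of $\Z_q[x]$ is a $\Z_p[x]$-combination of the $\zeta^{p^j}$), it suffices to handle these basic pieces. Now $x^m \zeta^{p^j}$ need not lie in $X$ when $p \mid m$, so the first maneuver is to write $x^m = (x^{m'})^{\sigma^s} \cdot x^{m''}$ type decompositions — more precisely, if $m = p^s m_0$ with $p \nmid m_0$, then $x^m \zeta^{p^j} = (x_{j'}^{m_0})^{\sigma^s}$ for an appropriate index $j'$ (since $\zeta^{p^j} = (\zeta^{p^{j'}})^{\sigma^s}$ as Teichmüller lifts). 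This puts the monomial in the form $z^{\sigma^s}$ with $z \in X$, so I can try to peel off Frobenius twists using equation \eqref{eq: tau p power}.

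The heart of the argument is then an inductive use of the two fundamental identities. For the base case $k = 0$: $w_b = 1$, and $\tau(x \cdot \Z_q[x]) \subset A_1$ essentially by definition, since $\tau(z)$ for $z \in X$ is exactly an $\IF$-solution of depth $1$, and $\tau$ of a general element of $x\Z_q[x]$ is a $\Z_p$-combination of such (again handling $p$-th power monomials via \eqref{eq: tau p power} to reduce the exponent, noting $\tau(z^\sigma) = \tau(z) + (z - \text{lower order})$... — one must track that the "error terms" $z\tau(g)$, $f\tau(g)$ appearing in \eqref{eq: tau multiplication} and \eqref{eq: tau p power} have the right shape). For the inductive step with $w_b = \tau(b(k) w_{b^{tr}})$: I want to compute $\tau(x^m \zeta^{p^j} \tau(b(k) w_{b^{tr}}))$. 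Rewriting $x^m \zeta^{p^j}$ as a Frobenius twist of an element $u \in X$ (or a sum of $\Z_p$-multiples of such), I apply \eqref{eq: tau p power} with $f \leftrightarrow u$ and $g \leftrightarrow b(k) w_{b^{tr}}$: this expresses $\tau(u^{\sigma^s} \tau(g))$ in terms of $\tau(u^{\sigma^{s-1}} \tau(g))$ (to iterate down to $s = 0$), terms of the form $\tau(u^{\sigma^t} g)$, and products $u^{\sigma^t}\tau(g)$. The product $u^{\sigma^t} \tau(g) = x^{(\cdots)}\zeta^{(\cdots)} w_b$ is already in $A_{k+1}$ after absorbing the polynomial; the terms $\tau(u^{\sigma^t} g) = \tau(u^{\sigma^t} b(k) w_{b^{tr}})$ are of the form $\tau(x^{m'}\zeta^{j'} b(k) w_{b^{tr}})$ — since $b(k) \in x\Z_q[x]$, this is $\tau$ applied to $x \cdot (\text{polynomial}) \cdot w_{b^{tr}}$ with $\dep{b^{tr}} = k-1$, so the induction hypothesis gives membership in $A_k \subset A_{k+1}$. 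Finally the base $\tau(u \tau(g)) = w_{\cat(b^{tr}\text{-extended}, \cdot)}$-type term is directly an $\IF$-solution of depth $k+1$ up to a polynomial multiple.

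The main obstacle I expect is bookkeeping rather than conceptual: one must (a) be careful that when $u$ is replaced by a $\sigma$-twist $u^{\sigma^s}$ the identity \eqref{eq: tau p power} is applied in the correct direction and that the recursion on $s$ actually terminates, and (b) verify that all the "error" terms $f\tau(g)$ and polynomial prefactors stay inside $\Z_q[x] \cdot w_{(\cdot)}$ with depth $\leq k+1$ — in particular that multiplying an $\IF$-solution by a polynomial keeps it in $A$ of the same depth, which is immediate from the definition of $A_k$ but must be invoked. A secondary subtlety is the reduction from general $z \in xA_k$ to the monomial-times-$\zeta^{p^j}$-times-$w_b$ generators: this is fine since $A_k$ is by definition the $\Z_q[x]$-span of the $w_b$ and $\Z_q[x] = \bigoplus_j \Z_p[x]\zeta^{p^j}$, but it is the step where $\Z_p$-linearity (not $\Z_q[x]$-linearity) of $\tau$ is essential.
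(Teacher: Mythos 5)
Your proposal is correct and follows essentially the same route as the paper: induct on $k$, reduce by $\Z_p$-linearity to monomials $\zeta^{p^j}x^{p^t h}w_b$, peel off the Frobenius twists by iterating \eqref{eq: tau p power} so that the leading term $\tau(x_{j-t}^h w_b)$ is an $\IF$-solution of depth $k+1$, the product terms are polynomial multiples of $w_b$ lying in $A_k$, and the remaining $\tau(\cdot\, b(k) w_{b^{tr}})$ terms are absorbed by the inductive hypothesis (with the $k=0$ case handled by the same twist-peeling). This matches the paper's argument in both structure and detail.
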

		\begin{proof}
			 We proceed by induction on $k$. For $k=0$ we are reduced to showing $\tau(x_j^i) \in A_1$. 
			 Write $i=p^th$ where $h$ is coprime to $p$. Then 
			 \[ \tau(x_j^i) = \tau(x_{j-t}^{i-t}) - \sum_{m=1}^{t} x_{j-m}^{p^{t-m}h},\]
			 so $\tau(x_j^i) \in A_1$. 
			For $k>0$ it suffices to show $\tau(x_j^i w_b) \in A_{k+1}$ for any $w_b\in W$ with $\dep{w}=k$ and $k\geq 1$.
			Again we write $i=p^th$. 
			By repeatedly using \eqref{eq: tau p power}
			we obtain
			\[ \tau(x_j^i w_b) = \tau(x_{j-t}^{h}w_b) + \sum_{m=1}^{t} \tau(x_{j-m}^{p^{t-m}h}b(k) w_{b^{tr}} ) - \sum_{m=1}^t x_{j-m}^{p^{t-m}h}w_b. \]
			Note that $\tau(x_{j-t}^{h}w_b)$ is an $\IF$-solution of degree $k+1$, so it is
			contained in $A_{k+1}$. The second summation is clearly contained in $A_k$. Finally, 
			by our inductive hypothesis $\tau(x_{j-m}^{p^{t-m}h}b(k) w_{b^{tr}} )$ is contained in $A_k$.

		\end{proof}
		\begin{corollary}
			\label{c: IF-solutions are in M_n}
			For any $\IF$-datum $b$ of depth $k$ we have $w_b \in M_k$. 
		\end{corollary}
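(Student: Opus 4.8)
The plan is to obtain this as an immediate consequence of Lemma \ref{c: tau applied to the A's} by induction on the depth $k$ of $b$. Observe first that $A_k \subseteq M_k$: the two are spanned by the same elements, the $w_{b'}$ with $\dep{b'} \le k$, and differ only in allowing coefficients in $\Z_q[x]$ rather than $\Z_q(x)$. Hence it is enough to prove the sharper statement $w_b \in A_k$.

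The base case $k = 0$ is trivial: the unique $\IF$-datum of depth $0$ has $w_b = 1$, and $1 \in \Z_q[x] = A_0$. For the inductive step, let $b$ have depth $k \ge 1$ and assume the statement for all $\IF$-data of smaller depth. By the recursive definition of $w_b$ we have $w_b = \tau\bigl(b(k)\, w_{b^{tr}}\bigr)$, and $b^{tr}$ has depth $k-1$, so $w_{b^{tr}} \in A_{k-1}$ by the inductive hypothesis. Since $b(k) \in x\Z_q[x]$ and $A_{k-1}$ is a $\Z_q[x]$-module, we get $b(k)\, w_{b^{tr}} \in (x\Z_q[x])\, A_{k-1} = x A_{k-1}$. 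Lemma \ref{c: tau applied to the A's}, applied with $k - 1$ in place of $k$, now gives $w_b = \tau\bigl(b(k)\, w_{b^{tr}}\bigr) \in A_k \subseteq M_k$, which closes the induction.

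I do not anticipate a genuine obstacle: all the substance is already in Lemma \ref{c: tau applied to the A's}, and the corollary merely unwinds it along the recursion defining $w_b$. The only point requiring care is that the argument of $\tau$ lies in $x A_{k-1}$ — that is, $x$ times an element of $A_{k-1}$ — and not merely in $A_{k-1}$ or in $x\Z_q\powerseries{x}$, so that the hypothesis of the lemma is literally met; this follows from $x \mid b(k)$ together with the $\Z_q[x]$-module structure of $A_{k-1}$. (The statement is used in the sequel only for $\IF$-data with polynomial, indeed monomial, values, for which $b(k)\in x\Z_q[x]$ holds automatically; for $\IF$-data whose values are arbitrary elements of $x\Z_q\powerseries{x}$ one would instead need a limiting argument, since $\tau$ and multiplication are $x$-adically continuous, landing the solution in a completion of $M_k$.)
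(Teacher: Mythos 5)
Your proof is correct and is exactly the argument the paper intends: the corollary is stated there without proof as an immediate consequence of Lemma \ref{c: tau applied to the A's}, and your induction on depth via the recursion $w_b=\tau\bigl(b(k)\,w_{b^{tr}}\bigr)$, with the observation that $b(k)w_{b^{tr}}\in xA_{k-1}$, is precisely how that consequence is drawn. Your parenthetical caveat distinguishing polynomial-valued from power-series-valued $\IF$-data is a fair remark about the paper's definitions but does not affect the verdict.
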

		\begin{lemma}
			\label{l: sigma on M_n}
			We have $\sigma(\widehat{M}_k) \subset \widehat{M}_k$.
		\end{lemma}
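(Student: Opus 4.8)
The statement to prove is $\sigma(\widehat{M}_k) \subset \widehat{M}_k$. Since $\widehat{M}_k$ is the $p$-adic completion of $M_k$ and $\sigma$ is continuous for the $p$-adic topology (it is $\Z_p$-linear and sends $\Z_q\powerseries{x}\langle x^{-1}\rangle$ to itself), it suffices to show $\sigma(M_k) \subset \widehat{M}_k$, and in fact it is enough to handle generators: for every $\IF$-datum $b$ with $\dep{b}\le k$ I must show $\sigma(w_b) \in \widehat{M}_k$, because $\sigma$ acts on the coefficient ring $\Z_q(x)$ and $\sigma(\Z_q(x)) \subset \Z_q(x) \subset \widehat{M}_0$, so $\sigma\!\left(\sum a_b w_b\right) = \sum \sigma(a_b)\,\sigma(w_b)$ and the $\sigma(a_b)$ are harmless. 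So the whole problem reduces to: $\sigma(w_b) \in \widehat{M}_{\dep{b}}$.

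First I would record the basic relation between $\sigma$ and $\tau$: from the defining equation $w_b^\sigma - w_b = b(k)\,w_{b^{tr}}$ (which is exactly the characterization of $\tau$) we get
\[
\sigma(w_b) = w_b + b(k)\, w_{b^{tr}}.
\]
This is the key identity. Now I would proceed by induction on $\dep{b}=k$. The base case $k=0$ is trivial since $w_b = 1$ and $\sigma(1)=1 \in M_0$. For the inductive step, the identity above expresses $\sigma(w_b)$ as $w_b$ — which lies in $M_k$ by definition — plus $b(k)\, w_{b^{tr}}$, where $w_{b^{tr}} \in M_{k-1} \subset M_k$. The only subtlety is the factor $b(k)$: an $\IF$-datum takes values in $x\Z_q\powerseries{x}$, which need not lie in $\Z_q(x)$, so $b(k)\,w_{b^{tr}}$ is a priori only an element of $\Z_q\powerseries{x}\langle x^{-1}\rangle$, not visibly of $M_{k-1}$. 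This is where the $p$-adic completion is essential and where I expect the main (modest) obstacle to lie.

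To resolve this I would approximate $b(k)$ $p$-adically by elements of $\Z_q[x]$: write $b(k) \equiv P_n \pmod{p^n}$ with $P_n \in x\Z_q[x]$. Then $P_n\, w_{b^{tr}} \in A_{k-1} \subset M_{k-1}$ (using $A_{k-1}\subset M_{k-1}$, or more carefully: multiplication by a polynomial in $x$ preserves each $A_j$ since the coefficients of an element of $A_j$ lie in $\Z_q[x]$), and $b(k)\,w_{b^{tr}} - P_n\, w_{b^{tr}} \in p^n \Z_q\powerseries{x}\langle x^{-1}\rangle$. Hence $b(k)\, w_{b^{tr}}$ is a $p$-adic limit of elements of $M_{k-1}$, so it lies in $\widehat{M}_{k-1} \subset \widehat{M}_k$. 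Combining, $\sigma(w_b) = w_b + b(k)\,w_{b^{tr}} \in M_k + \widehat{M}_k = \widehat{M}_k$, completing the induction. Finally, extending by $\sigma$-semilinearity over $\Z_q(x)$ and passing to the $p$-adic completion via continuity gives $\sigma(\widehat{M}_k)\subset\widehat{M}_k$. The argument is short; the one point requiring care is the bookkeeping that multiplication by $x$-polynomials stabilizes the $A_j$'s and that the completion genuinely absorbs the non-rational factor $b(k)$.
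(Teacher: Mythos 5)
Your proof is correct and follows essentially the same route as the paper: reduce to the generators $w_b$ and use the identity $w_b^\sigma = w_b + b(k)\,w_{b^{tr}}$, which exhibits $\sigma(w_b)$ inside $\widehat{M}_k$. The only difference is that you carefully justify absorbing the power-series factor $b(k)$ by $p$-adic approximation in the completion, a point the paper's one-line proof passes over silently.
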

		\begin{proof}
			It suffices to prove $w_b^\sigma \in M_k$ for any $\IF$-datum with $\dep{b}=k$. We have
			$w_b^\sigma = w_b + b(k)w_{b^{tr}}$, which is in $M_k$. 
		\end{proof}
		
		\begin{lemma}
			\label{l: differentiating M_n}
			We have $\frac{d}{dx}\widehat{M}_k \subset \widehat{M}_k$.
		\end{lemma}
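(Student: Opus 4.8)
The plan is to reduce the statement to an assertion about the uncompleted module $M_k$, and then to compute $\frac{d}{dx}w_b$ for an $\IF$-solution $w_b$ as a $p$-adically convergent Neumann series whose terms lie in $\widehat{M}_k$.

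First, since $\frac{d}{dx}$ carries $p^N\Z_q\powerseries{x}\langle x^{-1} \rangle$ into itself it is $p$-adically continuous, so it suffices to prove $\frac{d}{dx}M_k\subset\widehat{M}_k$. Recall (Lemma \ref{c: tau applied to the A's} and Corollary \ref{c: IF-solutions are in M_n}) that $M_k$ is generated over $\Z_q(x)$ by the $p$-primary $\IF$-solutions $w_b$ with $\dep{b}\le k$. Since $\widehat{M}_k$ is a $\Z_q(x)$-module and $\frac{d}{dx}$ satisfies the Leibniz rule with $\frac{d}{dx}\Z_q(x)\subset\Z_q(x)$, it is enough to show $\frac{d}{dx}w_b\in\widehat{M}_k$ for every $p$-primary $\IF$-datum $b$ with $\dep{b}\le k$.

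Now induct on $k=\dep{b}$, the case $k=0$ being trivial since $w_b=1$. For $k>0$, differentiate the identity $w_b^\sigma-w_b=b(k)w_{b^{tr}}$ from the proof of Lemma \ref{l: sigma on M_n}; using $\frac{d}{dx}(g^\sigma)=px^{p-1}(\frac{d}{dx}g)^\sigma$ this yields
\[ (1-px^{p-1}\sigma)\bigl(\tfrac{d}{dx}w_b\bigr)\;=\;-\,\tfrac{d}{dx}\bigl(b(k)\,w_{b^{tr}}\bigr), \]
where $px^{p-1}\sigma$ denotes the $\Z_q$-linear operator $g\mapsto px^{p-1}g^\sigma$. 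This operator maps $\widehat{M}_k$ into itself (by Lemma \ref{l: sigma on M_n}, together with the fact that multiplication by the polynomial $px^{p-1}$ preserves $M_k$) and strictly raises $p$-adic valuation, so $1-px^{p-1}\sigma$ is invertible on $\widehat{M}_k$ and on the ambient complete ring, with inverse $\sum_{n\ge 0}(px^{p-1}\sigma)^n$, the $n$-th summand sending $g$ to $p^nx^{p^n-1}g^{\sigma^n}$. Hence $\frac{d}{dx}w_b=-\sum_{n\ge0}(px^{p-1}\sigma)^n\bigl(\tfrac{d}{dx}(b(k)w_{b^{tr}})\bigr)$, which lies in $\widehat{M}_k$ as soon as $\frac{d}{dx}(b(k)w_{b^{tr}})\in\widehat{M}_k$. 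Since $b$ is $p$-primary we have $b(k)=\zeta^{p^j}x^i\in\Z_q[x]$, so
\[ \tfrac{d}{dx}\bigl(b(k)w_{b^{tr}}\bigr)\;=\;i\,\zeta^{p^j}x^{i-1}w_{b^{tr}}\;+\;\zeta^{p^j}x^i\,\tfrac{d}{dx}w_{b^{tr}}; \]
the first term lies in $\Z_q[x]\cdot M_{k-1}\subset M_k$ and the second in $\Z_q[x]\cdot\widehat{M}_{k-1}\subset\widehat{M}_k$ by the inductive hypothesis (multiplication by a polynomial preserves each $\widehat{M}_j$), which closes the induction.

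The delicate point is the simultaneous interaction of the two completions: one must verify carefully that $1-px^{p-1}\sigma$ is genuinely bijective on $\widehat{M}_k$, so that the Neumann series converges there and really recovers the true derivative $\frac{d}{dx}w_b$ (via uniqueness of solutions of this $\sigma$-linear equation in $\Z_q\powerseries{x}\langle x^{-1} \rangle$). One must also treat the reduction to $p$-primary data with some care: for a non-$p$-primary $\IF$-datum the factor $b(k)$ is merely a power series, and multiplication by an arbitrary power series does not preserve $\widehat{M}_k$, so in that case one should argue through the spanning statement for $M_k$ (i.e. feed in Lemma \ref{c: tau applied to the A's}) rather than differentiate $w_b$ for such $b$ directly.
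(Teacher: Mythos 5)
Your proof is correct and follows essentially the same route as the paper: differentiate the recursion $w_b^\sigma-w_b=b(k)w_{b^{tr}}$, solve for $w_b'$ via the $p$-adically convergent series $\sum_{n\ge 0}(px^{p-1}\sigma)^n$ applied to $\frac{d}{dx}(b(k)w_{b^{tr}})$, and conclude by induction together with Lemma \ref{l: sigma on M_n} and completeness of $\widehat{M}_k$. Your additional care (reducing to $p$-primary data so that $b(k)$ is a monomial, and checking that $1-px^{p-1}\sigma$ is invertible so the series really is $w_b'$) only makes explicit points the paper leaves implicit.
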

		\begin{proof}
			We induct on $k$. When $k=0$ the statement is immediate, since $\widehat{M}_0$ is
			the ring $\Z_q\langle x,x^{-1} \rangle$, which is preserved by $\frac{d}{dx}$.
			From the definitions and the Leibnitz rule, we reduce to showing
			$\frac{d}{dx}w_b \in \widehat{M}_k$ whenever $\dep{b}=k$. 
			Applying $\frac{d}{dx}$ to the equation $w_b^\sigma - w_b = b(k)w_{b^{tr}}$ and using the chain rule gives
			\[px^{p-1}(w_b')^\sigma  + w_b' = b(k)'w_{b^{tr}} + b(k)w_{b^{tr}}' .\]
			We deduce that 
			\begin{align}\label{eq l: differentiating M_k}
				w_b' &=  \cdot \sum_{n=0}^\infty p^nx^{p^{n+1}-1}( b(k)'w_{b^{tr}} + b(k)w_{b^{tr}}')^{\sigma^n}.
			\end{align}
			By our inductive hypothesis we know $w_{b^{tr}}' \in \widehat{M}_{k-1}$. Then from Lemma \ref{l: sigma on M_n} we see that each summand 
			 in \eqref{eq l: differentiating M_k} is in $\widehat{M}_{k-1}$. As the terms in the sum converge $p$-adically, the lemma follows.  
		\end{proof}
		\begin{lemma}
			\label{l: multiplying the Mns} 
				 For $k_1,k_2 \geq 0$ we have $\widehat{M}_{k_1}\widehat{M}_{k_2} \subset \widehat{M}_{k_1+k_2}$.
		\end{lemma}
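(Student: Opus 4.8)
The plan is to prove $\widehat{M}_{k_1}\widehat{M}_{k_2} \subset \widehat{M}_{k_1+k_2}$ by a double induction on $k_1+k_2$, using the multiplication identity \eqref{eq: tau multiplication} for $\tau$ as the key mechanism, together with Lemma \ref{c: tau applied to the A's} (or rather its consequence Corollary \ref{c: IF-solutions are in M_n}) to control depths. Since $\widehat{M}_{k}$ is the $p$-adic completion of a $\Z_q(x)$-span of $\IF$-solutions, and $p$-adic multiplication is continuous, it suffices to show that $w_{b_1} w_{b_2} \in \widehat{M}_{k_1+k_2}$ whenever $\dep{b_1}=k_1$ and $\dep{b_2}=k_2$. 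The base case $k_1=0$ is trivial, since $w_b=1$ for the unique depth-$0$ datum and $\widehat{M}_{k_2}$ is a $\Z_q(x)$-module (in fact a $\widehat{M}_0$-module, as $\widehat{M}_0 = \Z_q\langle x, x^{-1}\rangle$).

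The inductive step is where identity \eqref{eq: tau multiplication} does the work. Write $w_{b_1} = \tau(b_1(k_1) w_{b_1^{tr}})$ and $w_{b_2} = \tau(b_2(k_2) w_{b_2^{tr}})$, setting $f := b_1(k_1) w_{b_1^{tr}}$ and $g := b_2(k_2) w_{b_2^{tr}}$. Then
\begin{align*}
w_{b_1} w_{b_2} = \tau(f)\tau(g) = \tau\big(f\,\tau(g)\big) + \tau\big(g\,\tau(f)\big) - \tau(fg).
\end{align*}
Now I analyze each of the three terms. For $\tau(f\tau(g)) = \tau\big(b_1(k_1)\, w_{b_1^{tr}}\, w_{b_2}\big)$: by the inductive hypothesis (applied to depths $k_1-1$ and $k_2$, whose sum is smaller) we have $w_{b_1^{tr}} w_{b_2} \in \widehat{M}_{k_1+k_2-1}$, and multiplying by $b_1(k_1) \in x\widehat{M}_0$ keeps us in $\widehat{M}_{k_1+k_2-1}$; then $\tau$ of something in $x\widehat{M}_{k_1+k_2-1}$ lands in $\widehat{M}_{k_1+k_2}$ — this last claim is the completed, general-datum analogue of Lemma \ref{c: tau applied to the A's}, and I would either invoke it directly or re-derive it in the same $p$-adically completed setting (its proof is the same recursion via \eqref{eq: tau p power} plus $p$-adic convergence). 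The term $\tau(g\tau(f))$ is symmetric. The term $\tau(fg) = \tau\big(b_1(k_1)b_2(k_2)\, w_{b_1^{tr}} w_{b_2^{tr}}\big)$ uses the inductive hypothesis at depths $k_1-1$ and $k_2-1$, giving $w_{b_1^{tr}} w_{b_2^{tr}} \in \widehat{M}_{k_1+k_2-2}$, so the argument of $\tau$ lies in $x\widehat{M}_{k_1+k_2-2}$ and $\tau$ of it lies in $\widehat{M}_{k_1+k_2-1} \subset \widehat{M}_{k_1+k_2}$. Combining, $w_{b_1}w_{b_2} \in \widehat{M}_{k_1+k_2}$, closing the induction.

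The main obstacle I anticipate is not the combinatorics of \eqref{eq: tau multiplication} — that bookkeeping is routine — but rather making sure the "$\tau$ sends $x\widehat{M}_j$ into $\widehat{M}_{j+1}$" step is legitimate in the completed modules. Lemma \ref{c: tau applied to the A's} is stated only for the polynomial-coefficient spaces $A_k$ and without completion; to use it here I need to know that $\tau$ extends continuously to a map $x\widehat{M}_j \to \widehat{M}_{j+1}$, which follows because $\tau(f)=\sum_n f^{\sigma^n}$ converges $p$-adically when $f \in x\Z_q\powerseries{x}$, the partial sums of an element of $x\widehat{M}_j$ can be taken in $xA_j$ (rational coefficients cleared and approximated $p$-adically by polynomial ones, using that $\widehat{M}_0 = \Z_q\langle x,x^{-1}\rangle$ already contains inverses of suitable polynomials), and $\widehat{M}_{j+1}$ is closed. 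One should also check the harmless point that passing from $\Z_q(x)$-linear combinations to $p$-adic limits is compatible with all of the above, which is immediate from continuity of multiplication and of $\sigma$ (Lemma \ref{l: sigma on M_n}). Once these continuity points are pinned down, the statement follows exactly as sketched.
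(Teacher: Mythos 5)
Your skeleton is the same as the paper's: induct on $k_1+k_2$, expand $w_{b_1}w_{b_2}$ via \eqref{eq: tau multiplication}, and control the three resulting $\tau$-terms by the inductive hypothesis together with Lemma \ref{c: tau applied to the A's}. The genuine gap is in the step you yourself flag as the main obstacle, namely the claim that $\tau$ extends continuously to a map $x\widehat{M}_j \to \widehat{M}_{j+1}$ so that Lemma \ref{c: tau applied to the A's} can be applied inside the completed modules. Your justification has two problems. First, $\tau(f)=\sum_{n\geq 0} f^{\sigma^n}$ converges $x$-adically, not $p$-adically (the terms of $\tau(x)=x+x^p+x^{p^2}+\cdots$ do not tend to $0$ $p$-adically), so the continuity you invoke does not come from convergence of this series; $\tau$ is $p$-adically continuous where defined, but only because it is $\Z_p$-linear. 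Second, the proposed reduction by taking partial sums in $xA_j$ does not go through: the coefficients $a_b$ in $M_j$ are allowed to lie in $\Z_q(x)$, and an element such as $\frac{1}{1-x}\,w_b$ has $p$-adic distance $1$ from every $\Z_q[x,x^{-1}]$-linear combination of $\IF$-solutions, so it is not a $p$-adic limit of elements of $A_j[x^{-1}]$; and even for coefficients that can be so approximated, the approximants acquire negative powers of $x$, on which the series defining $\tau$ does not converge at all. So the ``completed, general-datum analogue'' of Lemma \ref{c: tau applied to the A's} that your induction rests on is not established by your argument.

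The paper avoids this entirely by reversing the order of operations, and that is the repair: run your induction to prove the uncompleted, polynomial-coefficient statement $A_{k_1}A_{k_2}\subset A_{k_1+k_2}$. There the inductive hypothesis presents $w_{b_1^{tr}}w_{b_2}$ and $w_{b_1^{tr}}w_{b_2^{tr}}$ as $\Z_q[x]$-linear combinations of $\IF$-solutions, so each of the three terms produced by \eqref{eq: tau multiplication} is handled by Lemma \ref{c: tau applied to the A's} as stated, and no extension of $\tau$ beyond its natural domain is ever needed. Only afterwards does one deduce $\widehat{M}_{k_1}\widehat{M}_{k_2}\subset\widehat{M}_{k_1+k_2}$: a product of elements of $M_{k_1}$ and $M_{k_2}$ is a $\Z_q(x)$-bilinear combination of products $w_{b_1}w_{b_2}\in A_{k_1+k_2}\subset M_{k_1+k_2}$, and the completed statement then follows from the $p$-adic continuity of multiplication and the closedness of $\widehat{M}_{k_1+k_2}$ in the ambient ring --- a passage that involves $\tau$ nowhere. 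Your ordering (complete first, then induct) is precisely what forces the unproved extension of $\tau$.
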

		\begin{proof}
			We will show $A_{k_1}A_{k_2} \subset A_{k_1+k_2}$. The lemma will follow by inverting $x$ and taking the $p$-adic completion.
			We proceed by induction on $k=k_1+k_2$. The case $k=0$ is immediate. Let $k>0$ and assume the result holds for $k-1$. Let $b_i$ be an $\IF$-datum of degree $k_i$. Then using \eqref{eq: tau multiplication}
			we have 
			\begin{align*}
				w_{b_1}w_{b_2} &=\tau(b_1(k_1)w_{b_1^{tr}}w_{b_2}) + \tau(b_2(k_2)w_{b_2^{tr}}w_{b_1}) - \tau(b_1(k_1)w_{b_1^{tr}}\tau(b_2(k_2)w_{b_2^{tr}}).
			\end{align*}
			By lemma follows from our inductive hypothesis and Lemma \ref{c: tau applied to the A's}.
		\end{proof}
	
		\subsection{$\IF$-solutions modulo $p$.}
		\label{ss: spaces of iterated Frobenius equations modulo p}
		We now study the space of $\IF$-solutions modulo $p$. 
		\begin{definition}
			Let $b$ be an $\IF$-datum. We define $\overline{w}_b$ to be the
			reduction of $w_b$ modulo $p$ in $\F_q\powerseries{x}$. 
		\end{definition}
		We define the spaces
		\begin{align*}
			\overline{M}_k &:= \left\{ \sum_{\substack{b \in D \\ \dep{b}\leq k}} a_b \overline{w}_b~|~ a_b \in \F_q(x) \right\} \\
			\overline{M}&= \bigcup_{n\geq 1} \overline{M}_k.
		\end{align*}
		Our main result of this section is the following proposition.
		
		\begin{proposition}
			\label{p: linear independence of the w's mod p}
			The elements $\overline{w}_b$, where $b$ ranges over $b \in D$, are linearly independent
			over $\F_q(x)$. In particular, the set $\{\overline{w}_b\}_{b \in D}$ is a basis of $\overline{M}$ over $\F_q(x)$.
		\end{proposition}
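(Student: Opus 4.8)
The plan is to work after reduction mod $p$, where the structure becomes that of an iterated Artin--Schreier tower, and to run a Galois--descent induction on depth. We prove the independence for $b$ ranging over $p$-primary $\IF$-data; that this suffices, together with the spanning assertion, follows from the reduction carried out in the proof of Lemma \ref{c: tau applied to the A's}, which expresses an arbitrary $\overline{w}_b$ as an $\F_q(x)$-combination of $p$-primary ones. The key first observation is that on $\F_q\powerseries{x}$ the reduction of $\sigma$ is the absolute Frobenius $g\mapsto g^p$, so the defining relations reduce to genuine Artin--Schreier equations $\overline{w}_b^{\,p}-\overline{w}_b=\overline{b(k)}\,\overline{w}_{b^{tr}}$ for $\dep{b}=k\ge 1$; in particular each $\overline{w}_b$ is separably algebraic over $\F_q(x)$. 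Given a finite set $S$ of $p$-primary $\IF$-data containing the depth-$0$ datum, let $E/\F_q(x)$ be a finite Galois extension containing $\{\overline{w}_b\}_{b\in S}$, put $G=\Gal(E/\F_q(x))$, so $E^G=\F_q(x)$, and for $\gamma\in X$ let $t_\gamma\colon G\to\F_p$ be the character with $\tau(\overline{w}_{(\gamma)})=\overline{w}_{(\gamma)}+t_\gamma(\tau)$, where $(\gamma)$ denotes the depth-$1$ datum with value $\gamma$.

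The first and main step is to describe the Galois action. Applying $\tau\in G$ to the Artin--Schreier equation for $b=(\beta_1,\dots,\beta_k)$ and subtracting the original shows that $\tau(\overline{w}_b)-\overline{w}_b$ solves $y^p-y=\overline{\beta_k}\bigl(\tau(\overline{w}_{b^{tr}})-\overline{w}_{b^{tr}}\bigr)$. Since $\overline{w}_{\cat(c,\beta)}$ solves $y^p-y=\overline{\beta}\,\overline{w}_c$ and solutions of an Artin--Schreier equation are unique up to an additive element of $\F_p=\F_p\cdot\overline{w}_{\emptyset}$, feeding in the inductive description of $\tau(\overline{w}_{b^{tr}})-\overline{w}_{b^{tr}}$ and solving term by term gives, by induction on $k$,
\[\tau(\overline{w}_b)-\overline{w}_b=\sum_{m=0}^{k-1}\mu_{b,m}(\tau)\,\overline{w}_{(\beta_{k-m+1},\dots,\beta_k)},\qquad \mu_{b,m}(\tau)\in\F_p,\]
an $\F_p$-linear combination of the $\overline{w}_{b'}$ over the proper suffixes $b'$ of $b$, in which the coefficient of the longest proper suffix $(\beta_2,\dots,\beta_k)$ is exactly $t_{\beta_1}(\tau)$. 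Thus $\tau-\mathrm{id}$ sends any $\F_q(x)$-linear relation among the $\overline{w}_b$ to one supported in strictly smaller depth.

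The base case is the depth-$1$ statement: the elements $\overline{\gamma}=\zeta^{p^j}x^i$ ($1\le j\le f$, $i\ge 1$, $p\nmid i$) are $\F_p$-linearly independent in $\F_q(x)$ modulo $\{g^p-g : g\in\F_q(x)\}$. Indeed, a nontrivial $\F_p$-combination is, after collecting monomials and using that $\zeta,\zeta^p,\dots,\zeta^{p^{f-1}}$ is an $\F_p$-basis of $\F_q$, a nonzero polynomial whose degree is prime to $p$; such a rational function has a pole of order prime to $p$ at $\infty$ and no other poles, hence cannot have the form $g^p-g$. Equivalently, the characters $\{t_\gamma\}_{\gamma\in X}$ are $\F_p$-linearly independent on $G$, and therefore linearly independent over $\F_q(x)$ as functions $G\to\F_q(x)$ (since $\F_p$-independent maps into $\F_p$ stay independent after extending scalars to $\F_q(x)$).

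Finally, induct on the maximal depth $N$ of the data occurring in a putative relation $\sum_{b\in S}a_b\overline{w}_b=0$ with $a_b\in\F_q(x)$; the case $N=0$ is immediate. Applying $\tau-\mathrm{id}$ and collecting coefficients, the first step gives $\sum_{b'}A_{b'}(\tau)\,\overline{w}_{b'}=0$ with all $b'$ of depth $\le N-1$, where $A_{b'}(\tau)$ is assembled from the $a_b$ and the $\mu_{b,m}(\tau)$; by the inductive hypothesis every $A_{b'}(\tau)$ vanishes, for all $\tau\in G$. Taking $b'$ to be the longest proper suffix of a depth-$N$ datum of $S$, the identity $A_{b'}\equiv 0$ reads $\sum_{\gamma}a_b\,t_\gamma(\tau)=0$ for all $\tau\in G$, where the sum runs over those $b\in S$ of depth $N$ whose last $N-1$ entries form $b'$ and whose first entry is $\gamma$; by the base case each such $a_b$ is zero. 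Ranging over all such suffixes $b'$ annihilates every depth-$N$ coefficient, so the relation has depth $\le N-1$ and the inductive hypothesis concludes. The hard part is the first step: establishing that $\tau(\overline{w}_b)-\overline{w}_b$ drops the depth and that its leading coefficient is the nondegenerate Artin--Schreier symbol $t_{\beta_1}$; once this is in hand, the rest is bookkeeping plus the elementary pole-order computation at $\infty$.
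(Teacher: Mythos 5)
Your proof is correct (with the same implicit restriction the paper makes: the argument is really about $p$-primary $\IF$-data, which is the meaningful content, since over all of $D$ independence fails trivially, e.g.\ $\tau(2x)=2\tau(x)$), but it takes a genuinely different route. The paper works entirely inside $\F_q\LS{x}$: it orders the $\IF$-data, takes a relation with minimal leading datum, applies $F-1$ (absolute Frobenius minus identity) to produce a smaller relation via $\overline{w}_b^p=\overline{w}_b+\overline{b(k)}\,\overline{w}_{b^{tr}}$, and then splits into cases according to whether the coefficients lie in $\F_p$, reducing in both cases to the fact that $y^p-y=f$ has no solution in $\F_q(x)$ when $f$ is a nonzero $\F_p$-combination of the $x_j^i$. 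You instead pass to a finite Galois extension $E$ and replace the Frobenius operator by the Galois action: the computation that $\tau(\overline{w}_b)-\overline{w}_b$ is an $\F_p$-combination of $\overline{w}_{b'}$ over the proper suffixes $b'$ of $b$, with leading coefficient the Artin--Schreier character $t_{b(1)}(\tau)$, is sound (a clean induction using uniqueness of Artin--Schreier roots up to $\F_p$), and the depth induction via $\tau-\mathrm{id}$, together with the observation that $\F_p$-independent $\F_p$-valued functions on $G$ stay independent over $\F_q(x)$, correctly kills the top-depth coefficients without any case split on the coefficients. Both arguments ultimately rest on the same arithmetic input --- a nonzero $\F_p$-combination of the $\zeta^{p^j}x^i$ with $p\nmid i$ is not of the form $g^p-g$ in $\F_q(x)$ --- which the paper merely asserts and you actually prove via the pole-order-at-infinity argument; your route buys an explicit description of the Galois action on the tower (and of the characters $t_\gamma$), at the cost of first noting that each $\overline{w}_b$ is separably algebraic over $\F_q(x)$. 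Two small housekeeping points: take $S$ suffix-closed (or enlarge $E$) so that every $\overline{w}_{b'}$ appearing in $\tau(\overline{w}_b)-\overline{w}_b$ lies in $E$, and note that the reduction to $p$-primary data via Lemma \ref{c: tau applied to the A's} is literally available for polynomial-valued data (by $\Z_p$-linearity of $\tau$); this is the same level of care as the paper itself, whose proof likewise treats only data valued in $X$.
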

		\begin{proof}
			It will be useful for us to introduce an ordering on $D$. We set
			$b_1>b_2$ if $\dep{b_1}>\dep{b_2}$ and for $\IF$-data with
			the same depth $k$ we use the lexicographical ordering on $(b(k), \dots, b(1))$.
			Consider $c=\{c_b\}_{b\in B}$ with $c_b \in \F_q(x)$, where almost all
			the $c_b=0$. We define $\max(c)$ to be the largest $b$ such that $c_b \neq 0$. 
			Assume there exists such a $c$ such that
			\begin{align}\label{eq: linear independence eq}
				\sum_{b \in B} c_b \overline{w}_b &= 0.
			\end{align}
			We may further assume that $\max(c)$ is minimal in the sense that for any other $c'=\{c_b'\}_{b \in B}$ where $\max(c')<\max(c)$ the corresponding sum $\sum c_b' \overline{w}_b$ is nonzero. Set $b_{max}=\max(c)$ and $k_{max}=\dep{b_{max}}$. By multiplying \eqref{eq: linear independence eq} with a constant we may assume $c_{b_{max}}=1$. 
			
			Let $F$ be the $p$-th power Frobenius. For $b\in D$ with $\dep{b}=k$ we have
			\[(F-1)c_b\overline{w}_b = c_b^p\overline{w}_b^p - c_b\overline{w}_b = (c_b^p-c_b)\overline{w}_b + c_b^pb(k)\overline{w}_{b^{tr}}.\]
			Thus, if we $F-1$
			to \eqref{eq: linear independence eq} we get
			\begin{align*}
				\sum_{b \in D} d_b \overline{w}_b &= 0,
			\end{align*} 
			where for $b$ of depth $k$ we have
			\begin{align*}
			d_b&:= (c_b^p-c_b)  + \sum_{\substack{b_0 \in D \\ b_0^{tr} = b}} c_{b_0}b_0(k+1).
			\end{align*}
			It is clear that $d_{b_{max}}=0$ and $\max(d)<\max(c)$. Thus, we are reduced
			to show that not all the $d_b$'s are zero, as this will contradict our minimality condition on $c$.
			
			First assume each $c_b \in \F_p$. Then we have
			\begin{align*}
				d_{b_{max}^{tr}} &= \sum_{\substack{b \in D \\ b^{tr} = b_{max}^{tr} }} c_{b}b(k_{max}),
			\end{align*}
			where the sum on the right is over all $b \in D$ that have the same truncation as $b^{tr}$.
			The terms $b_0(k_{max})$ are all distinct and of the form $x_j^i$. In particular, they are linearly independent over $\F_p$. Not all of the $c_b$'s are equal to zero, since we know $c_{b_{max}}\neq 0$, so we see that $d_{b_{max}^{tr}} \neq 0$. Next, assume that
			not all the $c_b$'s are in $\F_p$. Let $b_0$ be the largest $\IF$-datum such that
			$c_{b_0} \not \in \F_p$ and let $k_0=\dep{b_0}$. If $d_{b_0}=0$ we have
			\begin{align*}
				c_{b_0}^p-c_{b_0} = - \sum_{\substack{b \in D \\ b^{tr} = b_0}} c_{b}b(k_{0}+1). 
			\end{align*}
			We know that the left side is nonzero. In particular, the right side cannot be zero. However, the right side is an $\F_p$-linear combination of terms of the form $x_j^i=\zeta^{p^j}x^i$, where $p\nmid i$. In particular, the right side is a polynomial $f(x) \in \F_q[x]$ whose degree is coprime to $p$. However, for such an $f$, the equation $y^p-y=f(x)$ cannot have a solution in $\F_q(x)$, which gives a contradiction. 
		\end{proof}
		
		\begin{corollary}
			\label{c: the space M has no weird congruences}
			For all $k,m\geq0$ we have $p^m\Z_q\LS{x} \cap \widehat{M}_k = p^m\widehat{M}_k$.
			In particular, let $x,y \in \widehat{M}_{k+1}$. If $p^m x \equiv p^m y \mod \widehat{M}_k$
			then $x \equiv y \mod \widehat{M}_k$. 
		\end{corollary}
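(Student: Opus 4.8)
The plan is to prove that $\widehat M_k$ is $p$-saturated in the ambient ring $R:=\Z_q\powerseries{x}\langle x^{-1}\rangle$, i.e.\ that $R/\widehat M_k$ has no $p$-torsion. This is equivalent to the case $m=1$ of the first assertion, namely $pR\cap\widehat M_k=p\widehat M_k$, which in turn says exactly that the reduction map $\widehat M_k/p\widehat M_k\to R/pR$ is injective. Granting this, the case of general $m$ follows by an immediate induction: if $z\in p^{m+1}R\cap\widehat M_k$ then $z\in pR\cap\widehat M_k=p\widehat M_k$, say $z=pz'$ with $z'\in\widehat M_k$; since $R$ is $p$-torsion-free and $z\in p^{m+1}R$ we get $z'\in p^mR$, hence $z'\in p^mR\cap\widehat M_k=p^m\widehat M_k$ by induction and $z\in p^{m+1}\widehat M_k$. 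The ``in particular'' clause also drops out: $p^m(x-y)$ lies in $p^mR\cap\widehat M_k=p^m\widehat M_k$, and cancelling $p^m$ (legitimate as $R$ has no $p$-torsion) gives $x-y\in\widehat M_k$.

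So everything reduces to injectivity of $\widehat M_k/p\widehat M_k\to R/pR$. First I would record that $\{w_b\}_{\dep b\le k}$ is linearly independent over $\Z_q(x)$, hence a free basis of $M_k$: a nontrivial relation $\sum a_b w_b=0$ could, after clearing denominators into $\Z_q[x]$ and dividing through by the largest power of $p$ dividing all the coefficients, be reduced modulo $p$ to a nontrivial $\F_q[x]$-linear relation among the $\overline w_b$, contradicting Proposition~\ref{p: linear independence of the w's mod p}. The same manipulation shows $p^nR\cap M_k=p^nM_k$ for all $n$, so the inclusion $M_k\hookrightarrow R$ remains injective after reduction modulo every $p^n$; consequently the $p$-adic completion $\widehat M_k$ (the closure of $M_k$ in $R$) is canonically the abstract completion $\varprojlim_n M_k/p^nM_k$, and in particular $\widehat M_k/p\widehat M_k\cong M_k/pM_k$.

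It then remains to note that $M_k/pM_k$ is the free module $\bigoplus_{\dep b\le k}(\Z_q(x)/p)\,\overline w_b$, and that under this identification the map to $R/pR=\F_q\LS{x}$ sends a class $\sum\bar a_b\,\overline w_b$ — a \emph{finite} combination, since elements of $M_k$ are finite combinations of the $w_b$ — to the corresponding $\F_q(x)$-combination of the $\overline w_b\in\F_q\powerseries{x}$; since $\Z_q(x)/p$ embeds in $\F_q(x)$, injectivity of this map is precisely Proposition~\ref{p: linear independence of the w's mod p}. The only point needing care is the bookkeeping around the $p$-adic completion — that $\widehat M_k$ really is the abstract completion and that reduction modulo $p$ returns a finite $\F_q(x)$-combination of the $\overline w_b$ rather than an infinite $p$-adically convergent one — but this is forced by the $p$-adic decay built into $\widehat M_k$ together with the linear independence already extracted from Proposition~\ref{p: linear independence of the w's mod p}; so the real content sits entirely in that proposition, and the corollary is a formal consequence.
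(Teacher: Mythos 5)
Your argument is correct and is essentially the paper's (implicit) one: the paper states this corollary without proof as an immediate consequence of Proposition~\ref{p: linear independence of the w's mod p}, and your derivation---linear independence of the $\overline{w}_b$ over $\F_q(x)$ gives $p^n\Z_q\LS{x}\cap M_k=p^nM_k$, which then passes to the $p$-adic closure $\widehat{M}_k$ and yields the general $m$ and the cancellation statement by $p$-torsion-freeness of the ambient ring---is just the natural fleshing out of that. The completion bookkeeping you flag does go through exactly as you indicate, using $p^nR\cap M_k=p^nM_k$ together with torsion-freeness, so the real content indeed sits entirely in the Proposition.
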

		
		In light of Proposition \ref{p: linear independence of the w's mod p} we make the following definitions.
		\begin{definition}
			Let $f \in \overline{M}$ and write $f = \sum a_b \overline{b}$, where almost all $a_b=0$. We define the depth
			of $f$, written as $\dep{f}$, to be the largest $k$ such that for some $b$ with $\dep{b}=k$ we have $a_b \neq 0$. In particular the depth of $f \in \overline{M}$ is the unique $k\geq 0$ such that $f \in \overline{M}_k$ and $f \not \in \overline{M}_{k-1}$. 
		\end{definition}
		
		\begin{corollary}\label{c: linear independence for different depths}
			Let $\alpha_1,\alpha_2, \dots$ be a sequence of elements in $\overline{M}$ with $\dep{\alpha_k}=k$ for each $k$. Then
			the $\alpha_k$'s are linearly independent over $\F_p(x)$.
		\end{corollary}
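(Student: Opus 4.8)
The plan is to read this off directly from Proposition \ref{p: linear independence of the w's mod p} via a standard leading-term argument, with one preliminary reduction. Since $\F_p(x) \subseteq \F_q(x)$, every nontrivial $\F_p(x)$-linear relation among the $\alpha_k$ is a fortiori a nontrivial $\F_q(x)$-linear relation; hence it suffices to prove that the $\alpha_k$ are linearly independent over $\F_q(x)$. Recall also that, by definition, $\dep{\alpha_k}=k$ means precisely that $\alpha_k \in \overline{M}_k$ but $\alpha_k \notin \overline{M}_{k-1}$, and that Proposition \ref{p: linear independence of the w's mod p} exhibits each $\overline{M}_j$ as the $\F_q(x)$-span of the basis vectors $\overline{w}_b$ with $\dep{b}\le j$, so the $\overline{M}_j$ form a strictly increasing filtration of $\overline{M}$ by $\F_q(x)$-subspaces.

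Now suppose toward a contradiction that there is a finite relation $\sum_k c_k \alpha_k = 0$ with $c_k \in \F_q(x)$ not all zero, and let $K$ be the largest index with $c_K \neq 0$. For every $k < K$ we have $\alpha_k \in \overline{M}_k \subseteq \overline{M}_{K-1}$, so $c_K \alpha_K = -\sum_{k<K} c_k \alpha_k \in \overline{M}_{K-1}$. Since $\overline{M}_{K-1}$ is a vector space over the field $\F_q(x)$ and $c_K \neq 0$, dividing by $c_K$ gives $\alpha_K \in \overline{M}_{K-1}$, contradicting $\dep{\alpha_K}=K$. If one prefers an even more hands-on version: expand each $\alpha_k$ in the basis $\{\overline{w}_b\}$, choose $b_0$ with $\dep{b_0}=K$ and nonzero coefficient in $\alpha_K$ (possible because $\dep{\alpha_K}=K$), and compare coefficients of $\overline{w}_{b_0}$ on both sides of $\sum_k c_k\alpha_k = 0$; only the $k=K$ summand contributes a $\overline{w}_{b_0}$-term, forcing $c_K=0$.

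There is essentially no obstacle here: the entire content sits in Proposition \ref{p: linear independence of the w's mod p}, which simultaneously makes the notion of depth well defined and guarantees that the $\overline{M}_j$ form an honest increasing chain of subspaces. The corollary is then just the familiar fact that a sequence of vectors whose positions in such a filtration are all distinct is automatically linearly independent.
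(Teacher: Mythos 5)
Your argument is correct and is exactly the intended one: the paper leaves this corollary unproved because it is the standard leading-index/filtration consequence of Proposition \ref{p: linear independence of the w's mod p}, which is precisely what you carry out (including the harmless passage from $\F_p(x)$ to $\F_q(x)$ coefficients). Nothing further is needed.
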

		
		\begin{definition}\label{def: dual basis}
			Let $\{\dot{w}_b\}_{b \in D}$ be the basis of $\Hom_{\F_p\LS{x}}(\overline{M},\F_p\LS{x})$ that is dual to 
			$\{\overline{w}_b\}_{b \in D}$. That is, for $b,b' \in D$ we have $\dot{w}_b(\overline{w}_{b'})$ is one if $w=w'$ and
			zero otherwise.			
		\end{definition}
	
		\subsection{Products of $\IF$-solutions} \label{ss: Products of solutions to recursive Frobenius equations}for the the product $w_{b_1}\dots w_{b_r}$
		in terms of the $\IF$-data $b_1,\dots,b_r$.

		\begin{definition} \label{def: merging function}
			
			Let $k_1,\dots,k_r\geq 1$ and set $k=k_1+\dots+k_r$. We define
			\begin{align*}
				[k_1,\dots,k_r] := \left\{(i,j) \middle| \begin{array}{c}1 \leq i \leq r \\ j \in [k_i]
				\end{array}\right\}.
			\end{align*}
			A splicing function is a bijection $\eta: [k] \to [k_1,\dots,k_r]$ such that 
			for any $1 \leq j_1<j_2 \leq k_i$ we have $\eta^{-1}(i,j_1)<\eta^{-1}(i,j_2)$. 
			We let $C(k_1,\dots,k_r)$ denote the set of all splicing functions. Note that
			$C(k_1,\dots,k_r)$ has cardinality $\binom{k}{k_1,\dots,k_r}$.
		\end{definition}
	
		\begin{definition} \label{def: splicing IF datum}
			Let $b_1,\dots,b_r$ be $\IF$-data with $\dep{b_i}=k_i$. For $\eta \in S(k_1,\dots,k_r)$
			we define the splicing of $b_1,\dots,b_r$ according to $\eta$ to be the depth $k$ $\IF$-datum defined by the composition
			\[b_{\eta}: [k] \xrightarrow{\eta} [k_1,\dots,k_r] \xrightarrow{(i,j) \mapsto b_i(j)} x\F_q\powerseries{x}.\]
			We let $S(b_1,\dots,b_r)$ denote the multiset of all possible splicings of $b_1,\dots,b_r$. That is,
			\begin{align*}
				S(b_1,\dots,b_r) := \left\{b_{\eta}~\middle|~ \eta \in C(k_1,\dots,k_r) \right\}.
			\end{align*}
			\end{definition}
		The following lemma follows immediately from these definitions.
		\begin{lemma}
			\label{l: merging set}
			Let $b_i$ and $k_i$ be as in Definition \ref{def: splicing IF datum}. Then
			\begin{align*}
				S(b_1,\dots,b_r) &=\bigsqcup_{i=1}^r \left\{ \cat(b_\eta, b_i(k_i))~\middle | ~ \eta \in S(b_1,\dots,b_{i-1},b_i^{tr}, b_{i+1}, \dots, b_r) \right\}.
			\end{align*}
		\end{lemma}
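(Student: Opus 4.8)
The plan is to verify the identity directly at the level of multisets, by classifying each splicing function according to the value it takes at the top index $k = k_1+\dots+k_r$. Throughout, write $\dep{b_i}=k_i$, so that $k_i\geq 1$; when one of the truncations $b_i^{tr}$ has depth $0$ (that is, $k_i=1$) one extends the definitions of $[k_1,\dots,k_r]$, $C(\cdot)$ and $S(\cdot)$ in the obvious way, letting the empty block $[k_i]=[0]$ contribute nothing. Recall that $S(b_1,\dots,b_r)$ is the multiset image of $C(k_1,\dots,k_r)$ under $\eta\mapsto b_\eta$.

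First I would show that for every $\eta\in C(k_1,\dots,k_r)$ with $k\geq 1$ there is a unique index $i$ with $\eta(k)=(i,k_i)$. Writing $\eta(k)=(i,j)$, the first coordinate $i$ is already determined; and if we had $j<k_i$, then the defining property of a splicing function would force $\eta^{-1}(i,j)<\eta^{-1}(i,j+1)$, which is impossible since $\eta^{-1}(i,j)=k$ is the largest element of $[k]$. Hence $j=k_i$. This partitions $C(k_1,\dots,k_r)$ as $\bigsqcup_{i=1}^r C^{(i)}$, where $C^{(i)}=\{\eta : \eta(k)=(i,k_i)\}$.

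Next I would exhibit, for each $i$, a bijection between $C^{(i)}$ and $C(k_1,\dots,k_{i-1},k_i-1,k_{i+1},\dots,k_r)$, namely $\eta\mapsto \eta|_{[k-1]}$, after identifying $[k_1,\dots,k_r]\setminus\{(i,k_i)\}$ with $[k_1,\dots,k_i-1,\dots,k_r]$ in the evident way. The splicing condition for $\eta$ restricts to the splicing condition for $\eta|_{[k-1]}$, and conversely any splicing function of the shrunken tuple extends uniquely to an element of $C^{(i)}$ by declaring its value at $k$ to be $(i,k_i)$. Under this identification one checks entrywise that $b_\eta=\cat(b_{\eta|_{[k-1]}},b_i(k_i))$: for $m\leq k-1$ we have $b_\eta(m)=b_{\eta|_{[k-1]}}(m)$ because the only block whose datum is altered is the $i$-th, where $b_i^{tr}$ and $b_i$ agree in degrees $\leq k_i-1$; and $b_\eta(k)=b_i(k_i)$ by construction. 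Moreover $b_{\eta|_{[k-1]}}$ is exactly the splicing of $b_1,\dots,b_{i-1},b_i^{tr},b_{i+1},\dots,b_r$ associated to $\eta|_{[k-1]}$.

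Putting these together, $S(b_1,\dots,b_r)$ is the multiset image of $\bigsqcup_i C^{(i)}$ under $\eta\mapsto b_\eta$, which by the previous step equals $\bigsqcup_{i=1}^r\{\cat(b_\eta,b_i(k_i)) : \eta\in S(b_1,\dots,b_{i-1},b_i^{tr},b_{i+1},\dots,b_r)\}$, as claimed. I do not expect any genuine obstacle here: the only points demanding care are the multiset bookkeeping—each $\eta$ lands in exactly one $C^{(i)}$, so the union on the right is genuinely disjoint and multiplicities are preserved—and the degenerate case $k_i=1$, handled by the convention above (the case $k=0$, which forces $r=0$, being vacuous). As a consistency check, passing to cardinalities recovers the multinomial recursion $\binom{k}{k_1,\dots,k_r}=\sum_{i=1}^r\binom{k-1}{k_1,\dots,k_i-1,\dots,k_r}$.
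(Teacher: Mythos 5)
Your proof is correct: classifying each splicing function by its value at the top index $k$ (which must be $(i,k_i)$ for a unique $i$) and restricting to $[k-1]$ is exactly the verification the paper has in mind when it asserts that the lemma ``follows immediately from these definitions'' and gives no proof. Your attention to the multiset bookkeeping and to the degenerate case $k_i=1$ simply fills in details the paper leaves implicit.
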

	
		\begin{lemma}\label{l: preliminary multiplying lemma}
			Continuing with the same notation, we define 
			\[ \mathcal{B}_r := \prod_{i=1}^r w_{b_i}~~ \text{ and } ~~\mathcal{B}_r^{(j)} := w_{b_j^{tr}} \prod_{\substack{1 \leq i \leq r \\ i \neq j}} w_{b_i}.\]
			Then we have
			\begin{align*}
				\mathcal{B}_r &=\sum_{i=1}^r \tau\left (b_i(k_i) \mathcal{B}_r^{(i)} \right) \mod \widehat{M}_{k-1}.
			\end{align*}
		\end{lemma}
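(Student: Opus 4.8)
The plan is to induct on $r$, using the fundamental multiplication equation \eqref{eq: tau multiplication} for $\tau$ together with Lemma \ref{l: multiplying the Mns} to control error terms modulo $\widehat{M}_{k-1}$. The base case $r=1$ is the defining recursion $w_{b_1}=\tau(b_1(k_1)w_{b_1^{tr}})$, which is exactly the asserted formula (with no error term). For the inductive step I would write $\mathcal{B}_r = w_{b_r}\cdot \mathcal{B}_{r-1}'$, where $\mathcal{B}_{r-1}' = \prod_{i<r}w_{b_i}$, and expand $w_{b_r}=\tau(b_r(k_r)w_{b_r^{tr}})$. The key move is to apply \eqref{eq: tau multiplication} with $f = b_r(k_r)w_{b_r^{tr}}$ and $g$ chosen so that $\tau(g) = \mathcal{B}_{r-1}'$ — but $\mathcal{B}_{r-1}'$ is not literally in the image of $\tau$, so instead I would expand $\mathcal{B}_{r-1}'$ itself via the inductive hypothesis as $\sum_{i<r}\tau(b_i(k_i)\mathcal{B}_{r-1}'^{(i)})$ modulo $\widehat{M}_{k-k_r-1}$, then apply \eqref{eq: tau multiplication} termwise to each product $\tau(b_r(k_r)w_{b_r^{tr}})\cdot\tau(b_i(k_i)\mathcal{B}_{r-1}'^{(i)})$.

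Carrying this out: \eqref{eq: tau multiplication} turns each such product of two $\tau$'s into three $\tau$-terms. I would track which of these three terms survive modulo $\widehat{M}_{k-1}$ and which do not. Two of the three terms will be of the form $\tau(b_r(k_r)w_{b_r^{tr}}\cdot\tau(\dots))$ and $\tau(b_i(k_i)\dots\cdot\tau(b_r(k_r)w_{b_r^{tr}}))=\tau(b_i(k_i)\dots w_{b_r})$, which reassemble — after summing over $i$ and recognizing $w_{b_r^{tr}}\tau(\text{stuff})$ and $w_{b_r}$ inside — into $\tau(b_r(k_r)\mathcal{B}_r^{(r)}) + \sum_{i<r}\tau(b_i(k_i)\mathcal{B}_r^{(i)})$, which is the desired right-hand side. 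The third term, $-\tau(b_r(k_r)w_{b_r^{tr}}\cdot b_i(k_i)\dots)$, lies in $\widehat{M}_{k-1}$: indeed $b_r(k_r)w_{b_r^{tr}}\in x\widehat{M}_{k_r-1}$ and $b_i(k_i)\mathcal{B}_{r-1}'^{(i)}\in x\widehat{M}_{k-k_r-1}$, so by Lemma \ref{l: multiplying the Mns} their product lies in $x\widehat{M}_{k-2}$, and then $\tau$ of something in $x\widehat{M}_{k-2}$ lies in $\widehat{M}_{k-1}$ by (the $p$-adically completed form of) Lemma \ref{c: tau applied to the A's}. I also need to check that replacing $\mathcal{B}_{r-1}'$ by its expansion introduces only an error in $\widehat{M}_{k-1}$ after multiplying by $w_{b_r}\in\widehat{M}_{k_r}$: the error is in $\widehat{M}_{k-k_r-1}$, so the product is in $\widehat{M}_{k-1}$ by Lemma \ref{l: multiplying the Mns}, as needed.

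The main obstacle I anticipate is bookkeeping rather than conceptual difficulty: namely, verifying that after applying \eqref{eq: tau multiplication} and summing over $i$, the surviving terms genuinely reorganize into the clean sum $\sum_{i=1}^r\tau(b_i(k_i)\mathcal{B}_r^{(i)})$ with the correct multiplicities, and that no cross-terms are double-counted or dropped. Concretely, one must match $\mathcal{B}_r^{(i)}=w_{b_i^{tr}}\prod_{j\neq i}w_{b_j}$ for $i<r$ against the expression produced by the expansion, using that $w_{b_r}\cdot\mathcal{B}_{r-1}'^{(i)} = \mathcal{B}_r^{(i)}$ and $w_{b_r^{tr}}\cdot\mathcal{B}_{r-1}' = \mathcal{B}_r^{(r)}$. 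Once the combinatorial identity for these products is set up cleanly — which is essentially Lemma \ref{l: merging set} at the level of a single splicing step — the rest is a direct substitution. I would therefore front-load the argument by first recording the product identities $w_{b_r}\mathcal{B}_{r-1}'^{(i)}=\mathcal{B}_r^{(i)}$ and $w_{b_r^{tr}}\mathcal{B}_{r-1}'=\mathcal{B}_r^{(r)}$, and only then run \eqref{eq: tau multiplication}.
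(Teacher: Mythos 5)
Your proposal is correct and follows essentially the same route as the paper: induct on $r$, expand $\mathcal{B}_{r-1}$ by the inductive hypothesis modulo $\widehat{M}_{k-k_r-1}$, multiply by $w_{b_r}$ with the error absorbed via Lemma \ref{l: multiplying the Mns}, apply \eqref{eq: tau multiplication} to each product $w_{b_r}\tau\big(b_i(k_i)\mathcal{B}_{r-1}^{(i)}\big)$, discard the cross term $\tau\big(b_r(k_r)w_{b_r^{tr}}b_i(k_i)\mathcal{B}_{r-1}^{(i)}\big)\in\widehat{M}_{k-1}$, and reassemble the survivors into $\tau\big(b_r(k_r)\mathcal{B}_r^{(r)}\big)+\sum_{i<r}\tau\big(b_i(k_i)\mathcal{B}_r^{(i)}\big)$ using $w_{b_r}\mathcal{B}_{r-1}^{(i)}=\mathcal{B}_r^{(i)}$ and $w_{b_r^{tr}}\mathcal{B}_{r-1}=\mathcal{B}_r^{(r)}$. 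The error-control steps you flag (including the second use of the inductive hypothesis inside the outer $\tau$) are exactly the ones the paper carries out.
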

		\begin{proof}
			We proceed by induction on $r$ with the base case $r=1$ being immedaite. Let $r>1$ and assume the lemma holds for $r-1$. In particular, we have
			\begin{align}\label{eq: ultra product prop equation 2}
			\mathcal{B}_{r-1} &\equiv \sum_{i=1}^{r-1} \tau\left (b_i(k_i) \mathcal{B}_{r-1}^{(i)} \right) \mod \widehat{M}_{k-k_r-1}.
			\end{align}
			We know $\widehat{M}_{k-k_r-1}\widehat{M}_{k_r} \subset \widehat{M}_{k-1}$ by Lemma \ref{l: multiplying the Mns}. Thus, multiplying \eqref{eq: ultra product prop equation 2} by $w_{b_r}$: 
			\begin{align}\label{eq: ultra product prop equation 5}
			\mathcal{B}_r &\equiv w_{b_r} \sum_{i=1}^{r-1} \tau\left (b_i(k_i) \mathcal{B}_{r-1}^{(i)} \right ) \mod \widehat{M}_{k-1}. 
			\end{align}
			For $1\leq i \leq r-1$ we know by \eqref{eq: tau multiplication} that
			\begin{align}\label{eq: ultra product prop equation 3}
				w_{b_r}\tau(b_i(k_i)\mathcal{B}_{r-1}^{(i)}) &=\tau\left(b_r(k_r)w_{b_r^{tr}} \tau\left (b_i(k_i)\mathcal{B}_{r-1}^{(i)}\right)\right)  + \tau\left(b_i(k_i)\mathcal{B}_{r-1}^{(i)} w_{b_r}\right) - \tau\left(b_r(k_r)w_{b_r^{tr}} b_i(k_i)\mathcal{B}_{r-1}^{(i)}\right).
			\end{align}
			We know $\mathcal{B}_{r-1}^{(i)} \in \widehat{M}_{k-k_r -1}$ and $w_{b_r^{tr}} \in \widehat{M}_{k_r-1}$, so that the last term in the right side of \eqref{eq: ultra product prop equation 3} is in $\widehat{M}_{k-1}$. As
			$\mathcal{B}_{r-1}^{(i)} w_{b_r}=\mathcal{B}_r^{(i)}$ we have
			\begin{align}\label{eq: ultra product prop equation 4}
				w_{b_r}\tau(b_i(k_i)\mathcal{B}_{r-1}^{(i)}) &\equiv  \tau\left(b_r(k_r)w_{b_r^{tr}} \tau\left (b_i(k_i)\mathcal{B}_{r-1}^{(i)}\right)\right)  + \tau\left(b_i(k_i)\mathcal{B}_{r}^{(i)}\right) \mod \widehat{M}_{k-1}.
			\end{align}
			Next, by our inductive hypothesis we know 
			\begin{align*}
				\mathcal{B}_{r-1} &\equiv \sum_{i=1}^{r-1} \tau\left (b_i(k_i) \mathcal{B}_{r-1}^{(i)} \right)\mod\widehat{M}_{k-k_r-1}.
			\end{align*}
			We can combine this with \eqref{eq: ultra product prop equation 4} and \eqref{eq: ultra product prop equation 5} to get
			\begin{align*}
				\mathcal{B}_r &\equiv  \tau\left(b_r(k_r)w_{b_r^{tr}} \mathcal{B}_{r-1}\right) + \sum_{i=1}^{r-1} \tau\left(b_i(k_i)\mathcal{B}_{r}^{(i)}\right)\mod\widehat{M}_{k-k_r-1}.
			\end{align*}
			The result follows by observing that $\mathcal{B}_{r}^{(r)}=w_{b_r^{tr}} \mathcal{B}_{r-1}$. 
		\end{proof}
	
		\begin{proposition}
			\label{p: most general multiplying w lemma} Continue with the notation from Lemma \ref{l: preliminary multiplying lemma}.				
				We have
			\begin{align*}
				\mathcal{B}_r &\equiv \sum_{c \in S(b_1,\dots,b_r)} w_c \mod \widehat{M}_{k-1}.
			\end{align*}
		\end{proposition}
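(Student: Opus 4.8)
The plan is to argue by induction on the total depth $k = k_1 + \dots + k_r$, allowing some $k_i$ to be $0$ with the convention that depth-$0$ data are simply dropped from the splicing construction of Definition~\ref{def: splicing IF datum} (this is harmless, since $w_b = 1$ for the depth-$0$ datum). The base case $k = 0$ is immediate: then $\mathcal{B}_r = 1$, and $S(b_1,\dots,b_r)$ is the singleton consisting of the depth-$0$ datum, whose $\IF$-solution is $1$. So fix $k \geq 1$ and assume the statement for all collections of $\IF$-data of total depth $< k$.

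First I would invoke Lemma~\ref{l: preliminary multiplying lemma} to get
\begin{align*}
\mathcal{B}_r \equiv \sum_{i=1}^r \tau\bigl(b_i(k_i)\,\mathcal{B}_r^{(i)}\bigr) \mod \widehat{M}_{k-1}.
\end{align*}
For each fixed $i$, the factor $\mathcal{B}_r^{(i)} = w_{b_i^{tr}}\prod_{j\neq i} w_{b_j}$ is a product of $\IF$-solutions whose depths sum to $k-1$, so the inductive hypothesis applied to the collection $(b_1,\dots,b_{i-1},b_i^{tr},b_{i+1},\dots,b_r)$ produces, with $R_i \in \widehat{M}_{k-2}$,
\begin{align*}
\mathcal{B}_r^{(i)} = \sum_{\eta \in S(b_1,\dots,b_i^{tr},\dots,b_r)} w_{b_\eta} + R_i .
\end{align*}

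Next I would apply the operator $g \mapsto \tau(b_i(k_i)g)$ to this identity; note $R_i \in x\Z_q\powerseries{x}$, being a difference of products of $\IF$-solutions, so everything in sight is in the domain of $\tau$. Since $b_i(k_i) \in x\Z_q\powerseries{x}$ and $R_i$ lies in $\widehat{M}_{k-2}$, the error term $\tau(b_i(k_i)R_i)$ lies in $\widehat{M}_{k-1}$, by the $p$-adically completed form of Lemma~\ref{c: tau applied to the A's} (expand $b_i(k_i)$ into $x_j^i$-pieces modulo $p$, apply \eqref{eq: tau p power} repeatedly, and pass to the completion, exactly as in the proofs of Lemmas~\ref{c: tau applied to the A's} and~\ref{l: multiplying the Mns}). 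For the main term, each datum $\cat(b_\eta, b_i(k_i))$ has depth $k$ with truncation $b_\eta$, so the recursive definition of $\IF$-solutions gives $\tau(b_i(k_i)w_{b_\eta}) = w_{\cat(b_\eta, b_i(k_i))}$, an $\IF$-solution of depth $k$. Therefore
\begin{align*}
\tau\bigl(b_i(k_i)\,\mathcal{B}_r^{(i)}\bigr) \equiv \sum_{\eta \in S(b_1,\dots,b_i^{tr},\dots,b_r)} w_{\cat(b_\eta,\, b_i(k_i))} \mod \widehat{M}_{k-1},
\end{align*}
and summing over $i$ and reorganizing the double sum via the recursion of Lemma~\ref{l: merging set} yields
\begin{align*}
\mathcal{B}_r \equiv \sum_{i=1}^r \sum_{\eta \in S(b_1,\dots,b_i^{tr},\dots,b_r)} w_{\cat(b_\eta,\, b_i(k_i))} = \sum_{c \in S(b_1,\dots,b_r)} w_c \mod \widehat{M}_{k-1}.
\end{align*}

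The step I expect to be the main obstacle is not any single computation but the bookkeeping: checking that the multiset produced by the induction — a disjoint union over $i$ of concatenations $\cat(b_\eta, b_i(k_i))$ — matches, with multiplicities, the splicing multiset $S(b_1,\dots,b_r)$ of Lemma~\ref{l: merging set}, and handling cleanly the indices $i$ with $k_i = 1$, where $b_i^{tr}$ drops to depth $0$ and one of the factors in $\mathcal{B}_r^{(i)}$ disappears. The accompanying analytic point, that $\tau$ carries the degree-$\le k-2$ remainders into $\widehat{M}_{k-1}$ rather than something larger, is likewise controlled by the closure results already established in this section (Lemmas~\ref{c: tau applied to the A's}, \ref{l: sigma on M_n}, \ref{l: multiplying the Mns} and Corollary~\ref{c: the space M has no weird congruences}), so beyond Lemmas~\ref{l: preliminary multiplying lemma} and~\ref{l: merging set} no new ingredient is required.
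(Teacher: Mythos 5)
Your proof is correct and follows essentially the same route as the paper's: reduce via Lemma \ref{l: preliminary multiplying lemma}, apply the inductive hypothesis to each $\mathcal{B}_r^{(i)}$, push $\tau(b_i(k_i)\cdot\,)$ through using the closure statement of Lemma \ref{c: tau applied to the A's}, and reassemble the resulting multiset of concatenations via Lemma \ref{l: merging set}. The only difference is cosmetic (and if anything clarifying): you induct on the total depth $k$ rather than on $r$ as the paper states, which better matches what the inductive step actually uses, since $\mathcal{B}_r^{(i)}$ is still a product of $r$ factors but of smaller total depth.
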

		\begin{proof}
			We induct on $r$. When $r=1$ the result is immediate. Let $r>1$ and assume the result holds for $r-1$. From Lemma \ref{l: preliminary multiplying lemma}
			we know 
			\begin{align*}
			\mathcal{B}_r &=\sum_{i=1}^r \tau\left (b_i(k_i) \mathcal{B}_r^{(i)} \right) \mod \widehat{M}_{k-1}.
			\end{align*}
			By our inductive hypothesis we have
			\begin{align*}
				\mathcal{B}_r^{(i)} &\equiv \sum_{c \in S(b_1,\dots,b_{i-1},b_i^{tr}, b_{i+1}, \dots, b_r)} w_c \mod \widehat{M}_{k-2}.
			\end{align*}
			The proposition then follows from Lemma \ref{l: merging set} and Corollary \ref{c: tau applied to the A's}
		\end{proof}
	
		We will now give some corollaries of Proposition \ref{p: most general multiplying w lemma}. First
		we need a definition.
		\begin{definition}
			Let $k>0$ and $z \in x\F_q\powerseries{x}$. Let $\beta_z^k$ to be the $\IF$-datum of
			depth $k$ such that $\beta_z^k(i)=z$ for each $1\leq i \leq k$. We define $\omega_z^k$ to be $w_{\beta_z^k}$. We let $\overline{\omega}_z^k$ be the reduction of $\omega_z^k$
			modulo $p$ and we let $\dot{\omega}_z^k$ be $\dot{ w_{\beta_z^k}}$ as defined in Definition \ref{def: dual basis}.
		\end{definition}
		We readily compute that
		\begin{align*}
			\omega^k_z &= \sum_{i_k\geq \dots \geq i_1 \geq 0} z^{bp^{i_1 + \dots + i_k}}.
		\end{align*}
		The elements $\omega_z^k$ and $\overline{\omega}^k_z$ will appear when studying the higher derivatives of
		the Artin-Hasse exponential. 
		We have the following corollaries of Proposition \ref{p: most general multiplying w lemma}.
		
		\begin{corollary} \label{c: multiplying the wks}
			Fix $z \in \F_q\powerseries{x}$. Let $k_1,\dots,k_r\geq 1$ and set $k=k_1 + \dots + k_r$. Then
			\begin{align*}
			\prod_{i=1}^r {\omega}^{k_1}_z &\equiv  \binom{k}{k_1,\dots,k_r} {\omega}^{k}_z \mod \widehat{M}_{k-1}.
			\end{align*}
		\end{corollary}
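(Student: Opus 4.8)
The plan is to apply Proposition \ref{p: most general multiplying w lemma} directly to the special case $b_1 = \dots = b_r = \beta_z^k$ (more precisely $b_i = \beta_z^{k_i}$), and then observe that every splicing collapses to a single $\IF$-datum. Concretely, with $b_i = \beta_z^{k_i}$ we have $b_i(j) = z$ for all $j \in [k_i]$, so for any splicing function $\eta \in C(k_1,\dots,k_r)$ the composite $b_\eta : [k] \to [k_1,\dots,k_r] \to x\F_q\powerseries{x}$ sends every index to $z$; that is, $b_\eta = \beta_z^k$ regardless of $\eta$. Hence the multiset $S(b_1,\dots,b_r)$ consists of the single datum $\beta_z^k$ repeated $|C(k_1,\dots,k_r)| = \binom{k}{k_1,\dots,k_r}$ times.

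Carrying this out: first invoke Proposition \ref{p: most general multiplying w lemma} to get
\[
\prod_{i=1}^r \omega_z^{k_i} = \prod_{i=1}^r w_{\beta_z^{k_i}} \equiv \sum_{c \in S(\beta_z^{k_1},\dots,\beta_z^{k_r})} w_c \mod \widehat{M}_{k-1}.
\]
Then use the observation above that every $c$ in this multiset equals $\beta_z^k$, so the sum has $\binom{k}{k_1,\dots,k_r}$ identical terms each equal to $w_{\beta_z^k} = \omega_z^k$. This immediately yields
\[
\prod_{i=1}^r \omega_z^{k_i} \equiv \binom{k}{k_1,\dots,k_r}\, \omega_z^k \mod \widehat{M}_{k-1},
\]
which is the claim. (I note the statement in the excerpt writes $\prod_{i=1}^r \omega_z^{k_1}$ and $\omega^{k_i}_z$ inconsistently; the intended product is evidently $\prod_{i=1}^r \omega_z^{k_i}$.)

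There is essentially no obstacle here: the entire content is the combinatorial identification of $S(\beta_z^{k_1},\dots,\beta_z^{k_r})$, which is forced by the fact that all entries of all the $b_i$ are the constant $z$. The only point requiring a line of care is confirming that a splicing function, though it permutes the positions of the entries, cannot change which element sits at a given position when all entries coincide — and that the count of splicing functions is exactly the multinomial coefficient, which is recorded in Definition \ref{def: merging function}. Everything else is a direct substitution into Proposition \ref{p: most general multiplying w lemma}.
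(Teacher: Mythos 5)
Your argument is correct and is exactly the paper's proof: apply Proposition \ref{p: most general multiplying w lemma} with $b_i=\beta_z^{k_i}$, note every splicing equals $\beta_z^k$, and count the $\binom{k}{k_1,\dots,k_r}$ splicing functions. You have simply spelled out the collapse of the multiset $S(\beta_z^{k_1},\dots,\beta_z^{k_r})$ that the paper leaves implicit (and you are right that the $k_1$ in the displayed product is a typo for $k_i$).
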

		
		\begin{proof}
			This follows from Proposition \ref{p: most general multiplying w lemma} and the fact that there are $\binom{k}{k_1,\dots,k_r}$ splicing functions.
		\end{proof}
		
		\begin{corollary}
			\label{l: approximating $tau(x)^n$}
			We have $\tau(z)^k \equiv k!{\omega}^{k}_z \mod \widehat{M}_{k-1}$.
		\end{corollary}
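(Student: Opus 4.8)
The plan is to recognize this as the balanced special case of Corollary \ref{c: multiplying the wks}. Concretely, I would take $r = k$ and $k_1 = k_2 = \dots = k_k = 1$, so that $k_1 + \dots + k_r = k$, and observe two elementary facts: first, the $\IF$-datum $\beta_z^1$ has depth $1$, so by the recursive definition of $w_b$ we have $\omega_z^1 = w_{\beta_z^1} = \tau\bigl(\beta_z^1(1)\,w_{b^{tr}}\bigr) = \tau(z \cdot 1) = \tau(z)$; and second, the multinomial coefficient $\binom{k}{1,1,\dots,1} = k!$.

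With these substitutions in hand, Corollary \ref{c: multiplying the wks} reads
\begin{align*}
\tau(z)^k = \prod_{i=1}^k \omega_z^1 \equiv \binom{k}{1,\dots,1}\,\omega_z^k = k!\,\omega_z^k \mod \widehat{M}_{k-1},
\end{align*}
which is exactly the claim. So no new argument is needed beyond unwinding definitions and plugging into the already-established corollary; alternatively one could cite Proposition \ref{p: most general multiplying w lemma} directly, since the multiset $S(\beta_z^1,\dots,\beta_z^1)$ ($k$ copies) consists of $k!$ copies of $\beta_z^k$ (every splicing function $\eta \in C(1,\dots,1)$ produces the datum that is constantly $z$ on $[k]$).

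There is no real obstacle here; the only thing to be careful about is the identification $\omega_z^1 = \tau(z)$ and the bookkeeping that all $k!$ splicings of $k$ depth-one data equal $\beta_z^k$, both of which are immediate from the definitions in \S\ref{ss: Products of solutions to recursive Frobenius equations}.
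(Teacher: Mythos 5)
Your proposal is correct and is exactly the argument the paper intends: the corollary is the special case $r=k$, $k_1=\dots=k_k=1$ of Corollary \ref{c: multiplying the wks} (equivalently of Proposition \ref{p: most general multiplying w lemma}), using $\omega_z^1=\tau(z)$ and $\binom{k}{1,\dots,1}=k!$. No further comment is needed.
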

		
		\begin{corollary}
			\label{c: need correct letters for multiplying}
			Let $b_1,\dots,b_r \in D$, let $k_i=\dep{b_i}$, and set $k=k_1+ \dots+ k_r$. 
			Then for any $z \in X$ we have
			\begin{align*}
				\dot{\omega}_z^k(\overline{w}_{b_1}\dots \overline{w}_{b_r}) &= \begin{cases}
					\binom{k}{k_1,\dots,k_r} & \text{ each }\overline{w}_i=\overline{\omega}_{z}^{k_i} \\
					0 & \text{ otherwise} 
				\end{cases}.
			\end{align*}
		\end{corollary}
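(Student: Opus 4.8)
The plan is to reduce Proposition~\ref{p: most general multiplying w lemma} modulo $p$ and then feed the result through the coordinate functional $\dot{\omega}_z^k$, which converts the assertion into a purely combinatorial count of splicing functions. Concretely, Proposition~\ref{p: most general multiplying w lemma} gives
\begin{align*}
\mathcal{B}_r = w_{b_1}\cdots w_{b_r} &\equiv \sum_{c \in S(b_1,\dots,b_r)} w_c \pmod{\widehat{M}_{k-1}},
\end{align*}
the sum ranging over the multiset $S(b_1,\dots,b_r)$, and every $c$ occurring here has depth exactly $k=k_1+\dots+k_r$. I would reduce this congruence modulo $p$, invoking Corollary~\ref{c: the space M has no weird congruences} to guarantee that the reduction of $\widehat{M}_{k-1}$ lands inside $\overline{M}_{k-1}$, obtaining
\begin{align*}
\overline{w}_{b_1}\cdots\overline{w}_{b_r} &\equiv \sum_{c \in S(b_1,\dots,b_r)} \overline{w}_c \pmod{\overline{M}_{k-1}}.
\end{align*}

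Next, since $\dot{\omega}_z^k = \dot{w}_{\beta_z^k}$ is the coordinate functional for $\overline{w}_{\beta_z^k}$ in the basis of Proposition~\ref{p: linear independence of the w's mod p} (Definition~\ref{def: dual basis}), it annihilates every $\overline{w}_b$ with $b \neq \beta_z^k$; as $\beta_z^k$ has depth $k$, it in particular vanishes on $\overline{M}_{k-1}$. Applying it to the displayed congruence therefore yields
\begin{align*}
\dot{\omega}_z^k(\overline{w}_{b_1}\cdots\overline{w}_{b_r}) &= \#\{\, c \in S(b_1,\dots,b_r) \mid c = \beta_z^k \,\},
\end{align*}
the count taken with multiplicity in the multiset.

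It remains to do the combinatorics. For $\eta \in C(k_1,\dots,k_r)$ one has $b_\eta(m) = b_i(j)$ when $\eta(m)=(i,j)$, and $\eta$ is a bijection onto $[k_1,\dots,k_r]$; hence $b_\eta = \beta_z^k$ iff $b_i(j)=z$ for \emph{all} $(i,j) \in [k_1,\dots,k_r]$, i.e. iff $b_i = \beta_z^{k_i}$ for every $i$. By the linear independence of the $\overline{w}_b$ this is equivalent to $\overline{w}_{b_i} = \overline{\omega}_z^{k_i}$ for all $i$. When that condition holds, every splicing function produces $\beta_z^k$, so the count above is $|C(k_1,\dots,k_r)| = \binom{k}{k_1,\dots,k_r}$; when it fails, no splicing equals $\beta_z^k$ and the count is $0$. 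This is exactly the asserted case distinction.

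The only step requiring genuine care is the mod-$p$ reduction: passing from a congruence modulo $\widehat{M}_{k-1}$ over $\Z_q$ to a congruence modulo $\overline{M}_{k-1}$ over $\F_q$, rather than merely modulo the reduction of some a priori larger module — and this is precisely what Corollary~\ref{c: the space M has no weird congruences} is designed to supply. The remaining steps are bookkeeping with the dual basis and with splicing functions, so I do not anticipate any real obstacle there.
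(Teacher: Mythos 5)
Your proposal is correct and follows exactly the route the paper intends: the corollary is stated as an immediate consequence of Proposition~\ref{p: most general multiplying w lemma} (reduced modulo $p$ via Corollary~\ref{c: the space M has no weird congruences}), with $\dot{\omega}_z^k$ killing $\overline{M}_{k-1}$ and all $\overline{w}_c$ with $c \neq \beta_z^k$, so that the value is the number of splicings equal to $\beta_z^k$, which is $\binom{k}{k_1,\dots,k_r}$ precisely when every $b_i = \beta_z^{k_i}$ and $0$ otherwise. Your combinatorial verification that $b_\eta = \beta_z^k$ forces all $b_i = \beta_z^{k_i}$ is exactly the bookkeeping the paper leaves implicit.
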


		\section{Higher derivatives and the Artin-Hasse exponential}
		\label{s: higher derivatives and the artin-hasse exponential}
		
		\subsection{Higher derivatives and a transcendental criterion}
		
		We define the \emph{higher derivatives} on $\F_q\LS{x}$ and $\Z_q\LS{x}$:
		\[\ipartial{k} = \frac{1}{k!} \frac{d^k}{d^kx} .\]
		We define the \emph{higher logarithmic derivatives} of $f\in \F_q\LS{x}$ or  $f \in \Z_q\LS{x}$ by 
		\[\dlog{k}(f) = \frac{\ipartial{k}(f)}{f}.\]
		
		Note that $\ipartial{k}$ and $\dlog{k}$ restrict to maps on $\F_p(x)$. Furthermore, if $K$ is a finite separable extension
		of $\F_p(x)$, then each $\ipartial{k}$ and $\dlog{k}$ extends uniquely to $K$ by a theorem of Schmidt (see \cite[\S 2]{Matzat-vanderPut-Iterative_diff_eqs}). 
		We begin with the following general transcendence result. 
		
		\begin{theorem}\label{t: iterative differential equation criterion for transcendence}
			Let $\alpha_1,\alpha_2,\dots$ be a sequence of power series in $\F_p\llbracket x \rrbracket$. Let $f(x) \in \F_p\llbracket x \rrbracket$ be a power series satisfying the iterative differential equation
			\[ \dlog{k}(f) = \alpha_k.\] 
			If $\F_p(x)[\alpha_i]_{i\geq 1}$ is an infinite extension of $\F_p(x)$, then $f(x)$ is transcendental over $\F_p[x]$. 
		\end{theorem}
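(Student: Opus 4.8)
The natural route is the contrapositive: \emph{if $f$ is algebraic over $\F_p(x)$, then $\F_p(x)[\alpha_i]_{i\geq 1}$ is a finite extension of $\F_p(x)$.} (Here $f\neq 0$, since otherwise $\dlog{k}(f)$ is undefined.) The plan is to exhibit one finite \emph{separable} extension $L/\F_p(x)$ containing $f$ and then to apply the theorem of Schmidt recalled above. As $f\in\F_p\powerseries{x}$ and $\F_p(x)\subseteq\F_p\powerseries{x}$, the field $L:=\F_p(x)(f)$ is a subfield of $\F_p\powerseries{x}$; granting that $L/\F_p(x)$ is separable (see the next paragraph), Schmidt's theorem shows the higher derivation $(\ipartial{k})_{k\geq 0}$ on $\F_p(x)$ extends uniquely to one on $L$, so in particular $\ipartial{k}(L)\subseteq L$ for every $k$. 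By the uniqueness clause this extension must coincide, on $L$, with the higher derivation already defined on $\F_p\powerseries{x}$ (both restrict to the standard one on $\F_p(x)$), so $\ipartial{k}(f)\in L$ with no ambiguity. Since $f$ is a unit of the field $L$, it follows that $\alpha_k=\dlog{k}(f)=\ipartial{k}(f)/f\in L$ for all $k$, whence $\F_p(x)[\alpha_i]_{i\geq 1}\subseteq L$ is finite over $\F_p(x)$ --- contradicting the hypothesis.

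The one substantive point is the separability claim: \emph{if $f\in\F_p\powerseries{x}$ is algebraic over $\F_p(x)$, then $L=\F_p(x)(f)$ is separable over $\F_p(x)$.} I would deduce this from the standard fact that, because $\{x\}$ is a $p$-basis of $\F_p(x)$, a finite extension $L/\F_p(x)$ is inseparable if and only if it contains a $p$-th root of $x$ (equivalently, $L\otimes_{\F_p(x)}\F_p(x^{1/p})=L[T]/(T^p-x)$ fails to be a field, i.e.\ $x\in L^p$). So if $L/\F_p(x)$ were not separable, there would be a $g\in L\subseteq\F_p\powerseries{x}$ with $g^p=x$; but writing $g=\sum_n a_n x^n$ with $a_n\in\F_p$ gives $g^p=\sum_n a_n x^{pn}$, which involves only exponents divisible by $p$ and therefore cannot equal $x$. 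This contradiction establishes the claim. (If one wishes to sidestep this, an alternative is to replace $f$ by $f^{p^e}$ with $e$ large enough that $f^{p^e}$ is separable over $\F_p(x)$, use the identity $\alpha_m^{p^e}=\dlog{p^e m}(f^{p^e})$ to place each $\alpha_m^{p^e}$ in a fixed finite extension of $\F_p(x)$, and then adjoin $p^e$-th roots.)

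I expect the separability step to be the only real obstacle: both the extendability of $\ipartial{k}$ to $L$ and the uniqueness that reconciles the two descriptions of $\ipartial{k}(f)$ depend on $L/\F_p(x)$ being separable, so one genuinely needs to know that an algebraic power series over $\F_p(x)$ cannot generate an inseparable extension. Everything else --- the multiplicativity of $\dlog{k}=\ipartial{k}/(\cdot)$ on the nonzero element $f$, and the containment $\F_p(x)[\alpha_i]_{i\geq 1}\subseteq L$ --- is formal.
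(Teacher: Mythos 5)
Your proof is correct and takes essentially the same route as the paper: assume $f$ is algebraic, observe that $\F_p(x)(f)$ is a finite \emph{separable} extension of $\F_p(x)$, invoke Schmidt's extension theorem (quoted just before the statement) so that every $\alpha_k=\dlog{k}(f)$ lies in that fixed finite extension, contradicting the hypothesis. The only differences are cosmetic: you supply a proof of the separability claim, which the paper asserts with the parenthetical ``necessarily separable,'' and you should say $L=\F_p(x)(f)\subseteq\F_p\LS{x}$ rather than $\F_p\powerseries{x}$ (since $1/x\in L$), which changes nothing in your $p$-th-root-of-$x$ argument.
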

		
		\begin{proof}
			Assume $f(x)$ is algebraic (necessarily separable) and contained in a field $K$. 
			As $\dlog{k}$ extends to $K$, we see that $\alpha_n $ is contained in $K$ as well. This contradicts our assumption that $\F_p(x)(\alpha_1,\alpha_2,\dots)$ is an infinite extension of $\F_p(x)$.
		\end{proof}
		
		\begin{corollary}
			\label{c: diff criterion using depths}
			Let $\alpha_1,\alpha_2,\dots$ be a sequence of power series contained in $\overline{M}$ with $\dep{\alpha_k}=k$ for each $k$.
			Let $f(x) \in \F_p\llbracket x \rrbracket$ be a power series satisfying the iterative differential equation
			\[ \dlog{k}(f) = \alpha_k.\] 
			Then $f(x)$ is transcendental.
		\end{corollary}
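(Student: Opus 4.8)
The plan is to deduce this corollary directly from Theorem \ref{t: iterative differential equation criterion for transcendence}, whose only hypothesis is that $\F_p(x)(\alpha_i)_{i \geq 1}$ be an infinite extension of $\F_p(x)$. So the whole task reduces to checking that this field extension has infinite degree, and that in turn is a pure dimension count that has already been packaged as Corollary \ref{c: linear independence for different depths}.

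Concretely, first I would invoke Corollary \ref{c: linear independence for different depths}: since $\alpha_1, \alpha_2, \dots \in \overline{M}$ and $\dep{\alpha_k} = k$ for each $k$, the $\alpha_k$ are linearly independent over $\F_p(x)$. Next I would observe that if a field extension $L/\F_p(x)$ contains an infinite $\F_p(x)$-linearly independent subset, then $L$ is infinite-dimensional as an $\F_p(x)$-vector space, and in particular $L/\F_p(x)$ cannot be finite (equivalently, cannot be algebraic). Applying this with $L = \F_p(x)(\alpha_1, \alpha_2, \dots)$, the hypothesis of Theorem \ref{t: iterative differential equation criterion for transcendence} is satisfied, and we conclude that $f(x)$ is transcendental over $\F_p[x]$.

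There is really no obstacle to speak of here: the statement is a formal corollary of two results already in hand, namely the transcendence criterion and the depth-based linear independence statement. The only places that merit a clause of care are (i) noting that the $\alpha_k$ are assumed to lie in $\overline{M}$, so that the notion of depth and Corollary \ref{c: linear independence for different depths} genuinely apply, and (ii) reading ``infinite extension'' in Theorem \ref{t: iterative differential equation criterion for transcendence} as ``extension of infinite degree,'' which is exactly what the vector-space dimension argument produces from infinitely many $\F_p(x)$-linearly independent elements. One could equally well unwind the argument: were $f$ algebraic it would lie in a finite separable $K/\F_p(x)$, each $\alpha_k = \dlog{k}(f)$ would lie in $K$ by Schmidt's extension theorem, and $K$ would then contain the infinite independent family $\{\alpha_k\}$ — impossible; but packaging this as an appeal to the already-proved Theorem \ref{t: iterative differential equation criterion for transcendence} is cleaner.
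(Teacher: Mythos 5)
Your argument is correct and is exactly the paper's proof: apply Corollary \ref{c: linear independence for different depths} to get $\F_p(x)$-linear independence of the $\alpha_k$, conclude they cannot lie in a finite extension of $\F_p(x)$, and then invoke Theorem \ref{t: iterative differential equation criterion for transcendence}. No differences worth noting.
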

		\begin{proof}
			From Corollary \ref{c: linear independence for different depths} we know that the $\alpha_k$'s are linearly independent over
			$\F_p(x)$, so they cannot be contained in a finite extension. Then use Theorem \ref{t: iterative differential equation criterion for transcendence}.
		\end{proof}

		\subsection{Estimating the higher derivatives of $E_p(x)$.}
		Let $L$ denote the differential operator $\frac{d}{dx} - \frac{\tau(x)}{x}$. 
		Using the description $
		E_p(x) = \exp\left( \sum_{n=0}^\infty \frac{x^{p^n}}{p^n} \right )$
		we see that for $k\geq 1$ we have
		\begin{align}\label{eq: higher derivative formula}
		\dlog{k}(E_p(x)) &= L^{k-1}\Big( \frac{\tau(x)}{x} \Big ).
		\end{align}  
		We have the following proposition.
		\begin{proposition}
			\label{p: first estimate of higher log derivative}
			For $k\geq 1$ we have
				\begin{align*}
					\dlog{k}(E_p(x)) \equiv \frac{\tau(x)^k}{x^k} \mod \widehat{M}_{k-1}.
				\end{align*}
			
		\end{proposition}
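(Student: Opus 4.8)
The plan is to prove the congruence by induction on $k$, using the first-order relation \eqref{eq: higher derivative formula} together with the stability of the filtration $\widehat{M}_0 \subseteq \widehat{M}_1 \subseteq \cdots$ under differentiation and multiplication established in \S\ref{s: solutions to iterated frobenius equations}. The one structural observation that makes everything go is that $\tau(x)$ is the $\IF$-solution $w_b$ attached to the depth-one $\IF$-datum $b$ with $b(1) = x$; since $M_1$ is a $\Z_q(x)$-module, it follows that $\tau(x)/x \in M_1 \subseteq \widehat{M}_1$, and then iterating Lemma \ref{l: multiplying the Mns} gives $\tau(x)^k/x^k = (\tau(x)/x)^k \in \widehat{M}_k$. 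So the right-hand side of the claimed congruence already lives in $\widehat{M}_k$, and the proposition asserts that it is precisely the depth-$k$ part of $\dlog{k}(E_p(x))$.

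For the base case $k = 1$, equation \eqref{eq: higher derivative formula} gives $\dlog{1}(E_p(x)) = \tau(x)/x$ on the nose, which is the claim modulo $\widehat{M}_0$. For the inductive step, \eqref{eq: higher derivative formula} yields $\dlog{k+1}(E_p(x)) = L\big(\dlog{k}(E_p(x))\big)$, where $L = \frac{d}{dx} - \frac{\tau(x)}{x}$. Writing the inductive hypothesis as $\dlog{k}(E_p(x)) = \frac{\tau(x)^k}{x^k} + m$ with $m \in \widehat{M}_{k-1}$ and applying $L$ via the Leibniz rule,
\[
\dlog{k+1}(E_p(x)) = \frac{d}{dx}\!\Big(\frac{\tau(x)^k}{x^k}\Big) + \frac{d}{dx}(m) - \frac{\tau(x)}{x}\cdot\frac{\tau(x)^k}{x^k} - \frac{\tau(x)}{x}\,m.
\]
The third term is $\frac{\tau(x)^{k+1}}{x^{k+1}}$ (up to the overall sign dictated by \eqref{eq: higher derivative formula}), and it is the only summand not visibly in $\widehat{M}_k$: the first term is in $\widehat{M}_k$ because $\tau(x)^k/x^k \in \widehat{M}_k$ and $\frac{d}{dx}$ preserves $\widehat{M}_k$ (Lemma \ref{l: differentiating M_n}); the second is in $\widehat{M}_{k-1} \subseteq \widehat{M}_k$ by the same lemma; and the fourth is in $\widehat{M}_1\widehat{M}_{k-1} \subseteq \widehat{M}_k$ by Lemma \ref{l: multiplying the Mns}. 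Hence $\dlog{k+1}(E_p(x)) \equiv \frac{\tau(x)^{k+1}}{x^{k+1}} \bmod \widehat{M}_k$, which closes the induction.

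I do not expect a serious obstacle here: this is a clean induction in which Lemma \ref{l: differentiating M_n} and Lemma \ref{l: multiplying the Mns} do essentially all of the work. The two points that require a little care are (i) checking that each error term lands in $\widehat{M}_k$ rather than merely in $\widehat{M}_{k+1}$ — which is exactly what the sharp index in $\widehat{M}_{k_1}\widehat{M}_{k_2} \subseteq \widehat{M}_{k_1+k_2}$ provides — and (ii) keeping the sign coming out of \eqref{eq: higher derivative formula} straight, so that no alternating factor creeps into the final formula; this is forced by the $k = 1$ case.
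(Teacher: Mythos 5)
Your proposal is correct and is essentially the paper's own proof: the same induction on $k$ via \eqref{eq: higher derivative formula}, with Lemma \ref{l: differentiating M_n} handling the differentiated terms and Lemma \ref{l: multiplying the Mns} handling the product of $\frac{\tau(x)}{x}$ with the depth-$(k-1)$ error, so the only surviving term modulo $\widehat{M}_k$ is $\frac{\tau(x)^{k+1}}{x^{k+1}}$. The sign issue you flag is an artifact of the paper's definition of $L$ (the recursion for logarithmic derivatives of $E_p(x)$ really carries $+\frac{\tau(x)}{x}$), and the paper's proof passes over it in exactly the same way, so your treatment matches theirs.
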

		\begin{proof}
			We proceed by induction on $k$. When $k=1$ the result is immediate. Assume
			the result holds for $k$. Then we have
			\begin{align*}
				L^{k+1}\Big( \frac{\tau(x)}{x} \Big )= L\Big( \frac{\tau(x)^k}{x^k} + c \Big ),
			\end{align*}
			where $c \in \widehat{M}_{k-1}$. We know $\frac{d}{dx}(\frac{\tau(x)^k}{x^k} + c) \in \widehat{M}_{k}$ from Lemma \ref{l: differentiating M_n}.
			We also know $\frac{\tau(x)}{x}c \in \widehat{M}_k$ by 
			Lemma \ref{l: multiplying the Mns}. Thus,
			\begin{align*}
				L\Big( \frac{\tau(x)^k}{x^k} + c \Big ) &\equiv \frac{\tau(x)^{k+1}}{x^{k+1}} \mod \widehat{M}_{k}.
			\end{align*}
			The Proposition follows from \eqref{eq: higher derivative formula}.			
		\end{proof}

		\begin{corollary}
			\label{c: estimate of higher derivative of $E_p(x)$}
			For $k\geq 1$ we have
			\begin{align*}
			\dlog{k}(E_p(x)) \equiv \frac{\omega_x^k}{x^k} \mod \widehat{M}_{k-1},\\
			\dlog{k}(\AHp(x)) \equiv \frac{\overline{\omega}^k_x}{x^k} \mod \overline{M}_{k-1}.
			\end{align*}
		\end{corollary}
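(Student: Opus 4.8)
The corollary comes from combining Proposition \ref{p: first estimate of higher log derivative} with Corollary \ref{l: approximating $tau(x)^n$}, the only real issue being the factor $k!$ that appears in the latter. Set $e := \dlog{k}(E_p(x)) - \tfrac{\omega_x^k}{x^k}$; it suffices to prove $e\in\widehat{M}_{k-1}$, after which the reduction modulo $p$ is routine. Proposition \ref{p: first estimate of higher log derivative} gives $\dlog{k}(E_p(x))\equiv \tfrac{\tau(x)^k}{k!\,x^k}\mod\widehat{M}_{k-1}$ (the $k!$ in the denominator reflecting the normalization $\ipartial{k}=\tfrac1{k!}\tfrac{d^k}{dx^k}$, and being invisible only at $k=1$), so multiplying through by $k!$ gives $k!\,\dlog{k}(E_p(x))\equiv\tfrac{\tau(x)^k}{x^k}\mod\widehat{M}_{k-1}$; combining with Corollary \ref{l: approximating $tau(x)^n$}, i.e. $\tau(x)^k\equiv k!\,\omega_x^k\mod\widehat{M}_{k-1}$, and dividing by $x^k$ (under which $\widehat{M}_{k-1}$ is stable), we obtain
\[
k!\,e \;=\; k!\,\dlog{k}(E_p(x)) - \frac{k!\,\omega_x^k}{x^k} \;\equiv\; \frac{\tau(x)^k - k!\,\omega_x^k}{x^k} \;\equiv\; 0 \mod \widehat{M}_{k-1}.
\]
Thus $k!\,e\in\widehat{M}_{k-1}$.

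The point now is that $e$ is $p$-integral, which allows us to remove the $k!$. The operators $\ipartial{k}$ preserve $\Z_q\powerseries{x}$, being the Hasse derivatives $x^n\mapsto\binom nk x^{n-k}$, and $E_p(x)$ is a unit there with constant term $1$, so $\dlog{k}(E_p(x))\in\Z_q\powerseries{x}$; since $\omega_x^k\in x^k\Z_q\powerseries{x}$ we also have $\tfrac{\omega_x^k}{x^k}\in\Z_q\powerseries{x}$, hence $e\in\Z_q\powerseries{x}\subset\Z_q\LS{x}$. Write $k! = p^a u$ with $u\in\Z_q^{\times}$. Then $k!\,e = p^a(ue)$ lies in $p^a\Z_q\LS{x}\cap\widehat{M}_{k-1}$, which by Corollary \ref{c: the space M has no weird congruences} equals $p^a\widehat{M}_{k-1}$; since $\Z_q\powerseries{x}\langle x^{-1}\rangle$ is $p$-torsion free we may cancel $p^a$ to get $ue\in\widehat{M}_{k-1}$, and then $e\in\widehat{M}_{k-1}$ because $\widehat{M}_{k-1}$ is a $\Z_q$-module. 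This proves $\dlog{k}(E_p(x))\equiv\tfrac{\omega_x^k}{x^k}\mod\widehat{M}_{k-1}$.

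It remains to reduce the identity $\dlog{k}(E_p(x)) = \tfrac{\omega_x^k}{x^k}+c$ (with $c\in\widehat{M}_{k-1}$) modulo $p$. Reduction commutes with the integral operators $\ipartial{k}$ and with inversion of the unit $E_p(x)$, so the reduction of $\dlog{k}(E_p(x))$ is $\dlog{k}(\AHp(x))$; the reduction of $\tfrac{\omega_x^k}{x^k}$ is $\tfrac{\overline{\omega}_x^k}{x^k}$ by definition of $\overline{\omega}_x^k$; and the reduction of $\widehat{M}_{k-1}$ is $\overline{M}_{k-1}$, since the $p$-adic completion contributes nothing modulo $p$ and $M_{k-1}$ surjects onto $\overline{M}_{k-1}$. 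This gives the second congruence. The one genuinely delicate step is the removal of $k!$ in the middle paragraph: a crude combination of the two inputs only yields $\dlog{k}(E_p(x))\equiv\tfrac{k!\,\omega_x^k}{x^k}\mod\widehat{M}_{k-1}$, which degenerates modulo $p$ as soon as $k\geq p$ and would wreck the depth count underlying Corollary \ref{c: diff criterion using depths}; Corollary \ref{c: the space M has no weird congruences} is exactly what converts the evident $p$-integrality of $\dlog{k}(E_p(x))$ into the divisibility needed to restore the clean formula.
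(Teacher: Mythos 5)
Your proof is correct and is essentially the paper's own argument: combine Proposition \ref{p: first estimate of higher log derivative} with Corollary \ref{l: approximating $tau(x)^n$}, cancel the factorial via Corollary \ref{c: the space M has no weird congruences}, and reduce modulo $p$. The one place you diverge is the factorial bookkeeping, and your version is the coherent one: the statement you actually use, $k!\,\dlog{k}(E_p(x))\equiv \tau(x)^k/x^k \bmod \widehat{M}_{k-1}$ (the printed Proposition omits the $k!$ forced by the normalization $\ipartial{k}=\tfrac{1}{k!}\tfrac{d^k}{dx^k}$, and the paper's proof of the corollary, which writes $\dlog{k}(E_p(x))\equiv k!\,\omega_x^k/x^k$ with the $k!$ on one side only, could not then invoke Corollary \ref{c: the space M has no weird congruences} to remove it), is exactly the form under which that corollary applies, and your explicit use of the $p$-integrality of $\dlog{k}(E_p(x))$ and of $\omega_x^k/x^k$ in $\Z_q\powerseries{x}$ is precisely the input the paper leaves implicit. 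One small caution: your intermediate phrasing $\dlog{k}(E_p(x))\equiv \tau(x)^k/(k!\,x^k)\bmod \widehat{M}_{k-1}$ is not obviously meaningful before the cancellation step (dividing an element of $\widehat{M}_{k-1}$ by $k!$ may leave $\widehat{M}_{k-1}$), so it is cleaner to state the $k!$-multiplied congruence from the start, which is all you use.
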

		\begin{proof}
			From Corollary \ref{l: approximating $tau(x)^n$} and Propostion \ref{p: first estimate of higher log derivative} we know $\dlog{k}(E_p(x)) \equiv \frac{k!\omega_x^k}{x^k}\mod \widehat{M}_{k-1}$. The equation about $\dlog{k}(E_p(x))$ then follows from Corollary \ref{c: the space M has no weird congruences}.			
			The equation about $\dlog{k}(\AHp(x))$ comes by reducing modulo $p$. 
		\end{proof}
	
		At this point we may establish the transcendence of $\AHp(x)$.

		\begin{theorem}
			\label{t: AHp is transcendental}
			The series $\AHp(x)$ is transcendental over $\F_p(x)$. 
		\end{theorem}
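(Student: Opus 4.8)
The plan is to combine the computation of the higher logarithmic derivatives of $\AHp(x)$ carried out above with the depth-based transcendence criterion, so that the argument becomes essentially a bookkeeping step. The key input is Corollary \ref{c: estimate of higher derivative of $E_p(x)$}, which asserts that for every $k \geq 1$ we have
\[
\dlog{k}(\AHp(x)) \equiv \frac{\overline{\omega}^k_x}{x^k} \mod \overline{M}_{k-1}.
\]
In particular, setting $\alpha_k := \dlog{k}(\AHp(x))$, each $\alpha_k$ lies in $\overline{M}_k \subset \overline{M}$, so $\AHp(x)$ satisfies an iterative differential equation $\dlog{k}(\AHp(x)) = \alpha_k$ with all $\alpha_k \in \overline{M}$.

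Next I would verify that $\dep{\alpha_k} = k$ for each $k$. By definition $\overline{\omega}^k_x = \overline{w}_{\beta_x^k}$, where $\beta_x^k$ is the $\IF$-datum of depth $k$ with every letter equal to $x$. Since $\{\overline{w}_b\}_{b \in D}$ is a basis of $\overline{M}$ over $\F_q(x)$ by Proposition \ref{p: linear independence of the w's mod p}, the element $\overline{w}_{\beta_x^k}$ is nonzero and has depth exactly $k$; multiplying by the scalar $x^{-k} \in \F_p(x)$ does not change its depth. Hence $\alpha_k \equiv \overline{\omega}^k_x/x^k \mod \overline{M}_{k-1}$ has depth $k$ as well.

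Finally, the sequence $\alpha_1, \alpha_2, \dots$ satisfies the hypotheses of Corollary \ref{c: diff criterion using depths}: the $\alpha_k$ lie in $\overline{M}$ with $\dep{\alpha_k} = k$, and $\AHp(x)$ satisfies $\dlog{k}(\AHp(x)) = \alpha_k$. Applying that corollary gives the transcendence of $\AHp(x)$ over $\F_p[x]$. There is no genuine obstacle left at this stage — the substantive work lies in the earlier results, namely the linear independence of the $\overline{w}_b$ (Proposition \ref{p: linear independence of the w's mod p}) and the control of $\dlog{k}(E_p(x))$ modulo lower depth (Proposition \ref{p: first estimate of higher log derivative} and Corollary \ref{c: estimate of higher derivative of $E_p(x)$}). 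The only point here that deserves a moment's care is recording that $\overline{\omega}^k_x$ really does have depth $k$ rather than collapsing into $\overline{M}_{k-1}$, which is precisely what the basis property supplies.
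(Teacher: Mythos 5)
Your proposal is correct and follows the paper's own argument essentially verbatim: set $\alpha_k=\dlog{k}(\AHp(x))$, invoke Corollary \ref{c: estimate of higher derivative of $E_p(x)$} to get $\alpha_k\in\overline{M}$ with $\dep{\alpha_k}=k$, and conclude by Corollary \ref{c: diff criterion using depths}. Your extra remark that $\overline{\omega}^k_x$ has depth exactly $k$ by the basis property of Proposition \ref{p: linear independence of the w's mod p} is a correct (and implicit in the paper) justification, not a deviation.
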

		\begin{proof}
			Set $\alpha_k=\dlog{k}(\AHp(x))$. From Corollary \ref{c: estimate of higher derivative of $E_p(x)$} we see that $\alpha_k \in \overline{M}$ and $\dep{\alpha_k}=k$. The theorem follows from Corollary \ref{c: diff criterion using depths}.			
		\end{proof}

		\subsection{Estimating $\dlog{k}\left[\AHp(f_1)^{n_1}\dots \AHp(f_r)^{n_r}\right]$}
		Recall from the introduction that for $f=\sum c_ix^i  \in x\F_q[x]$ we define
		\[f^* =\sum_{\substack{i\geq 0\\ p \nmid i}} c_i^*(f) x^i,~~~ \text{ with}~~~
		c_i^*(f) = \sum_{k\geq 0} c_{ip^k}^{1/p^k}.\]
		We can express $c_i^*(f)$ uniquely as $\sum_{j=0}^{f-1} c_{x_j^i}^* \zeta^{p^j}$
		where $c_{x_j^i}^* \in \F_p$. In particular, we have 
		\begin{align*}
			f^* &= \sum_{z \in X} c_z^* z. 
		\end{align*}
		
		In this subsection we prove the following proposition.
		\begin{proposition}
			\label{p: estimate of dlog for products}
			Let $k\geq 0$ and let $n_1,\dots,n_r\geq 0$. Let $f_1,\dots,f_r \in x\F_q[x]$. Then $\dlog{k}\left[\AHp(f_1)^{n_1}\dots \AHp(f_r)^{n_r}\right]$ is in
			$\overline{M}_{k}$ and for any $z \in X$ we have
			\begin{align}
				\dot{\omega}^k_z\left(\dlog{k}\left[\AHp(f_1)^{n_1}\dots \AHp(f_r)^{n_r}\right]\right)&=\left( \sum_{i=1}^r n_i c_z^{*}(f_i)\dlog{1}(f_i)\right)^k. \label{eq: coefficient of wk(nu)}
			\end{align}
			In particular, $\dlog{k}\left[\AHp(f_1)^{n_1}\dots \AHp(f_r)^{n_r}\right]$ has depth exactly $k$ if
			there exists $z \in X$ such that $\displaystyle\sum_{i=1}^r n_i c_z^{*}(f_i)\dlog{1}(f_i)$ is nonzero.
		\end{proposition}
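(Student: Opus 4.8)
The plan is to use a Leibniz rule for the higher derivatives to reduce the whole statement to a single evaluation $\AHp(g)$, settle that case with the chain rule together with the machinery of \S\ref{s: solutions to iterated frobenius equations}, and invoke the Artin--Schreier decomposition hidden in $f^{*}$ only at the very end; for clarity I describe everything for $f_i\in x\F_p[x]$ and comment on $\F_q$ afterward. The first ingredient is that the Hasse derivatives satisfy $\ipartial{k}(gh)=\sum_{i+j=k}\ipartial{i}(g)\ipartial{j}(h)$, hence $\dlog{k}(gh)=\sum_{i+j=k}\dlog{i}(g)\dlog{j}(h)$; equivalently $\Phi_h(t):=\sum_{k\geq0}\dlog{k}(h)\,t^{k}$ is multiplicative, so $\Phi_{\prod_i\AHp(f_i)^{n_i}}(t)=\prod_{a=1}^{N}\Phi_{\AHp(g_a)}(t)$, where $N=\sum_i n_i$ and $g_1,\dots,g_N$ is the list $f_1,\dots,f_r$ taken with multiplicities $n_i$. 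Comparing $t^{k}$-coefficients,
\begin{equation*}
\dlog{k}\Big[\prod_i\AHp(f_i)^{n_i}\Big]=\sum_{\substack{A\subseteq\{1,\dots,N\}\\ (k_a)_{a\in A},\ k_a\geq 1,\ \sum_{a}k_a=k}}\ \prod_{a\in A}\dlog{k_a}(\AHp(g_a)).
\end{equation*}
Granting the one-factor claim below, each $\dlog{k_a}(\AHp(g_a))\in\overline M_{k_a}$, so each summand lies in $\overline M_{k}$ by (the mod $p$ reduction of) Lemma \ref{l: multiplying the Mns}, which gives the first assertion. Expanding each $\dlog{k_a}(\AHp(g_a))$ in the basis $\{\overline w_b\}$ and applying Corollary \ref{c: need correct letters for multiplying} term by term, products of basis vectors of total depth $<k$ are killed by $\dot\omega^{k}_{z}$ while those of total depth $k$ force $\dep{b_a}=k_a$, so
\begin{equation*}
\dot\omega^{k}_{z}\Big(\prod_{a\in A}\dlog{k_a}(\AHp(g_a))\Big)=\binom{k}{(k_a)_{a\in A}}\ \prod_{a\in A}\dot\omega^{k_a}_{z}\big(\dlog{k_a}(\AHp(g_a))\big);
\end{equation*}
feeding in the one-factor formula $\dot\omega^{m}_{z}(\dlog{m}(\AHp(g)))=\big(c^{*}_{z}(g)\dlog{1}(g)\big)^{m}$ and summing over $A$ and $(k_a)$, the multinomial theorem collapses the right-hand side to $\big(\sum_{a=1}^{N}c^{*}_{z}(g_a)\dlog{1}(g_a)\big)^{k}=\big(\sum_i n_i c^{*}_{z}(f_i)\dlog{1}(f_i)\big)^{k}$, which is \eqref{eq: coefficient of wk(nu)}; the last sentence is then immediate, since a nonzero $\sum_i n_i c^{*}_{z}(f_i)\dlog{1}(f_i)$ forces a nonzero $\overline w_{\beta^{k}_{z}}$-component and $\dep{\beta^{k}_{z}}=k$.

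For the one-factor claim I would write $\Phi_{\AHp(g)}(t)=\AHp(g(x+t))/\AHp(g(x))$ and expand around $g(x)$: since $g(x+t)-g(x)=\sum_{j\geq1}\ipartial{j}(g)t^{j}$, Fa\`a di Bruno gives
\begin{equation*}
\dlog{k}(\AHp(g))=\sum_{m=1}^{k}\Big(\dlog{m}(\AHp(x))\big|_{x\mapsto g}\Big)\,e_{m,k}(g),\qquad e_{m,k}(g):=\sum_{\substack{j_1+\dots+j_m=k\\ j_\ell\geq 1}}\prod_{\ell}\ipartial{j_\ell}(g)\in\F_q[x].
\end{equation*}
Next I would observe that substitution $x\mapsto g$ commutes with $\tau$ (for $g$ with $\F_p$-coefficients), hence sends $\overline w_b$ to $\overline w_{b|_{x\mapsto g}}$, which lies in $\overline M_{\dep b}$ by Corollary \ref{c: IF-solutions are in M_n} (mod $p$); thus $x\mapsto g$ preserves $\overline M$ without raising depth. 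With Corollary \ref{c: estimate of higher derivative of $E_p(x)$} this gives $\dlog{m}(\AHp(x))|_{x\mapsto g}\equiv\overline\omega^{m}_{g}/g^{m}\pmod{\overline M_{m-1}}$, so only the $m=k$ term survives modulo $\overline M_{k-1}$; as $e_{k,k}(g)=(g')^{k}$ we obtain $\dlog{k}(\AHp(g))\equiv\overline\omega^{k}_{g}\,\dlog{1}(g)^{k}\pmod{\overline M_{k-1}}$, whence $\dlog{k}(\AHp(g))\in\overline M_{k}$ and $\dot\omega^{k}_{z}(\dlog{k}(\AHp(g)))=\dlog{1}(g)^{k}\,\dot\omega^{k}_{z}(\overline\omega^{k}_{g})$. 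It then remains to prove $\dot\omega^{k}_{z}(\overline\omega^{k}_{g})=c^{*}_{z}(g)^{k}$. Here I would use that $(g-g^{*})^{*}=0$, so by the remark following the definition of $f^{*}$ there is a polynomial $G$ with $g=g^{*}+(G^{p}-G)$. For $k=1$, $\overline\omega^{1}_{g}=\sum_{n\geq0}g^{p^n}=\sum_{n}(g^{*})^{p^n}+\sum_{n}(G^{p^{n+1}}-G^{p^{n}})$ and the second sum telescopes into $\overline M_{0}$, so $\overline\omega^{1}_{g}\equiv\sum_{z'\in X}c^{*}_{z'}(g)\,\overline\omega^{1}_{z'}\pmod{\overline M_{0}}$. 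Inductively, using $\omega^{k}_{g}=\tau(g\,\omega^{k-1}_{g})$, the decomposition of $g$, the hypothesis $\overline\omega^{k-1}_{g}\equiv\overline\omega^{k-1}_{g^{*}}\pmod{\overline M_{k-2}}$, and \eqref{eq: tau multiplication}--\eqref{eq: tau p power} with Lemma \ref{c: tau applied to the A's}, the $G^{p}-G$-contributions fall into $\overline M_{k-1}$, so $\overline\omega^{k}_{g}\equiv\overline\omega^{k}_{g^{*}}\pmod{\overline M_{k-1}}$; finally, since $g^{*}=\sum_{z\in X}c^{*}_{z}(g)\,z$ has coefficients in $\F_p$ and $\tau$ is $\Z_p$-linear, $\overline\omega^{k}_{g^{*}}=\overline w_{\beta^{k}_{g^{*}}}$ expands $\Z_p$-multilinearly as $\sum_{(z_1,\dots,z_k)\in X^{k}}\big(\prod_{\ell}c^{*}_{z_\ell}(g)\big)\,\overline w_{(z_1,\dots,z_k)}$, where $(z_1,\dots,z_k)$ is the $\IF$-datum with $b(\ell)=z_\ell$, and the only tuple giving $\overline w_{\beta^{k}_{z}}=\overline\omega^{k}_{z}$ is the constant one, with coefficient $c^{*}_{z}(g)^{k}$.

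The hard parts are the two load-bearing compatibility facts: (i) that substitution $x\mapsto g$ respects $\tau$ and the depth filtration on $\overline M$, and (ii) that the $G^{p}-G$ summand of $g$ perturbs $\overline\omega^{k}_{g}$ only inside $\overline M_{k-1}$; both rely essentially on $g$ being a polynomial, so that $G$ is a polynomial and the error terms are $\tau$'s of polynomials, which Lemma \ref{c: tau applied to the A's} places in low depth. For $f_i$ with coefficients in a larger field $\F_q$ the substitution in (i) no longer commutes with $\tau$ on the nose, and one must either carry the Teichmüller factors $\zeta^{p^{j}}$ through the whole computation or instead run the chain-rule argument in characteristic zero on a lift $E_p(\widetilde g)$ and reduce mod $p$ at the end, using Corollary \ref{c: the space M has no weird congruences} to dispose of the denominators $k!$ that appear in the classical Fa\`a di Bruno expansion; in that variant one must read off $\dot\omega^{k}_{z}(\overline\omega^{k}_{g})=c^{*}_{z}(g)^{k}$ from the multilinear expansion above \emph{before} clearing $k!$, rather than appealing to Corollary \ref{l: approximating $tau(x)^n$}, precisely because $k!$ vanishes mod $p$ once $k\geq p$.
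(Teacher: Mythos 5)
Your argument is correct and is essentially the paper's own proof: the higher product rule \eqref{eq: higher product rule} and chain rule \eqref{eq: chain rule} combined with Corollary \ref{c: estimate of higher derivative of $E_p(x)$}, the reduction to the primitive part $g^{*}$ (the paper's Lemma \ref{l: plugging f in the sk}, which you re-derive via the explicit decomposition $g=g^{*}+(G^{p}-G)$ and telescoping instead of the paper's induction with \eqref{eq: tau p power}), Corollary \ref{c: need correct letters for multiplying}, and the multinomial collapse. The only organizational difference is that you expand the integer powers into repeated factors rather than invoking the paper's Lemma \ref{l: partials of powers of AH} for $n$-th powers, and your caution about $\F_q$-coefficients is unnecessary: modulo $p$ the reduced $\tau$ is $h\mapsto\sum h^{p^{n}}$, which commutes with substitution $x\mapsto g$ for any $g\in x\F_q[x]$, the $\F_p$-linearity being needed only where the paper already supplies it through the expansion $f^{*}=\sum_{z\in X}c^{*}_{z}(f)z$ with $c^{*}_{z}(f)\in\F_p$.
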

		
		The proof of Proposition \ref{p: estimate of dlog for products} is broken into several steps. The main ingredients are Corollary \ref{c: estimate of higher derivative of $E_p(x)$} 
		and the following well known identities on higher derivatives.
		\begin{align}
			\ipartial{k}(f_1\dots f_n) &= \sum_{\substack{k_1, \dots, k_n \geq 0\\ k_1 + \dots +k_n = k}} \ipartial{k_1}(f_1)\cdot \dots\cdot  \ipartial{k_n}(f_n) \label{eq: higher product rule},\\
			\ipartial{k}(f(g)) &= \sum_{j=1}^k \ipartial{j}(f)(g) \cdot \sum_{\substack{k_1,\dots,k_j\geq 1 \\ k_1 + \dots + k_j = k}} \ipartial{k_1}(g) \cdot \dots \cdot \ipartial{k_j}(g). \label{eq: chain rule}
		\end{align}

		\begin{lemma}
			\label{l: partials of powers of AH}
			Let $n$ be any $p$-adic integer. We have 
			\begin{align*}
				\dlog{k}(\AHp(x)^n) &\equiv  n^k \frac{\overline{\omega}_{x}^k}{x^k} \mod \overline{M}_{k-1}
			\end{align*}
		\end{lemma}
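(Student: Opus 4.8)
The plan is to derive an exact finite formula expressing $\dlog{k}(\AHp(x)^n)$ as an $\F_p$-linear combination of products $\prod_l \dlog{k_l}(\AHp(x))$ with $\sum_l k_l = k$, valid for every $n\in\Z_p$, and then feed in the estimate $\dlog{k}(\AHp(x))\equiv \overline{\omega}^k_x/x^k \bmod \overline{M}_{k-1}$ of Corollary \ref{c: estimate of higher derivative of $E_p(x)$} (the $n=1$ case of the present lemma). To get the formula I would apply the chain rule \eqref{eq: chain rule} with outer function the power series $\phi(s)=(1+s)^n=\sum_{i\ge 0}\binom{n}{i}s^i$, whose coefficients $\binom{n}{i}$ lie in $\Z_p$ and so reduce modulo $p$, and inner function $\AHp(x)-1\in x\F_q\powerseries{x}$, so that $\phi(\AHp(x)-1)=\AHp(x)^n$. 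Using the identity $\ipartial{j}\phi(s)=\binom{n}{j}(1+s)^{n-j}$ (a polynomial identity in $n$, hence valid for all $n\in\Z_p$) together with $\ipartial{k_l}(\AHp(x)-1)=\ipartial{k_l}(\AHp(x))$ for $k_l\ge 1$, and dividing by $\AHp(x)^n$, one obtains
\begin{align*}
\dlog{k}\bigl(\AHp(x)^n\bigr) &= \sum_{j=1}^{k}\binom{n}{j}\sum_{\substack{k_1+\dots+k_j=k\\ k_1,\dots,k_j\ge 1}}\ \prod_{l=1}^{j}\dlog{k_l}(\AHp(x)).
\end{align*}
(Equivalently one may run this computation in $\Z_q\powerseries{x}$ with $E_p(x)^n\in 1+x\Z_q\powerseries{x}$ and reduce modulo $p$, avoiding any worry about divided powers in characteristic $p$.) By the mod $p$ analogue of Lemma \ref{l: multiplying the Mns} each summand lies in $\overline{M}_{k_1}\cdots\overline{M}_{k_j}\subseteq \overline{M}_k$, so $\dlog{k}(\AHp(x)^n)\in\overline{M}_k$.

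Next I would reduce modulo $\overline{M}_{k-1}$. Writing $\dlog{k_l}(\AHp(x))=\overline{\omega}^{k_l}_x/x^{k_l}+e_l$ with $e_l\in\overline{M}_{k_l-1}$, expanding the product $\prod_l \dlog{k_l}(\AHp(x))$ shows that every term containing at least one factor $e_l$ lies in $\overline{M}_{k-1}$, so $\prod_l \dlog{k_l}(\AHp(x))\equiv x^{-k}\prod_l \overline{\omega}^{k_l}_x \bmod \overline{M}_{k-1}$; then the mod $p$ reduction of Corollary \ref{c: multiplying the wks} gives $\prod_l \overline{\omega}^{k_l}_x\equiv \binom{k}{k_1,\dots,k_j}\overline{\omega}^k_x \bmod \overline{M}_{k-1}$. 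Hence $\dlog{k}(\AHp(x)^n)$ is congruent modulo $\overline{M}_{k-1}$ to $x^{-k}\overline{\omega}^k_x$ times the scalar $\sum_{j=1}^{k}\binom{n}{j}\sum_{k_1+\dots+k_j=k,\,k_l\ge 1}\binom{k}{k_1,\dots,k_j}$. The inner sum counts surjections $[k]\twoheadrightarrow[j]$, hence equals $j!\,S(k,j)$ with $S(k,j)$ a Stirling number of the second kind, so the scalar is $\sum_{j} S(k,j)\binom{n}{j}j!=\sum_{j}S(k,j)\,n(n-1)\cdots(n-j+1)$, which by the classical identity $\sum_j S(k,j)\,t^{\underline{j}}=t^k$ in $\Z[t]$ equals $n^k$ (and this persists modulo $p$). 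This yields $\dlog{k}(\AHp(x)^n)\equiv n^k\,\overline{\omega}^k_x/x^k \bmod \overline{M}_{k-1}$.

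The two combinatorial identities are routine; the real point to watch is the passage from integer to $p$-adic exponent. Since $\overline{M}_{k-1}$ is dense, hence not closed, in the $x$-adic topology, one cannot obtain the $\Z_p$-case by approximating $n$ by integers \emph{after} reducing modulo $\overline{M}_{k-1}$. The remedy built into the plan above is to establish the exact finite formula of the first paragraph directly for all $n\in\Z_p$ — this is legitimate because the only place $n$ enters is through the $\Z_p$-valued coefficients $\binom{n}{j}$ and $\binom{n}{i}$ — and to reduce modulo $\overline{M}_{k-1}$ only afterwards.
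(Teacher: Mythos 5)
Your proof is correct, and it takes a genuinely different route from the paper's. The paper first treats positive integers $n$ by applying the product rule \eqref{eq: higher product rule} to the $n$-fold product $\AHp(x)\cdots\AHp(x)$, so the scalar $n^k$ appears directly as $\sum_{k_1+\dots+k_n=k}\binom{k}{k_1,\dots,k_n}$, and then handles general $n\in\Z_p$ by writing $n=a+p^mb$ with $p^m$ large: the factor $(\AHp(x)^b)^{p^m}$ is annihilated by $\ipartial{j}$ for $1\le j\le k$, so $\dlog{k}(\AHp(x)^n)=\dlog{k}(\AHp(x)^a)$ exactly and $n^k\equiv a^k\bmod p$. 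You instead apply the chain rule \eqref{eq: chain rule} with outer series $(1+s)^n=\sum_i\binom{n}{i}s^i$ and inner series $\AHp(x)-1$, obtaining one exact formula valid for every $n\in\Z_p$ simultaneously, and then compute the scalar via the surjection count $j!\,S(k,j)$ and the identity $\sum_j S(k,j)\,n^{\underline{j}}=n^k$. Both arguments rest on the same inputs, namely Corollary \ref{c: estimate of higher derivative of $E_p(x)$}, Corollary \ref{c: multiplying the wks}, and the mod-$p$ multiplicativity $\overline{M}_{k_1}\overline{M}_{k_2}\subseteq\overline{M}_{k_1+k_2}$ (which the paper also uses implicitly when working modulo $\overline{M}_{k-1}$). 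What your version buys is uniformity in $n$ --- no separate truncation step and no case distinction --- at the price of a slightly heavier combinatorial identity (Stirling numbers rather than the multinomial theorem); the paper's version buys elementary counting at the price of the extra reduction from $\Z_p$ to nonnegative integers. Your closing caution about not approximating $n$ by integers after reducing modulo $\overline{M}_{k-1}$ is well taken, though note the paper's reduction is not a limiting argument either: a congruence $n\equiv a\bmod p^m$ with $p^m>k$ makes the two logarithmic derivatives literally equal, so no topological closure of $\overline{M}_{k-1}$ is ever needed.
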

		\begin{proof}
			First assume $n$ is a positive integer. By Corollary \ref{c: estimate of higher derivative of $E_p(x)$}, Corollary \ref{c: multiplying the wks}, and \eqref{eq: higher product rule} we have
			\begin{align*}
				\dlog{k}(\AHp(x)^n) &\equiv \sum_{\substack{k_1, \dots, k_n \geq 0\\ k_1 + \dots +k_n = k}}  \frac{1}{x^k} \overline{\omega}^{k_1}_x \dots \overline{\omega}^{k_n}_x \mod \overline{M}_{k-1} \\
				&\equiv  \sum_{\substack{k_1, \dots, k_n \geq 0\\ k_1 + \dots +k_n = k}} \binom{k}{k_1,\dots,k_n} \frac{\overline{\omega}^{k}_x}{x^k} \mod  \overline{M}_{k-1} \\
				&\equiv n^k \frac{\overline{\omega}^{k}_x}{x^k} \mod  \overline{M}_{k-1}.
			\end{align*}
			For the general result, write $n = a + p^m b$. If $m > v_p(k)$ we have
			$\dlog{k}(\AHp(x)^n) = \dlog{k}(\AHp(x)^a)$, since $\ipartial{k}(f^{p^m})=0$.
			The general result follows.
		\end{proof}
	
		\begin{corollary}
			\label{c: partials powers of AH composed with f} 
			For $f \in x\F_p[x]$ we have 
			\begin{align*}
			\dlog{k}(\AHp(f)^n) &\equiv
			n^k \frac{\overline{\omega}_{x}^k(f)}{x^k} (\dlog{1}f)^k  \mod \overline{M}_{k-1} 
			\end{align*}
		\end{corollary}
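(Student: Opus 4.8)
The plan is to reduce to the case $f=x$, which is precisely Lemma \ref{l: partials of powers of AH}, by feeding $\AHp(f)^n=\bigl(\AHp(x)^n\bigr)(f)$ into the higher chain rule \eqref{eq: chain rule}. The single structural input that makes this go through is that substitution $x\mapsto f$ commutes with $\overline\tau$, and hence preserves every $\overline M_j$; granting that, all the chain-rule terms that could contribute outside $\overline M_{k-1}$ automatically land in $\overline M_{k-1}$, and only the top term survives.

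First I would establish the substitution fact. Let $\iota_f$ denote the $\F_q$-algebra homomorphism ``$x\mapsto f$'' on $\F_q\powerseries{x}$ (and on $\F_q(x)$, which is legitimate since $f$ is non-constant). Modulo $p$ the operator $\sigma$ is the map $g\mapsto g^p$, so $\iota_f(\sigma g)=\iota_f(g)^p=\sigma(\iota_f g)$ because $\iota_f$ is a ring homomorphism; summing the $x$-adically convergent series $\overline\tau(g)=\sum_{n\ge 0}g^{\sigma^n}$ gives $\iota_f\circ\overline\tau=\overline\tau\circ\iota_f$ on $x\F_q\powerseries{x}$. From the recursion $\overline w_b=\overline\tau\!\left(b(k)\,\overline w_{b^{tr}}\right)$ it follows by induction on depth that $\iota_f(\overline w_b)=\overline w_{b_f}$, where $b_f$ is the $\IF$-datum of the same depth with entries $b_f(i)=b(i)(f)$; combining this with $\iota_f(\F_q(x))\subseteq\F_q(x)$ and Corollary \ref{c: IF-solutions are in M_n} yields $\iota_f(\overline M_j)\subseteq\overline M_j$ for every $j\ge 0$.

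Now set $h=\AHp(x)^n\in 1+x\F_p\powerseries{x}$, so $\AHp(f)^n=h(f)$, again a unit. Dividing \eqref{eq: chain rule} through by $h(f)$ and using $\ipartial{j}(h)(f)/h(f)=\bigl(\dlog{j}(h)\bigr)(f)$ gives
\begin{align*}
\dlog{k}\bigl(\AHp(f)^n\bigr)=\sum_{j=1}^{k}\bigl(\dlog{j}(h)\bigr)(f)\cdot\sum_{\substack{k_1,\dots,k_j\ge 1\\ k_1+\dots+k_j=k}}\ipartial{k_1}(f)\cdots\ipartial{k_j}(f).
\end{align*}
By Lemma \ref{l: partials of powers of AH} we have $\dlog{j}(h)\in\overline M_j$, hence by the previous paragraph $\bigl(\dlog{j}(h)\bigr)(f)\in\overline M_j$, and since each $\ipartial{k_i}(f)\in\F_q(x)$ the entire $j$-th summand lies in $\overline M_j$; therefore every term with $j<k$ vanishes modulo $\overline M_{k-1}$. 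For $j=k$ only the partition $k_1=\dots=k_k=1$ contributes, with factor $\bigl(\ipartial{1}(f)\bigr)^k$, and applying $\iota_f$ to the congruence $\dlog{k}(h)\equiv n^k\,\overline\omega_x^k/x^k\bmod\overline M_{k-1}$ of Lemma \ref{l: partials of powers of AH} (using $\iota_f(\overline M_{k-1})\subseteq\overline M_{k-1}$ once more) gives $\bigl(\dlog{k}(h)\bigr)(f)\equiv n^k\,\overline\omega_x^k(f)\bigl/f^k\bmod\overline M_{k-1}$. Multiplying by $\bigl(\ipartial{1}(f)\bigr)^k$ and recalling $\ipartial{1}(f)/f=\dlog{1}(f)$ produces the asserted congruence.

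The only step that needs genuine care is the substitution fact: one must check that $\iota_f$ does not raise depth — which is exactly where the explicit description of $b_f$ (equivalently Corollary \ref{c: IF-solutions are in M_n}) is used — and that rational coefficients are not destroyed. Everything after that is a formal manipulation of the given higher-derivative identities, since the genuinely characteristic-$p$ phenomenon, namely the collapse of the multinomial sum to the single power $n^k$, is already packaged inside Lemma \ref{l: partials of powers of AH}.
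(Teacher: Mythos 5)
Your proof is correct and takes essentially the same route as the paper: the higher chain rule \eqref{eq: chain rule} combined with Lemma \ref{l: partials of powers of AH}, plus the stability of each $\overline{M}_j$ under the substitution $x\mapsto f$, which the paper merely asserts and you justify properly via the commutation of $\iota_f$ with $\overline{\tau}$ and Corollary \ref{c: IF-solutions are in M_n}. One remark: what your computation actually yields is $n^k\,\overline{\omega}_x^k(f)\,(\dlog{1}f)^k$, without the extra factor $1/x^k$ in the printed statement; since the printed formula is inconsistent with the case $f=x$ (Lemma \ref{l: partials of powers of AH}) and with how the corollary is used in the proof of Proposition \ref{p: estimate of dlog for products}, that $1/x^k$ is a typo in the statement rather than a gap in your argument, though you should not claim verbatim that you obtain "the asserted congruence."
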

		\begin{proof}
			From Lemma \ref{l: partials of powers of AH} we see that $\dlog{i}(\AHp(x)^n) \in \overline{M}_{k-1}$ for $1 \leq i <k$. Note that for $g \in \overline{M}_{k-1}$ and $h \in x\F_p[x]$ we have $g\circ h\in \overline{M}_{k-1}$, so that $\dlog{i}(\AHp(x)^n)\circ f\in \overline{M}_{k-1}$
			for $1 \leq i <k$. The Corollary follows from \eqref{eq: chain rule}
			and Lemma \ref{l: partials of powers of AH}.
		\end{proof}
		
		\begin{lemma} \label{l: plugging f in the sk}
			Let $f \in x\F_p[x]$. Then 
			\begin{align*}
				\overline{\omega}_x^k ( f) &\equiv \sum_{\substack{b \in D \\ \dep{b}=k}} c_b^{*}(f) \overline{w}_b \mod \overline{M}_{k-1}, \text{ where} \\
				c_b^{*}(f) &= \prod_{i=1}^k c^*_{b(i)}(f).
			\end{align*}
		\end{lemma}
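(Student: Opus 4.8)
The plan is to run an induction on $k$ that transports the recursive definition $\omega_x^k=\tau(x\,\omega_x^{k-1})$ into a recursion \emph{after} the substitution $x\mapsto f$, working modulo $p$. Three preliminary facts set this up. (i) Modulo $p$ the operator $\sigma$ is the $p$-power Frobenius $F$ on $\F_q\powerseries{x}$, and $F$ is compatible with composition: $(u\circ v)^{p^n}=u^{p^n}\circ v$ for $v\in x\F_q\powerseries{x}$ (both sides equal $\sum a_i^p v^{ip}$). Applying this to $\overline{\omega}_x^k=\overline{\tau}(x\,\overline{\omega}_x^{k-1})=\sum_{n\ge0}F^n(x\,\overline{\omega}_x^{k-1})$ and substituting $x\mapsto f$ (legitimate $x$-adically) yields the \emph{substituted recursion}
\[\overline{\omega}_x^k(f)=\overline{\tau}\big(f\cdot\overline{\omega}_x^{k-1}(f)\big),\qquad \overline{\omega}_x^0(f)=1,\]
where $\overline{\tau}(h)=\sum_{n\ge0}F^n(h)$ is the reduction of $\tau$. (ii) Combining this recursion with the mod-$p$ reduction of Lemma~\ref{c: tau applied to the A's} (which gives $\overline{\tau}(x\overline{A}_k)\subseteq\overline{A}_{k+1}$, where $\overline{A}_k$ is the $\F_q[x]$-span of the $\overline{w}_b$ with $\dep{b}\le k$), a one-line induction shows $\overline{\omega}_x^k(f)\in\overline{A}_k$; in particular every coefficient that occurs is a \emph{polynomial}, which is what lets the error terms below be absorbed. (iii) Since $f\mapsto f^*$ is $\F_p$-linear and idempotent, $(f-f^*)^*=0$, hence $f-f^*=g^p-g$ for some $g\in x\F_p[x]$ by the remark following the definition of $f^*$; I feed $f=f^*+(g^p-g)$ into the recursion.

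For the base case $k=1$: $\overline{\omega}_x^1(f)=\overline{\tau}(f)=\overline{\tau}(f^*)+\overline{\tau}(g^p-g)$; mod $p$ we have $g^p-g=g^\sigma-g$, so $\overline{\tau}(g^p-g)$ telescopes to $-g\in\F_p[x]\subseteq\overline{M}_0$, while $f^*=\sum_{z\in X}c_z^*(f)\,z$ gives $\overline{\tau}(f^*)=\sum_{z\in X}c_z^*(f)\,\overline{\tau}(z)=\sum_{\dep b=1}c_b^*(f)\overline{w}_b$. For the inductive step, assume $\overline{\omega}_x^{k-1}(f)=\sum_{\dep{b'}=k-1}c_{b'}^*(f)\,\overline{w}_{b'}+r$ with $r\in\overline{A}_{k-2}$. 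Applying $\overline{\tau}(f\cdot{-})$ and expanding $f=f^*+(g^p-g)$ splits $\overline{\omega}_x^k(f)$ into three pieces. The main piece is $\sum_{\dep{b'}=k-1}c_{b'}^*(f)\,\overline{\tau}(f^*\overline{w}_{b'})$; writing $f^*=\sum_{z\in X}c_z^*(f)z$ and using the very definition $w_{\cat(b',z)}=\tau(z\,w_{b'})$, this equals $\sum_{\dep{b'}=k-1}\sum_{z\in X}c_{b'}^*(f)c_z^*(f)\,\overline{w}_{\cat(b',z)}$. Since $(b',z)\mapsto\cat(b',z)$ is a bijection onto the depth-$k$ IF-data whose last entry lies in $X$, since $c_{\cat(b',z)}^*(f)=c_{b'}^*(f)\,c_z^*(f)$, and since $c_b^*(f)=0$ whenever any entry of $b$ lies outside $X$, this sum is exactly $\sum_{\dep b=k}c_b^*(f)\overline{w}_b$.

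It remains to show the two error pieces lie in $\overline{M}_{k-1}$. For $\overline{\tau}(fr)$: as $r\in\overline{A}_{k-2}$ and $f\in x\F_p[x]$ we get $fr\in x\overline{A}_{k-2}$, so $\overline{\tau}(fr)\in\overline{A}_{k-1}\subseteq\overline{M}_{k-1}$ by Lemma~\ref{c: tau applied to the A's}. For $\sum_{\dep{b'}=k-1}c_{b'}^*(f)\,\overline{\tau}((g^p-g)\overline{w}_{b'})$: writing $\overline{w}_{b'}=\overline{\tau}(v)$ with $v=\overline{b'(k-1)}\,\overline{w}_{(b')^{tr}}$ and applying the mod-$p$ reduction of \eqref{eq: tau p power} with the roles $f\mapsto g$, $g\mapsto v$, one finds $\overline{\tau}((g^p-g)\overline{w}_{b'})=\overline{\tau}(g^p v)-g\,\overline{w}_{b'}$; here $g\,\overline{w}_{b'}\in\F_q(x)\overline{w}_{b'}\subseteq\overline{M}_{k-1}$, and $g^pv=g^p\,\overline{b'(k-1)}\,\overline{w}_{(b')^{tr}}\in x\overline{A}_{k-2}$ (as $\overline{b'(k-1)}\in x\F_q[x]$ and $\overline{w}_{(b')^{tr}}$ has depth $k-2$), so $\overline{\tau}(g^pv)\in\overline{A}_{k-1}\subseteq\overline{M}_{k-1}$ by Lemma~\ref{c: tau applied to the A's} again. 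Collecting the three pieces gives $\overline{\omega}_x^k(f)\equiv\sum_{\dep b=k}c_b^*(f)\overline{w}_b\pmod{\overline{M}_{k-1}}$, completing the induction.

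The step I expect to require the most care is establishing the substituted recursion cleanly: $\tau$ does \emph{not} commute with composition over $\Z_q$, and the identity holds only after reducing mod $p$ (where $\sigma=F$), so one must justify both the Frobenius–composition identity and the preservation of $x$-adic convergence of $\sum_n F^n(x\,\overline{\omega}_x^{k-1})$ under $x\mapsto f$. The other delicate point is tracking the polynomiality of coefficients (fact (ii)), since the absorption of the error terms via Lemma~\ref{c: tau applied to the A's} depends on the inputs lying in $x\overline{A}_\bullet$ rather than merely in $\overline{M}_\bullet$.
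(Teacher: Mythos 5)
Your proof is correct and follows essentially the same route as the paper: induct on $k$ via the recursion $\overline{\omega}_x^k(f)=\overline{\tau}\bigl(f\,\overline{\omega}_x^{k-1}(f)\bigr)$, use \eqref{eq: tau p power} to dispose of the non-primitive part $f-f^*=g^p-g$, and use the $\F_p$-linearity of $\overline{\tau}$ to unravel the nested $\tau$'s into the $\overline{w}_{\cat(b',z)}$. The only difference is presentational: you merge the paper's two steps (reduction to $f=f^*$, then exact unraveling) into a single induction and make explicit the bookkeeping (polynomial coefficients, i.e.\ membership in $\overline{A}_\bullet$, and the Frobenius--composition identity) that the paper leaves implicit.
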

	
		\begin{proof}
		 	First note that $\overline{\omega}_x^k ( f) \equiv \overline{\omega}_x^k ( f^{*})\mod \overline{M}_{k-1}$. This can be proven by inducting on $k$ and using
		 	\eqref{eq: tau p power}. We are therefore reduced to the case where $f=f^{*}$. 
		 	The result follows from the $\F_p$-linearity of $\tau$ modulo $p$ and unraveling the nested $\tau$'s.
		\end{proof}
	
		\begin{corollary}
			\label{c: coefficient of wbk}
			We have
			\begin{align*}
				\dot{\omega}_{z}^k\left(\overline{\omega}_x^k ( f) \right) &= c_z^*(f)^k.
			\end{align*}
		\end{corollary}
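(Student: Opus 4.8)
The plan is to deduce this at once from Lemma~\ref{l: plugging f in the sk} together with the defining property of the dual basis, so this should require essentially no new work. First I would recall that, by definition, $\dot{\omega}_z^k$ is the functional $\dot{w}_{\beta_z^k}$ of Definition~\ref{def: dual basis}, i.e. the coordinate functional dual to the basis vector $\overline{w}_{\beta_z^k}$. Since $\beta_z^k$ has depth exactly $k$, and since by Proposition~\ref{p: linear independence of the w's mod p} the subspace $\overline{M}_{k-1}$ is spanned by the $\overline{w}_b$ with $\dep{b}\le k-1$, the functional $\dot{\omega}_z^k$ annihilates every such $\overline{w}_b$ and hence vanishes identically on $\overline{M}_{k-1}$. (As in the neighbouring corollaries one takes $z\in X$, so that $\beta_z^k$ is a $p$-primary datum and $\dot{\omega}_z^k$ is genuinely a member of the dual basis.)

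Next I would apply $\dot{\omega}_z^k$ to the congruence supplied by Lemma~\ref{l: plugging f in the sk},
\[\overline{\omega}_x^k(f) \equiv \sum_{\substack{b \in D \\ \dep{b}=k}} c_b^{*}(f)\,\overline{w}_b \pmod{\overline{M}_{k-1}}.\]
By the previous paragraph the error term in $\overline{M}_{k-1}$ is killed, and by orthonormality of the dual basis only the summand with $b=\beta_z^k$ survives, so that $\dot{\omega}_z^k\!\left(\overline{\omega}_x^k(f)\right) = c_{\beta_z^k}^{*}(f)$.

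Finally I would unwind the formula $c_b^{*}(f)=\prod_{i=1}^k c^{*}_{b(i)}(f)$ from Lemma~\ref{l: plugging f in the sk}: because $\beta_z^k(i)=z$ for every $i\in[k]$, this product collapses to $\prod_{i=1}^{k} c_z^{*}(f) = c_z^{*}(f)^k$, which is exactly the assertion. The only point of the argument that deserves comment is the vanishing of $\dot{\omega}_z^k$ on $\overline{M}_{k-1}$, which is nothing more than the depth bookkeeping already built into the dual basis; everything else is a direct substitution, so I do not anticipate any genuine obstacle.
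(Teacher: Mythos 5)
Your argument is correct and is exactly the deduction the paper intends: the corollary is stated without proof as an immediate consequence of Lemma \ref{l: plugging f in the sk}, obtained by applying the dual functional $\dot{\omega}_z^k=\dot{w}_{\beta_z^k}$, which kills $\overline{M}_{k-1}$ and picks out the $b=\beta_z^k$ term, giving $c_{\beta_z^k}^*(f)=c_z^*(f)^k$. No gaps; your note on the vanishing of $\dot{\omega}_z^k$ on $\overline{M}_{k-1}$ is the only point needing mention and you handle it correctly.
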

	
		\begin{lemma}
			\label{l: coefficient for products of sk(fi)}
			Let $k_1,\dots,k_r \geq 1$ with $k_1+\dots+k_r =k$. 
			Then
			\begin{align*}
				\dot{\omega}_{z}^k\left( \overline{\omega}_x^{k_1} ( f_1(x))\dots \overline{\omega}_x^{k_r} ( f_r(x)) \right ) = \binom{k}{k_1,\dots,k_r} \prod_{i=1}^r c_z^*(f_i)^{k_i}
			\end{align*}
		\end{lemma}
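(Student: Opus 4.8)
\emph{Proof proposal.} The plan is to reduce the computation to the top--depth parts of the factors via Lemma~\ref{l: plugging f in the sk}, then invoke the multiplication formula through Corollary~\ref{c: need correct letters for multiplying}, and finally read off the constant from the definition of $\beta_z^k$.

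First I would expand each factor. By Lemma~\ref{l: plugging f in the sk}, for each $i$ we have
\[\overline{\omega}_x^{k_i}(f_i(x)) \equiv \sum_{\substack{b_i \in D \\ \dep{b_i}=k_i}} c_{b_i}^{*}(f_i)\,\overline{w}_{b_i} \mod \overline{M}_{k_i-1}, \qquad c_{b_i}^{*}(f_i)=\prod_{j=1}^{k_i}c^{*}_{b_i(j)}(f_i).\]
Multiplying these congruences together, any term in which at least one factor is replaced by an element of $\overline{M}_{k_i-1}$ lies in $\overline{M}_{(k_1-1)+k_2+\dots+k_r}=\overline{M}_{k-1}$, by the reduction modulo $p$ of Lemma~\ref{l: multiplying the Mns}. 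Since $\dot{\omega}_z^k$ annihilates $\overline{M}_{k-1}$, it therefore suffices to evaluate $\dot{\omega}_z^k$ on
\[\sum_{\substack{b_1,\dots,b_r \in D \\ \dep{b_i}=k_i}} \Big(\prod_{i=1}^{r} c_{b_i}^{*}(f_i)\Big)\,\overline{w}_{b_1}\cdots\overline{w}_{b_r}.\]

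Next I would apply Corollary~\ref{c: need correct letters for multiplying}, which says that $\dot{\omega}_z^k(\overline{w}_{b_1}\cdots\overline{w}_{b_r})$ equals $\binom{k}{k_1,\dots,k_r}$ when $\overline{w}_{b_i}=\overline{\omega}_z^{k_i}$ for every $i$ (that is, $b_i=\beta_z^{k_i}$), and vanishes otherwise. Hence the only surviving term of the sum above is the one with $b_i=\beta_z^{k_i}$ for all $i$, and it contributes $\big(\prod_{i=1}^r c_{\beta_z^{k_i}}^{*}(f_i)\big)\binom{k}{k_1,\dots,k_r}$. Finally, since $\beta_z^{k_i}(j)=z$ for every $j\in[k_i]$, the formula for $c_b^{*}$ from Lemma~\ref{l: plugging f in the sk} gives $c_{\beta_z^{k_i}}^{*}(f_i)=\prod_{j=1}^{k_i}c^{*}_z(f_i)=c_z^{*}(f_i)^{k_i}$, and substituting yields the claimed value $\binom{k}{k_1,\dots,k_r}\prod_{i=1}^{r}c_z^{*}(f_i)^{k_i}$.

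The only point requiring care is the first step: one must be sure that the lower--depth error terms produced by Lemma~\ref{l: plugging f in the sk} really do propagate into $\overline{M}_{k-1}$ after multiplication, which is exactly (the mod $p$ form of) Lemma~\ref{l: multiplying the Mns}; granting that, the remainder is bookkeeping with the dual basis $\{\dot w_b\}$.
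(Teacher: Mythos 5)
Your argument is correct and is essentially the paper's own proof written out in detail: the paper simply cites Corollary~\ref{c: coefficient of wbk} (which is the special case of your final computation $c_{\beta_z^{k_i}}^{*}(f_i)=c_z^{*}(f_i)^{k_i}$, itself derived from Lemma~\ref{l: plugging f in the sk}) together with Corollary~\ref{c: need correct letters for multiplying}. Your explicit handling of the lower-depth error terms via the mod-$p$ form of Lemma~\ref{l: multiplying the Mns} and the fact that $\dot{\omega}_z^k$ kills $\overline{M}_{k-1}$ is exactly the bookkeeping the paper leaves implicit.
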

		\begin{proof}
			This follows from Corollary \ref{c: coefficient of wbk} and Corollary \ref{c: need correct letters for multiplying}.
		\end{proof}
	
			\begin{proof}[Of Proposition \ref{p: estimate of dlog for products}]
				From \eqref{eq: higher product rule} and Corollary \ref{c: partials powers of AH composed with f} we have
				\begin{align*}
					\dlog{k}\left[\AHp(f_1)^{n_1}\dots \AHp(f_r)^{n_r}\right] &= \sum_{\substack{k_1, \dots, k_r \geq 0\\ k_1 + \dots +k_r = k}} \prod_{i=1}^r \dlog{k_i}(\AHp(f_i)^{n_i}) \\
					& \equiv \sum_{\substack{k_1, \dots, k_r \geq 0\\ k_1 + \dots +k_r = k}} \prod_{i=1}^r n_i^{k_i}(\dlog{1}(f_i))^{k_i} \prod_{i=1}^r\overline{\omega}_x^{k_i}(f)\mod \overline{M}_{k-1}.
				\end{align*}
				Applying Lemma \ref{l: coefficient for products of sk(fi)} to each $\displaystyle \prod_{i=1}^r\overline{\omega}_x^{k_i}(f)$ gives
				\begin{align*}
					\dot{\omega}_z^{k}\left(\dlog{k}\left[\AHp(f_1)^{n_1}\dots \AHp(f_r)^{n_r}\right]\right) &=\sum_{\substack{k_1, \dots, k_r \geq 0\\ k_1 + \dots +k_r = k}} \binom{k}{k_1,\dots,k_r} \prod_{i=1}^s \left[ n_i\dlog{1}(f_i)c_z^*(f_i)\right]^{k_i} \\
					&= \left(\sum_{i=1}^r n_i\dlog{1}(f_i)c_z^*(f_i)\right)^k.\qedhere
				\end{align*}
			\end{proof}
		
			\section{Algebraic independence results}
			\label{s: proof of alg ind results}
			In this section we prove Theorem \ref{t: algebraic independence result} and Theorem \ref{t: automata result}. 
			\subsection{Iterative differential Galois theory and transcendence}\label{ss: ID Galois theory and transcendence}
			We will make use of iterative differential ($\ID$-for short) Galois theory, as expounded on by Matzat and van der Put in \cite{Matzat-vanderPut-Iterative_diff_eqs} (in particular \S 3 and \S 4 in this article.) Let $K$ be the algebraic closure
			of $\F_p(x)$ in $\overline{\F}_p\LS{x}$. The higher derivatives $\ipartial{k}$ make $K$ an $\ID$-field. For any $f \in \F_p[x]$ we recursively define a sequence $A_{f,0},A_{f,1}, \dots \in K$ by
			\[A_{f,0}=1 ~\text{ and } A_{f,k} = -\sum_{i=0}^{k-1}A_{f,i}\dlog{k-i}(\AHp(f)).\]
			We then define an $\ID$-module $N_f= Ke_f$ over $K$ of rank one by the rule:
			\[\ipartial{k}(e_f) = A_{f,k}e_f. \]
			Note that $N_f$ trivializes over the ring $K\left (\AHp(f),\frac{1}{\AHp(f)}\right)$.
			For $f_1,\dots,f_r \in x\overline{F}_p\LS{x}$ we define the rank $r$ $\ID$-module:
			\begin{align*}
				N:= \bigoplus_{i=1}^r N_{f_1}.
			\end{align*}
			Then $N$ has a full system of solutions over the ring
			\begin{align*}
				R_0:= K\left (\AHp(f_1), \dots, \AHp(f_r),\frac{1}{\AHp(f_1)\dots\AHp(f_r)}\right).
			\end{align*}
			The Picard-Vessiot ring of $N$ is $R=R_0/I$, where $I$ is a maximal $\ID$-ideal (i.e. $I$ is a maximal element in the ordered set of ideals of $R_0$ satisfying $\ipartial{k}(I) \subset I$.) The $\ID$-Galois group $\mathscr{G}(N)$ is the group of $\ID$-automorphisms of $R$ that fix $K$. A key fact is that $\mathscr{G}(N)$ is reduced algebraic group over the field of constants $\overline{\F}_p$ and that $\Spec(R)$ is an $\mathscr{G}(N)$-torsor over $K$. In particular we have
			\[\dim_K (\fracfield{R_0}) \geq \dim_K(\fracfield{R})=\dim_{\overline{\F}_p}(\mathscr{G}(N)).\]
			Thus, it suffices to show $\mathscr{G}(N)$ has dimension $r$. 
			
			On the other hand, there is a Tannakian interpretation of the $\ID$-Galois group. Let $\ID_{K}$ denote the category of $\ID$-modules over $K$. Then $\ID_K$ is a $\overline{\F}_p$-linear tensor category. For any object $N$ in $\ID_K$ we let $[N] $ be the full subcategory of $\ID_{K}$ generated by tensor powers of $N$ and its dual. Then $[N]$ is again a $\overline{\F}_p$-linear tensor category and thus isomorphic to the category
			of representations for an affine algebraic group $\Gal(N)$ defined over $\overline{\F}_p$. Then by standard arguments (see \cite{vanderPut-Singer-Galois_theory_lin_diff_eqs} for the characteristic $0$ case and see \cite{Matzat-vanderPut-Iterative_diff_eqs} for our precise situation) we have
			\begin{align*}
				\mathscr{G}(N) \cong \Gal(N).
			\end{align*}
			Note that since $N_{f_i}$ has rank one we know $\Gal(N_{f_i})$ is a closed subgroup of  $\mathbb{G}_m$. In particular, we see that $\Gal(N)$ is a closed subgroup of $\mathbb{G}_m^r$. To prove that $\Gal(N)\cong \mathbb{G}_m^r$ it is enough to show that for any $n_1,\dots,n_r \in \Z$ that are not all zero, the $\ID$-module $N_{f_1}^{\otimes n_1} \otimes \dots \otimes N_{f_r}^{\otimes n_r}$ is nontrivial. This amounts to proving $\AHp(f_1)^{n_1}\dots \AHp(f_r)^{n_r}$ is transcendental over $K$. Furthermore, we are easily reduced
			to the case where not all the $n_i$'s are divisible by $p$. 
			
			\subsection{Proof of Theorem \ref{t: algebraic independence result}}
			Let $f_1,\dots,f_r$ satisfy one of the two conditions from Theorem \ref{t: algebraic independence result}. From the discussion in \S \ref{ss: ID Galois theory and transcendence}
			we must show $\AHp(f_1)^{n_1}\dots \AHp(f_r)^{n_r}$ is transcendental, where we may assume that not all the $n_i$'s are divisible by $p$. 
			From Corollary \ref{c: diff criterion using depths} and Proposition \ref{p: estimate of dlog for products} we are reduced to showing that for some
			$z \in X$ we have
			\begin{align}\label{eq: thing we want to be nonzero}
				 \sum_{i=1}^r n_i c_z^{*}(f_i)\dlog{1}(f_i) &\neq 0.
			\end{align}

			\paragraph{The case where the $f_i^{*}$'s are not all zero and no non-trivial power product of the $f_i's$ are contained in $\F_p(x)^p$:}
			We have 
			\begin{align*}
				\sum_{i=1}^r n_i c_z^{*}(f_i)\dlog{1}(f_i) & = \dlog{1}\left[\prod_{i=1}^{r} f_i^{n_i c_z^{*}(f_i)} \right].
			\end{align*}
			By our assumption we know the product in the $\dlog{1}$ is not in $\F_p(x)^p$. The theorem follows by observing that
			$\ker(\dlog{1}) = (\F_p(x)^p)^\times$. 
			
			\paragraph{The case where $f_1^{*}, \dots, f_r^{*}$ are linearly independent:} 
			Without loss of generality assume $p \nmid n_1$. 
			We may also assume that none of the $f_i$'s are $p$-th powers. Let $g\in \F_p[x]$ be an irreducible polynomial dividing $f_1$ and let $v_g$ denote the valuation associated to $g$. Write $f_i=g^{b_i}h_i$, where $g \nmid h_i$.
			We can find $g$ so that $p\nmid b_1$, as otherwise $f_1$ would be a $p$-th power.
			Since $f_1^{*}, \dots, f_r^{*}$ are linearly dependent over $\F_p$,
			we know
			\begin{align*}
				n_1 b_1f_1^{*} + \dots + n_r b_rf_r^{*} &\neq 0.
			\end{align*}
			In particular, there exists $z$ coprime to $p$ such that
			\begin{align*}
				D_z:=n_1 b_1c_z^{*}(f_1)+ \dots + n_r b_r c_z^{*}(f_r)&\neq 0.
			\end{align*}
			The we have
			\begin{align*}
				\sum_{i=1}^r n_i c_z^{*}(f_i)\dlog{1}(f_i) &=D_z \dlog{1}(g) +\sum_{i=1}^r n_i c_z^{*}(f_i)\dlog{1}(h_i).
			\end{align*}
			If $z$ is a root of $g$, we see that $D_z \dlog{1}(g)$ has a simple pole at $z$, while $\dlog{1}(h_i)$ is regular at $z$.
			In particular, the right side of this sum has a simple pole at $z$ and thus is nonzero. 
			
			\subsection{Proof of Theorem \ref{t: automata result}}
			We now prove Theorem \ref{t: automata result}. 
			Let $\xi$ be a primitive $p-1$-th root of unity. Set $r=\phi(p-1)$. First, we show that
			$\AHp(x), \dots, \AHp(\xi^{r-1}x)$ are algebraically independent. From the 
			 $\ID$-Galois group argument from \S \ref{ss: ID Galois theory and transcendence} it suffices to show \[A(x):=\prod_{i=0}^{r-1} \AHp(\xi^{i}x)^{n_i}\]
			is transcendental, where $n_0, \dots, n_{r-1}$ are integers not all divisible by $p$. Indeed, this implies that the $\ID$-Galois group of the $\ID$-module $N_{x} \oplus N_{\xi x} \oplus \dots \oplus N_{\xi^{r-1}x}$ is all of $\G_m^r$. 
			From Lemma \ref{l: functional equation} we have
			\begin{align*}
				A(x) &= \AHp(x)^{n_0 + n_1\xi + \dots + n_{r-1}\xi^{r-1}}.
			\end{align*}
			As the $1, \dots, \xi^{r-1}$ are independent over $\Z$, we have $A(x)=\AHp(x)^n$, where $n$ is a nonzero $p$-adic number. Thus, it suffices to show $\AHp(x)^n$ is transcendental for any nonzero $p$-adic number with $p\nmid n$. Then from Lemma \ref{l: partials of powers of AH} we see that $\dlog{k}(\AHp(x)^n)$ has depth $k$. It follows from
			Corollary \ref{c: diff criterion using depths} that $\AHp(x)^n$ is transcendental over $\F_p(x)$. Thus, we have proven
			\begin{align*}
				\dim_{\F_p(x)} \F_p(x)(\AHp(\xi^i x))_{0\leq i \leq r-1} &= r.
			\end{align*}
			For $j>r-1$, we can write $\xi^j=c_0 + c_1 \xi + \dots + c_{r-1}\xi^{r-1}$. Then Lemma \ref{l: functional equation} gives 
			\begin{align*}
				\AHp(\xi^j x) &= \prod_{i=0}^{r-1} \AHp(\xi^i x)^{c_i},
			\end{align*}
			so that 
			\begin{align*}
			\dim_{\F_p(x)} \F_p(x)(\AHp(\xi^i x))_{0\leq i \leq p-2} &= r.
			\end{align*}
			\bibliographystyle{plain}
		\bibliography{bibliography.bib}

\begin{thebibliography}{10}

\bibitem{Adolphson-Sperber-exponential_sums}
Alan Adolphson and Steven Sperber.
\newblock Exponential sums and {N}ewton polyhedra: cohomology and estimates.
\newblock {\em Ann. of Math. (2)}, 130(2):367--406, 1989.

\bibitem{Allouche-MendesFrance-vanderPoorten}
Jean-Paul Allouche, M.~Mend\`es~France, and A.~J. van~der Poorten.
\newblock Ind\'{e}pendance alg\'{e}brique de certaines s\'{e}ries formelles.
\newblock {\em Bull. Soc. Math. France}, 116(4):449--454, 1988.

\bibitem{Allouche-Shallit-Automatic_sequences}
Jean-Paul Allouche and Jeffrey Shallit.
\newblock {\em Automatic sequences}.
\newblock Cambridge University Press, Cambridge, 2003.
\newblock Theory, applications, generalizations.

\bibitem{Ax-schanuels_conjecture}
James Ax.
\newblock On {S}chanuel's conjectures.
\newblock {\em Ann. of Math. (2)}, 93:252--268, 1971.

\bibitem{Christol-Kamae-France-Rauzy}
G.~Christol, T.~Kamae, M.~Mend\`es~France, and G.~Rauzy.
\newblock Suites alg\'{e}briques, automates et substitutions.
\newblock {\em Bull. Soc. Math. France}, 108(4):401--419, 1980.

\bibitem{Christol}
Gilles Christol.
\newblock Ensembles presque periodiques {$k$}-reconnaissables.
\newblock {\em Theoret. Comput. Sci.}, 9(1):141--145, 1979.

\bibitem{Dieudonne-ArtinHasse_exponential}
Jean Dieudonn\'{e}.
\newblock On the {A}rtin-{H}asse exponential series.
\newblock {\em Proc. Amer. Math. Soc.}, 8:210--214, 1957.

\bibitem{Matzat-vanderPut-Iterative_diff_eqs}
B.~Heinrich Matzat and Marius van~der Put.
\newblock Iterative differential equations and the {A}bhyankar conjecture.
\newblock {\em J. Reine Angew. Math.}, 557:1--52, 2003.

\bibitem{Thakur-automatic_methods_in_transcendence}
Dinesh~S. Thakur.
\newblock Automata methods in transcendence.
\newblock In {\em {$t$}-motives: {H}odge structures, transcendence and other
  motivic aspects}, EMS Ser. Congr. Rep., pages 351--372. EMS Publ. House,
  Berlin, [2020] \copyright 2020.

\bibitem{vanderPut-Singer-Galois_theory_lin_diff_eqs}
Marius van~der Put and Michael~F. Singer.
\newblock {\em Galois theory of linear differential equations}, volume 328 of
  {\em Grundlehren der mathematischen Wissenschaften [Fundamental Principles of
  Mathematical Sciences]}.
\newblock Springer-Verlag, Berlin, 2003.

\bibitem{Wan-NP_zeta_functions}
Daqing Wan.
\newblock Newton polygons of zeta functions and {$L$} functions.
\newblock {\em Ann. of Math. (2)}, 137(2):249--293, 1993.

\end{thebibliography}
\end{document}